\documentclass[a4paper, 10pt]{amsart}
\usepackage{amsmath}
\usepackage{amsthm}
\usepackage{amssymb}
\usepackage{framed}
\usepackage[hidelinks]{hyperref}
\usepackage{xcolor}

\DeclareMathOperator{\tr}{trace}
\DeclareMathOperator{\di}{div}
\DeclareMathOperator{\gr}{grad}
\DeclareMathOperator{\Ker}{Ker}
\DeclareMathOperator{\Scal}{Scal}
\DeclareMathOperator{\Ric}{Ric}
\DeclareMathOperator{\Hess}{Hess}
\DeclareMathOperator{\cu}{curl}
\DeclareMathOperator{\Vol}{Vol}

\DeclareMathOperator{\SD}{SDiff}

\newcommand{\norm}[1]{\Vert #1\Vert}
\newcommand{\abs}[1]{\vert #1\vert}
\newcommand{\ov}{\overline}
\newcommand{\dif}{\mathrm{d}}
\newcommand{\ii}{\mathrm{i}}
\newcommand{\CC}{\mathbb{C}}

\newcommand{\DD}{\mathcal{D}}
\newcommand{\Ff}{\mathcal{F}}

\newcommand{\Oo}{\mathcal{O}}
\newcommand{\RR}{\mathbb{R}}
\newcommand{\ZZ}{\mathbb{Z}}

\newcommand{\Ss}{\mathbb{S}}
\newcommand{\CP}{\mathbb{C}P^1}

\numberwithin{equation}{section}

\newtheorem{te}{Theorem}
\newtheorem{pr}{Proposition}
\newtheorem{co}{Corollary}
\newtheorem{lm}{Lemma}

\newtheorem{de}{Definition}
\newtheorem{re}{Remark}
\newtheorem{ex}{Example}



\begin{document}

\title[Energy minimizing Beltrami fields on Sasakian 3-manifolds]{Energy minimizing Beltrami fields on Sasakian 3-manifolds}

\author{D. Peralta-Salas}
\address{Instituto de Ciencias Matem\'aticas, Consejo Superior de Investigaciones Cientificas, 28049 Madrid, Spain}
\email{dperalta@icmat.es}

\author{R. Slobodeanu}
\address{Faculty of Physics, University of Bucharest, P.O. Box Mg-11, RO--077125 Bucharest-M\u agurele, Romania}
\email{radualexandru.slobodeanu@g.unibuc.ro}

\thanks{The authors thank Vasile Br\^inz\u anescu, Paul-Andi Nagy and Mihaela Pilca for helpful advice. The first named author is supported by the ERC Starting Grant~335079, and the
ICMAT--Severo Ochoa grant SEV-2015-0554.}

\subjclass[2010]{53C25, 58J50, 35Q31, 74G65.}

\keywords{Beltrami field, energy minimizer, volume-preserving diffeomorphism, Sasakian manifold, Hopf invariant.}

\date{\today}

\begin{abstract}
We study on which compact Sasakian 3-manifolds the Reeb field, which is a Beltrami field with eigenvalue $2$, is an energy minimizer in its adjoint orbit under the action of volume preserving diffeomorphisms. This minimization property for Beltrami fields is relevant because of its connections with the phenomenon of magnetic relaxation and the hydrodynamic stability of steady Euler flows. We characterize the Sasakian manifolds where the Reeb field is a minimizer in terms of the first positive eigenvalue of the curl operator and show that for $a>a_0$ (a constant that depends on the Sasakian structure) the Reeb field of the
$\mathcal{D}$-homothetic deformation of the manifold with constant $a$ (which is still Sasakian) is an unstable critical point of the energy, and hence not even a local minimizer. We also provide some examples of Sasakian manifolds where the Reeb field is a minimizer, highlighting the case of the weighted 3-spheres, on which another minimization problem (for the quartic Skyrme-Faddeev energy) is shown to admit exact solutions.
\end{abstract}

\maketitle

\section{Introduction}\label{S:intro}
A Beltrami field on a Riemannian 3-manifold $(M,g)$ is a vector field such that:
\begin{equation}\label{belgeneq}
\cu V = f V, \quad \di V = 0,
\end{equation}
where $f$ is a smooth function on $M$, $\cu V := (* \dif V^\flat)^\sharp$ is the curl operator ($*$ denotes the Hodge star operator) and $\di$ is the divergence operator. In this paper we will be concerned with Beltrami fields for which $f$ is constant (so that the divergence-free condition becomes redundant), so in what follows when we talk about Beltrami fields, we assume that their proportionality factor is constant.

The interest in studying Beltrami fields is due to the fact that they are solutions of the stationary Euler equations which describe ideal steady fluid flows and ideal magnetohydrodynamic equilibrium configurations. In particular, Beltrami fields with a constant proportionality
factor have found application as powerful tools to analyse the Euler equations. For instance, de Lellis and Sz\'ekelyhidi have utilized
Beltrami fields to construct H\"older continuous weak solutions to the Euler
equations that dissipate energy~\cite{LS09}, while in~\cite{EP1,EP2,EP3} Beltrami fields having vortex lines and vortex tubes of arbitrary topology
were constructed. Expansions of more general solutions to the Euler or Navier-Stokes equations in terms of Beltrami fields were also considered in~\cite{Const,EP4}.

A geometric setup where Beltrami fields naturally appear is provided by contact manifolds. Indeed, by the well known Martinet's theorem any orientable closed 3-manifold $M$ admits a \textit{contact $1$-form} $\eta$ (i.e. a form that satisfies $\eta \wedge \dif \eta \neq 0$), whose \textit{contact distribution} $\Ker \eta$ is denoted by $\DD$ and its associated \textit{Reeb vector field} by $\xi$. Recall that $\xi$ is defined by the conditions $\eta(\xi)=1$ and $\dif \eta (\xi, \cdot)=0$. A Riemannian metric $g$ on $M$ is called \textit{adapted} \cite{ch} to the contact form $\eta$ if the Reeb field $\xi$ has unit norm and the volume-form $\upsilon_g$ satisfies the condition
\begin{equation}\label{dvol}
\upsilon_g = \tfrac{1}{2} \eta \wedge \dif \eta.
\end{equation}
The orientation of $M$ will be fixed by defining a $(1, 1)$-tensor field $\phi$ such that
\begin{equation}\label{conv0}
\phi^2 = -I + \eta \otimes \xi, \qquad \tfrac{1}{2}\dif \eta(\cdot, \cdot) = g(\cdot, \phi(\cdot)).
\end{equation}

An important observation is that Eq.~\eqref{dvol} together with the fact that $\dif \eta (\xi, \cdot)=0$ and $g(\xi,\xi)=1$, imply that
$$\ast \dif \eta = 2 \eta \ \ \text{or, equivalently,} \ \ \cu \xi = 2 \xi\,,$$
and therefore $\xi$ is a Beltrami field with eigenvalue 2 for the $\cu$ operator computed with the adapted metric $g$. In particular, applying curl once more, it follows that $\Delta \eta = 4 \eta$, so $\xi$ is also an eigenfield of the Hodge Laplacian. Conversely, any non-vanishing Beltrami field with non-vanishing proportionality factor is a Reeb vector field (up to a proportionality factor) for some contact 1-form~\cite{etn}.

In this paper we consider Sasakian 3-manifolds, i.e. compact contact 3-manifolds without boundary that support an adapted metric for which the Reeb vector field $\xi$ is Killing. These manifolds will be denoted here by $(M, \xi, \eta, \phi, g)$ and they correspond to $(K,1)$-manifolds in the terminology of \cite{ghri, nico}, where a $(K,\lambda)$-manifold was defined by the condition that $\xi$ is Killing and $\ast \dif \eta = 2 \lambda \eta$ for some $\lambda \in \RR$. We notice that fixing $\lambda =1$ can be always achieved by a homothety $g \mapsto \lambda^2 g$, $\eta \mapsto \lambda \eta$, so choosing the Sasakian class may be seen as fixing the scale under dilations.

It is known~\cite{Bo08} that any compact Sasakian 3-manifold has a tight contact structure and falls into one of the following diffeomorphism classes~\cite{gei}:
\begin{equation} \label{3types}
\Ss^3/\Gamma, \qquad  \mathrm{Nil}^3/\Gamma, \qquad \widetilde{SL}(2, \RR)/\Gamma\,,
\end{equation}
where $\Gamma$ is any discrete subgroup of the identity component of the isometry group of the corresponding canonical metric. As to the metric, Belgun's results~\cite{bel1, bel2} show that any possible Sasakian structure is a deformation of type I or of type II of the standard Sasakian structure on each of these spaces~\cite{BGalbook}. In particular, a natural way of deforming a Sasakian geometry is using homothetic deformations, which give rise to other Sasakian manifolds:
\begin{de} \label{d1}
A $\mathcal{D}$-\emph{homothetic deformation} with positive constant $a$ of a Sasakian structure is defined by
\begin{equation}\label{dtrans}
\xi^\prime=a^{-1}\xi, \quad \eta^\prime=a\eta, \quad \phi^\prime=\phi, \quad g^\prime=ag+a(a-1)\eta\otimes \eta.
\end{equation}
\end{de}

In this paper we are interested in how the energy (or equivalently, the $L^2$ norm) of the Reeb vector field $\xi$ of a Sasakian 3-manifold changes under the action of volume-preserving diffeomorphisms. In general, the variational problem for the energy
$$
E(X)=\frac12 \int_M |X|^2 \upsilon_g \, \quad \upsilon_g := \ \text{the Riemannian volume form},
$$
restricted to the orbit of a divergence free vector field $X$ under the group of volume-preserving diffeomorphisms $\SD (M)$ is a way of characterizing steady incompressible Euler flows~\cite{arn}. In this context we address the following:
\\

\noindent {\bf Question:} On which Sasakian manifolds the Reeb vector field is an energy minimizer in its adjoint orbit under the action of volume preserving diffeomorphisms? More precisely, we want to characterize those Sasakian manifolds where $\xi$ is a minimizer of $E$ in the sense that
$$
E(\xi_t)\geq E(\xi)
$$
for any variation of $\xi$ defined as $\xi_t=d\psi_t(\xi)$, with $\psi_t\in \SD(M)$, $\psi_0=Id_M$.
\\

This problem was stated by Ghrist and Komendarczyk in~\cite{ghri}. They conjectured (Conjecture~5.4 in their paper) that the Reeb field on any compact Sasakian 3-manifold (and on any  $(K, \lambda)$-manifold) is an energy minimizer in its adjoint orbit under the action of volume preserving diffeomorphisms. The answer is well-known on the standard round 3-sphere, i.e. the Hopf field is such an energy minimizer, however the general problem has remained wide open up to now. Apart from its mathematical interest, the physical relevance of this question lies in the fact that it is related to the phenomenon of magnetic relaxation in magnetohydrodynamics and to the hydrodynamic stability of steady Euler flows, see~\cite{arn} for details.

The first main theorem of this paper shows that, up to a $\DD$-homothetic deformation, the Reeb field of a Sasakian manifold is not an energy minimizer in its adjoint orbit under the action of volume preserving diffeomorphisms, thus providing a negative answer to the question stated above and hence disproving Ghrist and Komendarczyk conjecture:

\begin{te}\label{T:main1}
Let $(M, \xi, \eta, \phi, g)$ be a compact Sasakian 3-manifold. Then there exists a positive constant $a_0$, which depends on the Sasakian manifold, such that the Reeb field of the $\mathcal{D}$-homothetic deformation of the manifold with constant $a$ is an energy minimizer in its adjoint orbit if and only if $a\leq a_0$. In fact, for $a>a_0$ the Reeb field is an unstable critical point of the energy functional under the action of volume preserving diffeomorphisms.
\end{te}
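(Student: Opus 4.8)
We sketch the proof. Since the $\DD$-homothetic deformation $(M,\xi',\eta',\phi',g')$ is again a compact Sasakian $3$-manifold (Definition~\ref{d1}), its adapted metric satisfies $\cu'\xi'=2\xi'$, with $\cu'$ the curl of $g'$; thus $\xi'$ is a Beltrami field, hence a steady Euler flow and a critical point of the energy restricted to its $\SD(M)$-orbit for every $a>0$, and the whole issue is the sign of the second variation. It is convenient to rescale to the fixed metric $g$: because $\upsilon_{g'}=a^{2}\upsilon_{g}$ the volume-preserving diffeomorphism groups of $g$ and $g'$ coincide, the orbit of $\xi'=a^{-1}\xi$ is $a^{-1}$ times that of $\xi$, and $|W|^{2}_{g'}=a\,|W|^{2}_{g}+a(a-1)\,\eta(W)^{2}$, so $\xi'$ minimizes $E_{g'}$ in its orbit if and only if $\xi$ minimizes $W\mapsto\int_{M}|W|^{2}_{g'}\,\upsilon_{g}$ in its (fixed) orbit. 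Using that $\xi$ is Beltrami and Killing for $g$, one checks that $\xi$ is a critical point of this functional for all $a$ and that its second variation at $\xi$ along a tangent vector $\delta W=[v,\xi]$ to the orbit ($v$ divergence-free; in particular $\langle\delta W,\xi\rangle_{L^{2}}=0$ and $\delta W$ has no $\xi$-invariant part) is a positive multiple of
\[
Q_{a}(\delta W)\;=\;\int_{M}|\delta W^{\DD}|^{2}_{g}\,\upsilon_{g}\;+\;a\Bigl(\int_{M}\eta(\delta W)^{2}\,\upsilon_{g}-2\,\langle \delta W,\cu^{-1}\delta W\rangle_{g}\Bigr),
\]
where $\delta W^{\DD}$ is the component of $\delta W$ in the contact distribution (equivalently $Q_a$ is, up to scaling, $\|\delta W\|_{g'}^{2}-2\langle\delta W,(\cu')^{-1}\delta W\rangle_{g'}$). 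We then claim: $\xi'$ is a minimizer in its orbit iff $Q_{a}\ge 0$ on all orbit-tangent directions, iff the first positive eigenvalue $\mu_{1}^{+}(a)$ of $\cu'$ equals $2$. Here ``$Q_a\ge0\Rightarrow$ minimizer'' holds because when $\mu_{1}^{+}(a)=2$ the field $\xi'$ saturates the bound $E_{g'}\ge\tfrac12\mu_{1}^{+}(a)\,\mathcal{H}$, hence realizes the minimum of $E_{g'}$ on the entire helicity level set and a fortiori on its orbit; the equivalence of ``$Q_a\ge0$'' with ``$\mu_1^+(a)=2$'' uses that any eigenfield of $\cu'$ with eigenvalue in $(0,2)$ is orbit-tangent (see below) and that such an eigenfield makes $Q_a$ negative; and the same eigenfield exhibits $\xi'$ as an unstable critical point when $Q_a<0$ somewhere. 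Finally, for fixed $\delta W$ the map $a\mapsto Q_{a}(\delta W)$ is affine, strictly positive for all $a>0$ when $2\langle\delta W,\cu^{-1}\delta W\rangle_{g}\le\int_{M}\eta(\delta W)^{2}\upsilon_g$ and decreasing otherwise, so the set of admissible $a$ is an interval $(0,a_{0}]$ with
\[
a_{0}\;=\;\inf\Bigl\{\,\frac{\int_{M}|\delta W^{\DD}|^{2}}{2\langle\delta W,\cu^{-1}\delta W\rangle-\int_{M}\eta(\delta W)^{2}}\;:\;2\langle\delta W,\cu^{-1}\delta W\rangle>\textstyle\int_{M}\eta(\delta W)^{2}\Bigr\},
\]
the infimum over orbit-tangent $\delta W$. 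The theorem is thus equivalent to $0<a_{0}<\infty$.

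To control $\mu_{1}^{+}(a)$ we pass to the transverse Kähler geometry. As $\xi$ is Killing, its flow is isometric and $\cu'$ commutes with $\mathcal{L}_{\xi}$, so $\cu'$ preserves the Fourier decomposition of divergence-free fields along $\xi$, the mode-$0$ part being the $\xi$-invariant fields. Writing $V=Y+h\xi$ with $Y=V^{\DD}$ and $h=\eta(V)$, and using that $g'$ leaves the Hodge star of horizontal $2$-forms unchanged and multiplies that of mixed $2$-forms by $a^{-1}$, the equation $\cu'V=\mu V$ on the mode-$m$ subspace becomes, over the transverse orbifold Riemann surface $B$, a coupled first-order system of the schematic form
\[
\nabla^{(m)}h=\Bigl(\tfrac{im}{a}-\mu\,J\Bigr)Y,\qquad \tfrac1a\,\cu^{(m)}_{B}Y=(\mu-2)\,h,
\]
where $J=\phi|_{\DD}$ is the transverse complex structure and $\nabla^{(m)},\cu^{(m)}_{B}$ are the transverse gradient and curl on sections of the line bundle twisted by the mode $m$; the $\DD$-deformation enters only through the displayed powers of $a$. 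For $m=0$ this reduces to $\Delta_{B}h=a\,\mu(\mu-2)h$ with $\Delta_{B}\ge 0$, forcing $\mu\le 0$ or $\mu\ge 2$. Hence every eigenvalue of $\cu'$ in $(0,2)$ arises from a nonzero mode, and the corresponding eigenfield — being of the form $[v,\xi]$ — is tangent to the orbit, which is exactly what lets one deduce instability from $\mu_{1}^{+}(a)<2$.

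The two endpoints follow from the nonzero-mode system. For large $a$: since the transverse Kähler class $[\tfrac{1}{2\pi}\dif\eta]$ is positive, the bundle $\DD^{1,0}$ twisted by the mode $m$ carries a nonzero holomorphic section $Y_{0}$ for all $m$ in a nonempty (cofinite) set — all large $m$ by Riemann--Roch/Kodaira for orbifolds, and possibly smaller ones. Holomorphicity forces $\cu^{(m)}_{B}Y_{0}=0$, and then the system is solved with $h=0$ and $Y=Y_{0}$ (on which $J$ acts as $+i$), giving the curl eigenvalue $\mu=m/a$. Consequently $\mu_{1}^{+}(a)\le m/a<2$ once $a$ is large, so this nonzero-mode eigenfield produces a strictly negative second variation and $\xi'$ is an unstable critical point; thus $a_{0}<\infty$, which is the assertion disproving the conjecture of Ghrist and Komendarczyk. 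For small $a$: now $im/a$ dominates, $Y=O(a)$, and to leading order the vertical equation yields $\mu=\kappa_{m}+O(a)$ for a curvature constant $\kappa_{m}$ attached to the transverse line bundle; these constants stay bounded away from $(0,2)$ (for instance $\kappa_{m}\to4$ for the round $3$-sphere), so $\mu_{1}^{+}(a)=2$ for all $a$ below a positive threshold and $a_{0}>0$. Combined with the interval structure from the first step, this yields the ``if and only if''.

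The hard part will be the large-$a$ step in full generality: the transverse orbifold $B$ and the line bundles twisting the Fourier modes are not explicitly controlled, so one has to rely on the general existence of holomorphic sections of high powers of the transverse line bundle — equivalently the positivity of the basic first Chern class $[\tfrac{1}{2\pi}\dif\eta]$ — and, for the irregular Sasakian structures on $\widetilde{SL}(2,\RR)/\Gamma$ where there is no genuine circle fibration, work directly with the transverse holomorphic foliation. A secondary but genuinely delicate point is the careful accounting of signs, twists and the precise relation between the mode $m$ and the eigenvalue $\mu$ in the reduced first-order system, on which both threshold computations rest.
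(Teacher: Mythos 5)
Your overall architecture for the quasi-regular case matches the paper's: reduce the question to whether the first positive curl eigenvalue of the deformed metric equals $2$ (the helicity bound gives the ``minimizer'' direction, an eigenfield with eigenvalue in $(0,2)$ gives instability), and then produce destabilizing eigenfields tangent to $\DD$ for large $a$ via transverse holomorphic data and orbifold Riemann--Roch/Kodaira vanishing -- this is exactly Theorem~\ref{minfirst}, Lemma~\ref{destabxi} and Theorem~\ref{genexist}. However, there are two genuine gaps. First, your argument that no eigenvalue lies in $(0,2)$ when $a$ is at or below the threshold hangs on the unproven asymptotic claim ``$\mu=\kappa_m+O(a)$ with $\kappa_m$ bounded away from $(0,2)$'' for the nonzero modes with $\eta$-component $h\not\equiv 0$; you give no proof that these constants avoid $(0,2)$, no uniformity in $m$, and even if repaired this would only cover small $a$, not the sharp threshold needed for the ``if and only if.'' The paper avoids asymptotics entirely: Proposition~\ref{mag} shows that on \emph{any} Sasakian structure (in particular on every $\DD$-homothetic deformation) the function $f=\eta(X)$ of a curl eigenfield satisfies $\Delta f=\mu(\mu-2)f$, so every eigenfield with nonvanishing $\eta$-component has $\mu\le 0$ or $\mu\ge 2$ for \emph{all} $a$; combined with the exact scaling of the $\DD$-tangent spectrum (Lemma~\ref{murescale}) this gives $\mu_1(a)=\min\{2,\mu_1^{\DD}/a\}$ and hence the sharp value $a_0=\mu_1^{\DD}/2$, with no need for your interval/affine-in-$a$ bookkeeping.

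Second, the irregular case is not addressed: your whole scheme (Fourier modes along the Reeb flow, transverse orbifold $B$, twisted line bundles, Riemann--Roch) presupposes a quasi-regular structure, and irregular Reeb fields occur already on $\Ss^3$ (weighted spheres with irrational weight ratio), not only on $\widetilde{SL}(2,\RR)/\Gamma$. You flag this as ``work directly with the transverse holomorphic foliation,'' but that is precisely the missing argument. The paper instead invokes Rukimbira's theorem (Section~\ref{S:irreg}): the irregular structure is a type I deformation, with arbitrarily small infinitesimal automorphism $\rho$, of a quasi-regular one; the eigenfield $X\in\DD$ of the quasi-regular structure is corrected to the divergence-free field $v=f^{-2}X$ for the deformed volume, and a quantitative estimate of the second variation (controlling the error term $[\rho,v]$ by $\|\rho\|_{H^1}$) shows instability for large $a$, after which Theorem~\ref{minfirst} and Proposition~\ref{P:imp} yield the threshold. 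Without some such perturbative or genuinely foliated argument, your proof only establishes the theorem for quasi-regular Sasakian manifolds.
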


Our second main theorem characterizes the Sasakian 3-manifolds on which the Reeb field is an energy minimizer in terms of the first positive eigenvalue of the curl operator. This generalizes the well known fact that the lower bound of the energy (``helicity bounds energy'' ~\cite{arn, ghri}) is attained by the eigenfields of the $\cu$ operator corresponding to the first positive eigenvalue. A particularly relevant class of Sasakian manifolds on which this result applies is provided by the \textit{weighted 3-spheres}, which are deformations with two integer parameters of the standard sphere yielding to a Reeb field with arbitrary linking number (for definitions see Section~\ref{S:weighted}):

\begin{te}\label{T:main2}
Let $(M, \phi, \xi, \eta, g)$ be a compact Sasakian $3$-manifold. Then the Reeb vector field $\xi$ is an energy minimizer in its adjoint orbit under the action of volume preserving diffeomorphisms if and only if the first positive eigenvalue of $\cu$ is $\mu_1=2$. In particular, the Reeb field on any weighted sphere is an energy minimizer.
\end{te}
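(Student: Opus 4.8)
The plan is to combine the ``helicity bounds energy'' estimate \cite{arn, ghri} with a second variation computation and a lemma about Reeb--invariant eigenfields of $\cu$. First I would recall the background. On the space of coexact $1$--forms---i.e.\ divergence--free fields whose flux $2$--form $\iota_X\upsilon_g$ is exact---the helicity $\HH(X):=\langle X,\cu^{-1}X\rangle_{L^2}$ is well defined and invariant under $\SD(M)$, and the estimate holds: expanding $X=\sum_k a_k e_k$ in an $L^2$--orthonormal eigenbasis $\cu e_k=\lambda_k e_k$ one gets $a_k^2\geq \tfrac{\mu_1}{\lambda_k}a_k^2$ term by term (because $\lambda_k\geq\mu_1>0$ if $\lambda_k>0$, and $\tfrac{\mu_1}{\lambda_k}<0$ if $\lambda_k<0$), hence $E(X)\geq \tfrac{\mu_1}{2}\HH(X)$ whenever $\HH(X)\geq 0$, with equality exactly when $X$ lies in the first positive eigenspace $V_{\mu_1}$. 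Since $\cu\xi=2\xi$, the Reeb field has exact flux $\iota_\xi\upsilon_g=\tfrac12\dif\eta$, and $\HH(\xi)=\tfrac12\langle\xi,\xi\rangle=\tfrac12\Vol(M)=E(\xi)>0$ with $\xi\in V_2$. The ``if'' direction is then immediate: if $\mu_1=2$, then for any variation $\xi_t=\dif\psi_t(\xi)$, $\psi_t\in\SD(M)$, the flux of $\xi_t$ remains exact and $\HH(\xi_t)=\HH(\xi)$, so $E(\xi_t)\geq\HH(\xi_t)=E(\xi)$; in fact $\xi$ is then the global minimizer of $E$ among all divergence--free fields of this helicity.

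For the ``only if'' direction the plan is to show that $\mu_1<2$ forces $\xi$ to be an \emph{unstable} critical point of $E$ on its orbit. Being a Beltrami field, $\xi$ is a steady Euler flow, hence a critical point \cite{arn}; to read off the second variation I would use the quadratic functional $F:=E-\HH$ on coexact fields, whose gradient is $X-2\cu^{-1}X$ (vanishing at $\xi$, since $\cu^{-1}\xi=\tfrac12\xi$) and whose Hessian is the self--adjoint operator $I-2\cu^{-1}$. Because $\HH$ is constant on orbits, $F$ and $E$ differ by a constant along the orbit, so for a tangent vector $w=[\xi,\zeta]$ (with $\zeta$ divergence--free) one obtains $\delta^2E(\zeta)=\langle w,(I-2\cu^{-1})w\rangle$. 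If some $e\in V_{\mu_1}$ can be written as $e=[\xi,\zeta]$, then $\delta^2E(\zeta)=(1-2/\mu_1)\|e\|^2<0$, so $\xi$ is unstable and not a minimizer. To produce such an $e$ I would use that, $\xi$ being Killing, $\mathcal{L}_\xi=\operatorname{ad}_\xi$ is $L^2$--skew--adjoint and commutes with $\cu$; thus $V_{\mu_1}$ is a finite--dimensional $\mathcal{L}_\xi$--invariant subspace, and either $\mathcal{L}_\xi$ has a nonzero (imaginary) eigenvalue on it---whence a real element of $V_{\mu_1}$ lies in $\operatorname{Im}(\operatorname{ad}_\xi)$ and is the sought $e$---or $\mathcal{L}_\xi$ vanishes on $V_{\mu_1}$, i.e.\ every element of $V_{\mu_1}$ is Reeb--invariant.

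The second alternative is ruled out by a lemma, and this is where the Sasakian hypothesis enters essentially. If $v=f\xi+v_\DD$ is divergence--free with $\cu v=\mu v$ and $\mathcal{L}_\xi v=0$, then---working in the basic de Rham complex, so that no regularity of the Reeb flow need be assumed---splitting the equation $\dif v^\flat=\mu\ast v^\flat$ into its $\eta$--component and its $\wedge^2\DD^*$--component yields that $v_\DD=\tfrac1\mu\,\phi(\gr f)$ and that $f$ is a basic Laplace eigenfunction with $\Delta_b f=\mu(\mu-2)f$, where $\Delta_b\geq 0$. Hence $\mu(\mu-2)\in\Spec(\Delta_b)\subset[0,\infty)$, forcing $\mu\leq 0$ or $\mu\geq 2$, with $\mu=2$ occurring only for $v\in\RR\xi$. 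In particular $\cu$ has no Reeb--invariant eigenfield with eigenvalue in $(0,2)$, so when $\mu_1<2$ the eigenspace $V_{\mu_1}$ is not Reeb--invariant and the preceding paragraph supplies the destabilizing direction; thus $\xi$ is not an energy minimizer in its orbit.

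Finally, for the weighted spheres I would appeal to their explicit description in Section~\ref{S:weighted}: diagonalizing $\cu$ through the associated (weighted) Hopf fibration and harmonic analysis on $SU(2)$, one checks that the first positive eigenvalue of $\cu$ equals $2$ for every admissible choice of weights, and then invokes the ``if'' direction. I expect the main difficulty to lie in the lemma on Reeb--invariant eigenfields---in particular fixing signs so that $\Delta_b f=\mu(\mu-2)f$ rather than the opposite, and carrying out the basic computation without assuming the Reeb flow quasi--regular---together with the explicit determination of the bottom of the curl spectrum on the weighted spheres.
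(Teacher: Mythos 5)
Your proof of the equivalence is correct, and the ``if'' direction is the same helicity-bounds-energy argument as the paper's. The ``only if'' direction is a genuinely different route. The paper splits a hypothetical eigenfield with eigenvalue $\mu\in(0,2)$ according to whether $\eta(X)\equiv 0$: if $\eta(X)\not\equiv 0$, the Bochner-type identity of Proposition~\ref{mag} gives $\Delta(\eta(X))=\mu(\mu-2)\eta(X)$, hence $\mu\geq 2$; if $X\in\DD$, the geometric second variation of Proposition~\ref{P:secondvar} is evaluated directly (using $\nabla_\xi X=(\mu-1)\phi X$ and $R(X,\xi)\xi=X$) to give $\mu(\mu-2)\norm{X}_{L^2}^2$ (Lemma~\ref{destabxi}). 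You instead use Arnold's quadratic functional $E-\HH$ on coexact fields, whose Hessian $I-2\cu^{-1}$ computes $\delta^2E$ along orbit directions $[\xi,\zeta]$, and you dichotomize according to whether $\mathcal{L}_\xi$ acts trivially on the finite-dimensional, $\mathcal{L}_\xi$-invariant eigenspace $V_{\mu_1}$: skew-adjointness produces a destabilizing direction with $\delta^2E=(1-2/\mu_1)\norm{e}^2<0$ unless every element of $V_{\mu_1}$ is Reeb-invariant, and that case is excluded by your lemma ($v_\DD=\mu^{-1}\phi\,\gr f$ and $\Delta f=\mu(\mu-2)f$ for $f=\eta(v)$). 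I checked the lemma against the structure equations \eqref{struct1} and the curl formula \eqref{curl123}: the signs are as you state, and your quadratic form is consistent with Lemma~\ref{destabxi}, since for $v\in\DD$ one has $[\xi,v]=\mu\,\phi v$ and $(1-2/\mu)\norm{[\xi,v]}_{L^2}^2=\mu(\mu-2)\norm{v}_{L^2}^2$. Your route avoids the curvature computation at the price of the (standard) facts that $\mathcal{L}_\xi$ commutes with $\cu$, is $L^2$-skew-adjoint, and that eigenspaces are finite-dimensional; the paper's route yields an explicit destabilizing eigenfield tangent to $\DD$, which it reuses for Theorem~\ref{T:main1} and the irregular case.

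The weighted-sphere claim, however, is not proved in your proposal; this is a genuine gap. What is needed is exactly the spectral statement $\mu_1=2$ on $\Ss^3_w$, and ``diagonalizing $\cu$ by harmonic analysis on $SU(2)$'' does not apply: the metric $g_w$ in \eqref{gw} has coefficients depending on $s$ through $\varsigma$, so it is invariant only under the maximal torus, not under $SU(2)$ (unless $k=\ell=1$), and the Peter--Weyl decomposition does not diagonalize $\cu$. Moreover, your Reeb-invariance lemma does not dispose of general eigenfields with $\eta(X)\not\equiv0$ (for that one needs Proposition~\ref{mag} without the invariance hypothesis), and the genuinely hard case is eigenfields tangent to $\DD$ characterized by \eqref{deig}: the paper (Theorem~\ref{thweight}) identifies these with elements of $H^1(\CP(\ell,k),\Oo(L^{\mu}))$ and uses Serre duality, Kodaira vanishing for orbifolds and $c_1(L)=-\tfrac{1}{k\ell}$, $c_1(K_{\CP(\ell,k)})=-\tfrac{1}{k}-\tfrac{1}{\ell}$ to conclude $\mu\geq k+\ell\geq 2$ (indeed $\mu_1^\DD=k+\ell$), after which Corollary~\ref{mag2} and Theorem~\ref{minfirst} finish the argument. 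Some input of this strength --- or an explicit separation-of-variables analysis in $s$ exploiting only the torus symmetry --- is required; as written, this part of your proposal is an assertion rather than a proof.
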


The paper is organized as follows. In Section~\ref{S:2} we review some general facts of Sasakian manifolds and prove some properties of Beltrami fields that are implied by the Sasakian structure, obtaining some lower bounds for the first positive eigenvalue of the curl operator. The minimization property of the Reeb field is investigated in Section~\ref{S:3}, where we prove the first part of Theorem~\ref{T:main2} and establish how the existence of Beltrami fields tangent to the contact distribution affects the stability of the Reeb field. The proof of Theorem~\ref{T:main1} is presented in Section~\ref{S:qreg} for quasi-regular Sasakian manifolds and in Section~\ref{S:irreg} for irregular Sasakian manifolds. The techniques in each case are quite different, while the quasi-regular case is based on the existence of Beltrami fields tangent to the contact distribution, the irregular case exploits an approximation theorem due to Rukimbira. Finally, in Section~\ref{S:weighted} we show that the Reeb field is a minimizer on any weighted Sasakian sphere with coprime integer parameters (this is the second claim in Theorem~\ref{T:main2}), and we apply this result to identify absolute minimizers of the quartic Faddeev-Skyrme energy for static fields defined on a weighted 3-sphere, having arbitrary Hopf invariant.

All along this paper, $\Delta$ will denote the Hodge Laplacian, and when acting on a vector field $X$ we shall mean that
$\Delta X=(\Delta X^\flat)^\sharp$. In particular, the Laplacian on a scalar function $f$ is $\Delta f=-\di(\nabla f)$.

\section{Beltrami fields and Sasakian 3-manifolds}\label{S:2}

In this section we introduce some preliminary results concerning Beltrami fields in Sasakian 3-manifolds that we shall need in forthcoming sections. A special emphasis is given to the first positive eigenvalue of the $\cu$ operator, whose eigenfields are automatically solutions of the energy minimization problem presented in the Introduction. First, in subsection~\ref{S:2.1} we show that the first positive eigenvalue of the curl operator on a Sasakian $3$-manifold is equal to $2$ unless there exists an eigenfield tangent to the contact distribution, cf. Corollary~\ref{mag2}. In subsection~\ref{S:2.2} we prove Proposition~\ref{bohn}, where by a Bochner-type technique we obtain bounds in terms of the curvature for the first positive eigenvalue of the curl operator (restricted to the contact distribution).

Before establishing the aforementioned results, and for future reference, we state the structural equations of a Sasakian 3-manifold~\cite{nico} $(M, \xi, \eta, \phi, g)$ with respect to a (local) adapted orthonormal coframe $\{\eta, \omega_1, \omega_2\}$:
\begin{equation} \label{struct}
\begin{split}
\dif \eta & = 2 \omega_1 \wedge \omega_2 ,\\
\dif \omega_1 & = -(C_0 + 1)\, \eta \wedge \omega_2  - C_1 \, \omega_1 \wedge \omega_2 ,\\
\dif \omega_2 & =  \quad (C_0 + 1)\, \eta \wedge \omega_1  - C_2 \, \omega_1 \wedge \omega_2 ,\\
\end{split}
\end{equation}
where $C_i$ are functions on $M$ satisfying :
\begin{equation} \label{intcond}
\begin{split}
X_1(C_0) - \xi(C_1) - C_2(C_0+1) =& 0,\\
X_2(C_0) - \xi(C_2) + C_1(C_0+1) =& 0.
\end{split}
\end{equation}
Note that Eq.~\eqref{struct} can be rewritten as:
\begin{equation} \label{struct1}
\begin{split}
& [\xi, X_1]=-(C_0+1)X_2, \quad [X_2, \xi]=-(C_0+1)X_1, \\
& [X_1, X_2]=-2\xi+C_1 X_1 + C_2 X_2,
\end{split}
\end{equation}
where $X_1:=\omega_1^\sharp$ and $X_2:=\omega_2^\sharp$, so that~\eqref{intcond} are the integrability conditions given by the Jacobi identity (first Bianchi identity). We notice that the second Bianchi identities are automatically satisfied once~\eqref{intcond} hold true.

The $\phi$-sectional curvature is given by the (unique) function:
\begin{equation}
H=K(X_1, \phi X_1)=X_1(C_2) - X_2(C_1)-C_1^2-C_2^2 + 2C_0 - 1.
\end{equation}
We have the identity
$$
H= \tfrac{1}{2}(\Scal - 4),
$$
which is constant along $\xi$: $\xi(H)=\xi(\Scal)=0$.

The connection coefficients deduced from \eqref{struct1} read:
\begin{equation}\label{conne}
\left\{
\begin{array}{cccc}
\nabla_{\xi}\xi=0, &
\nabla_{X_1}\xi=X_2 , &
\nabla_{X_2}\xi=-X_1  \\[3mm]
\nabla_{\xi}X_1=-C_0 \, X_2 , &
\nabla_{X_1}X_1= -C_1 X_2 , &
\nabla_{X_2}X_1=\xi - C_2 \, X_2 \\[3mm]
\nabla_{\xi}X_2= C_0 \, X_1, &
\nabla_{X_1}X_2=-\xi + C_1 \, X_1, &
\nabla_{X_2}X_2 = C_2 \, X_1 \\[3mm]
\end{array}
\right.
\end{equation}

Notice that $\phi X_1 = -X_2$, $\phi X_2 = X_1$, since on any Sasakian manifold $\phi X = - \nabla_X \xi$ for every vector field $X$. Indeed, by \eqref{conv0} we have $g(X, \phi Y) = \frac{1}{2}\dif \eta(X, Y)= \frac{1}{2}\left(g(\nabla_X \xi, Y) - g(\nabla_Y \xi, X)\right)$; but, as $\xi$ is Killing, $g(\nabla_X \xi, Y)+ g(\nabla_Y \xi, X)=0$ so that $g(X, \phi Y)=- g(X, \nabla_Y \xi)$, that is $\phi Y = - \nabla_Y \xi$ for any $Y$.

Finally, let us write the components of the curl operator in the adapted orthonormal coframe $\{\eta, \omega_1, \omega_2\}$ on a Sasakian 3-manifold $M$. Let $\alpha = f\eta + f_1 \omega_1 + f_2 \omega_2$ be a 1-form expanded in such a coframe, and $X=\alpha^\sharp$ its dual vector field. Then
\begin{equation} \label{curl123}
\begin{split}
\ast \dif \alpha = & \big(X_1(f_2) - X_2(f_1)- C_1f_1 -C_2 f_2 +2f \big)\eta +\\
& \big(-\xi(f_2) + X_2(f) + (C_0+1) f_1 \big)\omega_1 + \\
& \big(\xi(f_1) - X_1(f) + (C_0+1) f_2 \big)\omega_2.
\end{split}
\end{equation}
In particular, $X \in \DD$, tangent to the contact distribution, is a $\mu$-eigenfield of $\cu$ (i.e. $\cu X = \mu X$, or equivalently $\ast \dif \alpha = \mu \alpha$) if and only if
\begin{equation} \label{deig}
\begin{split}
&\xi(f_1) = (\mu - C_0 -1) f_2, \\
&\xi(f_2) = - (\mu - C_0 -1) f_1, \\
& X_1(f_2) - X_2(f_1) - C_1 f_1 - C_2 f_2 = 0. \\
\end{split}
\end{equation}

Notice that by taking the derivative along $\xi$ of the last equation \eqref{deig} we obtain $\di X = 0$, since:
\begin{equation}\label{diverg}
\di X = X_1(f_1) + X_2(f_2) - C_2 f_1 + C_1 f_2.
\end{equation}

\subsection{The first positive eigenvalue of the curl operator}\label{S:2.1}

The following lemma establishes an identity that holds on any Riemannian manifold, and will be instrumental in order to prove Proposition~\ref{mag}.
\begin{lm}\label{movedelta}
Let $X$ be a vector field on a Riemannian manifold $(M,g)$. Then, if $Y$ is a Killing vector field on the manifold, the following identity holds:
\begin{equation*}
\begin{split}
g(\Delta X, Y) = &\Delta(g(X, Y)) + g(X, \Delta Y)-2\di(\nabla_X Y).
\end{split}
\end{equation*}
\end{lm}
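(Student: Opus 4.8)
The plan is to reduce the statement to a pointwise computation in a local orthonormal frame $\{e_i\}$ that is geodesic at a fixed point $p$ (so $\nabla_{e_i}e_j=0$ at $p$), feeding in three standard ingredients: the Leibniz rule for the Laplacian applied to the function $g(X,Y)$, the Weitzenb\"ock decomposition $\Delta Z=\nabla^{*}\nabla Z+\Ric(Z)$ for vector fields (equivalently for $1$-forms), and the two classical facts about a Killing field $Y$, namely that $\nabla Y$ is skew-symmetric and that $\nabla^{*}\nabla Y=\Ric(Y)$ (equivalently $\Delta Y=2\Ric(Y)$).

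First I would expand $\Delta\big(g(X,Y)\big)=-\sum_i e_i e_i\big(g(X,Y)\big)$ at $p$, differentiating twice via metric compatibility of $\nabla$; the two cross terms coincide, and one obtains, for \emph{arbitrary} $X,Y$,
$$\Delta\big(g(X,Y)\big)=g(\nabla^{*}\nabla X,Y)+g(X,\nabla^{*}\nabla Y)-2\sum_i g(\nabla_{e_i}X,\nabla_{e_i}Y).$$
Now substitute $\nabla^{*}\nabla X=\Delta X-\Ric(X)$ and, using that $Y$ is Killing, $\nabla^{*}\nabla Y=\Ric(Y)$; since $\Ric$ is symmetric the two Ricci contributions cancel, leaving
$$g(\Delta X,Y)=\Delta\big(g(X,Y)\big)+2\sum_i g(\nabla_{e_i}X,\nabla_{e_i}Y).$$

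The last step is to recognise the gradient term as the divergence term in the statement. Writing $\nabla_X Y=(\nabla Y)(X)$ and differentiating along $e_i$ gives $\di(\nabla_X Y)=\sum_i g\big((\nabla_{e_i}\nabla Y)(X),e_i\big)+\sum_i g\big((\nabla Y)(\nabla_{e_i}X),e_i\big)$; skew-symmetry of $\nabla Y$ turns the second sum into $-\sum_i g(\nabla_{e_i}X,\nabla_{e_i}Y)$, while the first sum equals $\Ric(X,Y)$ — this is the traced second-order Killing identity $(\nabla_U\nabla Y)(V)=-R(Y,U)V$ together with the curvature symmetries, and it can also be read off from $\delta(\nabla Y)^{\flat}=\Ric(Y)^{\flat}$, which follows from $\dif Y^{\flat}=2(\nabla Y)^{\flat}$, $\delta Y^{\flat}=0$ and $\Delta Y=2\Ric(Y)$. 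Hence $\sum_i g(\nabla_{e_i}X,\nabla_{e_i}Y)=\Ric(X,Y)-\di(\nabla_X Y)$, and inserting this into the previous display together with $g(X,\Delta Y)=2\Ric(X,Y)$ yields the claimed identity.

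I expect the only delicate point to be the bookkeeping of sign and normalisation conventions — for the Hodge Laplacian, the curvature tensor, $\delta$ on $2$-forms, and in the Killing identities — since each cancellation above is sign-sensitive; the underlying computations (the double differentiation in the first step, the product rule in the last) are entirely routine. The hypothesis that $Y$ is Killing enters twice and is genuinely used: once through $\nabla^{*}\nabla Y=\Ric(Y)$, and once through skew-symmetry of $\nabla Y$ and the second-order Killing identity.
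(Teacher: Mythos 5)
Your proof is correct, but it handles the Killing hypothesis differently from the paper. Both arguments begin the same way: expand $\Delta\big(g(X,Y)\big)$ pointwise in an orthonormal frame and invoke the Weitzenb\"ock formula $\Delta X=\nabla^{*}\nabla X+\Ric X$. The paper, however, uses the Killing condition only through the first-order fact that $\nabla Y$ is skew-symmetric: the cross terms $\sum_i e_i\big(g(X,\nabla_{e_i}Y)\big)-g(X,\nabla_{\nabla_{e_i}e_i}Y)$ are converted directly into $-\di(\nabla_X Y)$, Weitzenb\"ock is applied to both $X$ and $Y$, and the two Ricci terms cancel by symmetry of $\Ric$ --- so $\Delta Y$ is never computed and no curvature identity for Killing fields is needed. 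You instead keep the cross term as $-2\sum_i g(\nabla_{e_i}X,\nabla_{e_i}Y)$, cancel the Ricci terms using Bochner's formula $\nabla^{*}\nabla Y=\Ric Y$, and then need a second computation --- the second-order Killing identity $\nabla^{2}_{U,V}Y=-R(Y,U)V$, equivalently $\delta(\nabla Y)^{\flat}=\Ric(Y)^{\flat}$ --- to trade $\sum_i g(\nabla_{e_i}X,\nabla_{e_i}Y)$ for $\Ric(X,Y)-\di(\nabla_X Y)$, finally reinstating $2\Ric(X,Y)=g(X,\Delta Y)$. The net result is the same identity, every step you describe checks out, and all the facts you use hold pointwise, so nothing extra (e.g.\ compactness) is needed; the price of the detour is precisely the sign- and convention-sensitive bookkeeping you flag, since you cancel and then re-create the Ricci and $\Delta Y$ contributions that the paper simply carries along untouched. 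The paper's route is therefore leaner --- skew-symmetry of $\nabla Y$ is the only place the Killing hypothesis enters --- while yours relies on the stronger Killing-field identities $\nabla^{*}\nabla Y=\Ric Y$ and $\Delta Y=2\Ric Y$.
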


\begin{proof}
We first check that for any vectors $X$ and $Y$:
\begin{equation*}
\begin{split}
g(-\tr \nabla^2 X, Y) = &\Delta(g(X, Y)) + g(X, -\tr \nabla^2 Y)\\
&+2e_i(g(X, \nabla_{e_i}Y))-2g(X, \nabla_{\nabla_{e_i}e_i}Y),
\end{split}
\end{equation*}
where $\{e_i\}$ is a local orthonormal frame. Since $Y$ is a Killing vector field, the above identity becomes:
\begin{equation*}
\begin{split}
g(-\tr \nabla^2 X, Y) = &\Delta(g(X, Y)) + g(X, -\tr \nabla^2 Y)-2\di(\nabla_X Y),
\end{split}
\end{equation*}
which, by using Weitzenb\"ock identity
\begin{equation} \label{weitz}
\Delta X = -\tr \nabla^2 X + \Ric X,
\end{equation}
yields the desired result.
\end{proof}

\begin{pr}\label{mag}
Let $(M, \xi, \eta, \phi, g)$ be a Sasakian $3$-manifold. If $X$ is a $\cu$ eigenfield,
$\cu X = \mu X$, then $f=\eta(X)$ is an eigenfunction of the Laplacian:
\begin{equation}\label{magic}
\Delta f = \mu(\mu-2)f.
\end{equation}
In particular, if $f$ does not identically vanish, then either $\mu \leq 0$ or $\mu \geq 2$.
\end{pr}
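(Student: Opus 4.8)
The plan is to apply Lemma~\ref{movedelta} with the Killing field $Y=\xi$ to the curl eigenfield $X$, and then to obtain \eqref{magic} by evaluating each of the three terms on its right-hand side. Throughout, write $f=\eta(X)=g(X,\xi)$.

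First, since $\mu$ is constant, applying $\di$ to $\cu X=\mu X$ shows that $X$ is divergence-free (for $\mu=0$ this is part of what it means to be a curl eigenfield). Hence $\Delta X=\cu(\cu X)=\mu^2 X$, and likewise $\Delta\xi=\cu(\cu\xi)=4\xi$ because $\cu\xi=2\xi$. This evaluates the first two terms in Lemma~\ref{movedelta}: $g(\Delta X,\xi)=\mu^2 f$ and $g(X,\Delta\xi)=4f$.

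For the third term we compute $\di(\nabla_X\xi)$. On a Sasakian manifold $\nabla_X\xi=-\phi X$ (see the remark following \eqref{conne}); writing $X=f\xi+f_1X_1+f_2X_2$ and using $\phi X_1=-X_2$, $\phi X_2=X_1$, we get $\phi X=f_2X_1-f_1X_2\in\DD$. Applying the divergence formula \eqref{diverg} to this $\DD$-field gives $\di(\phi X)=X_1(f_2)-X_2(f_1)-C_1f_1-C_2f_2$, and comparing with the $\eta$-component of the equation $\cu X=\mu X$ read off from \eqref{curl123} identifies this expression with $(\mu-2)f$. Therefore $\di(\nabla_X\xi)=-\di(\phi X)=(2-\mu)f$. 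Substituting the three evaluations into Lemma~\ref{movedelta} yields $\mu^2 f=\Delta f+4f-2(2-\mu)f$, i.e. $\Delta f=\mu(\mu-2)f$, which is \eqref{magic}. For the last assertion, compactness of $M$ forces the spectrum of the Hodge Laplacian on functions to be non-negative, so if $f\not\equiv0$ then $\mu(\mu-2)\ge0$ and hence $\mu\le0$ or $\mu\ge2$.

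The one step that demands care is the identity $\di(\phi X)=(\mu-2)f$, which rests on the orientation and Hodge-star conventions fixed by \eqref{dvol}--\eqref{conv0} (the same conventions that pin down the signs in $\cu\xi=2\xi$ and in $\phi X=f_2X_1-f_1X_2$) together with the explicit connection coefficients \eqref{conne} and curl components \eqref{curl123}. Everything else is routine bookkeeping with Lemma~\ref{movedelta}, once one observes that $\cu$ squared acts as multiplication by $\mu^2$ on a divergence-free $\mu$-eigenfield of $\cu$.
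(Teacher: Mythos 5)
Your proposal is correct and follows essentially the same route as the paper: apply Lemma~\ref{movedelta} with $Y=\xi$, use $\Delta X=\mu^2X$ and $\Delta\xi=4\xi$, and reduce everything to the identity $\di(\phi X)=(\mu-2)f$, where $\phi X=-\nabla_X\xi$. The only (harmless) difference is that you verify $\di(\phi X)=(\mu-2)f$ by comparing the $\eta$-component of \eqref{curl123} with the frame formula \eqref{diverg} applied to the $\DD$-field $\phi X$, whereas the paper obtains the same identity by a coordinate-free Hodge-star computation of $\delta(\phi\alpha)$.
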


\begin{proof}
Let $(M, \xi, \eta, \phi, g)$ be a Sasakian 3-manifold and $X$ a vector field satisfying $\cu X = \mu X$ for some $\mu \in \RR$. Let $f$ denote $\eta(X)$. Applying Lemma \ref{movedelta} to $X$ and $\xi$ we obtain (recalling that $\Delta \xi =4\xi$):
\begin{equation}\label{deltas}
\mu^2 f = \Delta f + 4f + 2\di(\phi X)
\end{equation}
since $\phi X=-\nabla_X \xi$. With respect to an adapted orthonormal co-frame  $\{\xi, X_1= \phi X_2 , X_2 \}$ (so that $\dif \eta = 2\omega_1 \wedge \omega_2$), we write $\alpha =X^\flat= f \eta + f_1 \omega_1 + f_2 \omega_2$ and since $\di(\phi X)=-\delta (\phi \alpha)$ for $\phi \alpha = (\phi X)^\flat$, we compute

\begin{equation*}
\begin{split}
\delta (\phi \alpha)&=-\ast \dif \ast (\phi \alpha)  = -\ast \dif \ast (-f_1 \omega_2 + f_2 \omega_1) \\
 & = -\ast \dif (-f_1 \eta \wedge \omega_1 - f_2 \eta \wedge \omega_2) = \ast \dif (\eta \wedge (\alpha-f\eta))\\
 & = \ast (\dif \eta \wedge (\alpha-f\eta)- \eta \wedge (\dif \alpha-f \dif \eta))\\
 & = \ast (- \eta \wedge \dif \alpha + f \eta \wedge \dif \eta)\\
 & =  -(\ast \dif \alpha)(\xi) + 2f,
\end{split}
\end{equation*}
where in the last line we used the general identity for a 1-form $\theta$:
$\ast(\theta \wedge \beta)=\imath_{\theta^\sharp}\ast \beta$. Since by hypothesis
$\ast \dif \alpha = \mu \alpha$ we obtain: $\delta (\phi \alpha)=(2-\mu)f$, so $\di(\phi X)=(\mu-2)f$, which yields the result by injecting it in \eqref{deltas}.
\end{proof}

Noticing that the curl operator has a discrete spectrum on any compact Riemannian $3$-manifold without boundary \cite{baer}, and that the Reeb field satisfies $\cu \xi=2\xi$, the following corollary is an immediate consequence of Proposition~\ref{mag}:
\begin{co}\label{mag2}
Let $(M, \xi, \eta, \phi, g)$ be a Sasakian 3-manifold. If the spectrum of the $\cu$ operator acting on vector fields tangent to the contact distribution is not empty, assume that its smallest positive eigenvalue $\mu_1^\mathcal D$ satisfies $\mu_1^\mathcal D\geq 2$. Then $\mu_1=2$ is the first positive eigenvalue of
$\cu$.
\end{co}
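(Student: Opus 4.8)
The plan is to rule out any curl eigenvalue lying strictly between $0$ and $2$, and then invoke discreteness of the spectrum. Two preliminary observations fix the picture. First, $\cu\xi=2\xi$, so $2$ already belongs to the spectrum of $\cu$ and hence $\mu_1\le 2$. Second, since $M$ is compact without boundary, the Hodge Laplacian acting on functions is a nonnegative operator (indeed $\Delta f=-\di(\nabla f)$), so every eigenvalue of $\Delta$ on $C^\infty(M)$ is $\ge 0$.

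Next I would argue by contradiction: suppose $X$ is a $\cu$-eigenfield with $\cu X=\mu X$ and $0<\mu<2$. Set $f:=\eta(X)$. By Proposition~\ref{mag} one has $\Delta f=\mu(\mu-2)f$, and since $0<\mu<2$ the constant $\mu(\mu-2)$ is strictly negative. As it would be an eigenvalue of the nonnegative operator $\Delta$, this forces $f\equiv 0$, i.e.\ $X$ is everywhere tangent to the contact distribution $\DD$. Then $X$ is a nonzero eigenfield of $\cu$ lying in $\DD$ with positive eigenvalue $\mu$, so the spectrum of $\cu$ restricted to $\DD$ is nonempty and $\mu\ge\mu_1^{\DD}\ge 2$ by hypothesis, contradicting $\mu<2$. (The same computation also covers the case in which the $\DD$-spectrum is empty: it shows $f\not\equiv0$ for such an $X$, which is impossible by nonnegativity of $\Delta$; so in every case no eigenvalue of $\cu$ lies in the open interval $(0,2)$.)

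Finally I would assemble the pieces: the spectrum of $\cu$ is discrete \cite{baer}, so the set of its positive eigenvalues has a smallest element $\mu_1$; we have shown $\mu_1\notin(0,2)$ while $\mu_1\le 2$, hence $\mu_1=2$. I do not anticipate a genuine obstacle here, since all the analytic content is encapsulated in the Bochner-type identity of Proposition~\ref{mag}, which we are entitled to assume; the only point requiring a little care is the logical status of the hypothesis on $\mu_1^{\DD}$ when the $\DD$-spectrum happens to be empty, which is handled by the parenthetical remark above.
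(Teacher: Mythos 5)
Your argument is correct and is exactly the paper's route: the paper deduces the corollary immediately from Proposition~\ref{mag} (which forces any positive eigenvalue with $\eta(X)\not\equiv 0$ to be $\geq 2$ via nonnegativity of $\Delta$), the fact that $\cu\,\xi=2\xi$, the hypothesis $\mu_1^{\DD}\geq 2$ for eigenfields tangent to $\DD$, and the discreteness of the curl spectrum from \cite{baer}. You have merely written out in detail the contradiction the paper leaves implicit, so there is nothing to add.
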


By rewriting Equations~\eqref{deig} with respect to a $\mathcal{D}$-homothetically deformed metric we easily obtain:
\begin{lm} \label{murescale}
If $X \in \DD$ is a $\mu$-eigenfield of $\cu$, then $X$ is a $\frac{\mu}{a}$-eigenfield of $\cu$ on the $\DD$-homothetic deformation  $(M, \xi^\prime, \eta^\prime, \phi^\prime, g^\prime)$ of $(M, \xi, \eta, \phi, g)$ with constant $a>0$.
\end{lm}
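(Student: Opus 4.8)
The plan is to rewrite the eigenfield system~\eqref{deig} in an adapted orthonormal coframe for the deformed metric $g'$ and simply read off the new proportionality factor. Since $g'$ restricted to $\DD$ equals $a\,g$, the triple $\{\eta',\omega_1',\omega_2'\}:=\{a\eta,\sqrt a\,\omega_1,\sqrt a\,\omega_2\}$ is $g'$-orthonormal, and one checks directly that it is adapted to $\eta'$: the Reeb field $\xi'=a^{-1}\xi$ has unit $g'$-norm, $\eta'(\xi')=1$, and $\upsilon_{g'}=\eta'\wedge\omega_1'\wedge\omega_2'=a^2\,\upsilon_g=\tfrac12\eta'\wedge\dif\eta'$. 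Hence the structural equations~\eqref{struct} hold for $(g',\eta',\omega_1',\omega_2')$ with some functions $C_i'$ and dual frame $X_i'=(\omega_i')^\sharp$. Writing $\dif\omega_i'=\sqrt a\,\dif\omega_i$ and re-expressing $\eta,\omega_1,\omega_2$ through the primed forms, a comparison with~\eqref{struct} yields
\begin{equation*}
C_0'+1=a^{-1}(C_0+1),\qquad C_1'=a^{-1/2}C_1,\qquad C_2'=a^{-1/2}C_2,
\end{equation*}
together with $X_i'=a^{-1/2}X_i$ and $\xi'=a^{-1}\xi$.

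Next I would express a vector field $X\in\DD$ in both coframes. If $X^\flat=f_1\omega_1+f_2\omega_2$ with respect to $g$, then with respect to $g'$ one has $X^{\flat}=aX^\flat=\sqrt a\,f_1\,\omega_1'+\sqrt a\,f_2\,\omega_2'$, so its primed components are $f_i'=\sqrt a\,f_i$. Substituting the scalings of $C_i'$, $X_i'$, $\xi'$ and $f_i'$ into the system~\eqref{deig} for $\cu$ on $(M,\xi',\eta',\phi',g')$ with proportionality factor $\mu'$ — for instance the first equation reads $a^{-1}\sqrt a\,\xi(f_1)=\sqrt a\,\bigl(\mu'-a^{-1}(C_0+1)\bigr)f_2$ — and then invoking that $X$ already satisfies~\eqref{deig} for $(g,\mu)$, one finds that the first two equations hold if and only if $\mu'=\mu/a$, while the third equation is scaling-invariant and is therefore automatically satisfied. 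This proves that $X$ is a $(\mu/a)$-eigenfield of $\cu$ for the $\DD$-homothetic deformation.

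There is no genuine difficulty here; the computation is pure bookkeeping. The only point that needs care is that the $\DD$-homothety is \emph{anisotropic} — it scales $\DD$ by $a$ but acts differently along $\xi$ — so the powers of $a$ carried by $\eta'$, the $\omega_i'$, the dual frame, the structural functions and the components $f_i'$ must be tracked separately. As an independent check one can avoid coframes altogether: from $X\in\DD$ one gets $X^{\flat'}=a\,X^\flat$, hence $\dif X^{\flat'}=a\,\dif X^\flat=a\mu\,\ast X^\flat$ (using $\cu X=\mu X$ and $\ast^2=\mathrm{id}$ in dimension $3$), and a short evaluation of $\ast'$ on the $2$-form $\ast X^\flat$ in the primed coframe gives $(\ast'\ast X^\flat)^{\sharp'}=a^{-2}X$, so that $\cu' X=(\ast'\dif X^{\flat'})^{\sharp'}=a^{-1}\mu\,X$, in agreement with the above.
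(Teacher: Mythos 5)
Your proposal is correct and follows essentially the same route as the paper, which obtains the lemma precisely by rewriting the eigenfield system~\eqref{deig} in an adapted coframe for the $\DD$-homothetically deformed metric; your bookkeeping of the scalings of $\eta'$, $\omega_i'$, $X_i'$, $C_i'$ and $f_i'$ is accurate, and the third equation of~\eqref{deig} is indeed scale-invariant. The direct Hodge-star check at the end is a nice independent confirmation, fully consistent with the coframe computation.
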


Corollary~\ref{mag2} and Lemma~\ref{murescale} readily imply that the first positive eigenvalue of the curl operator on any Sasakian $3$-manifold is $2$, up to $\DD$-homothetic deformations:
\begin{pr}\label{P:imp}
Let $(M, \xi, \eta, \phi, g)$ be a compact Sasakian 3-manifold. If there is no eigenfield of $\cu$ tangent to the contact distribution, then the first positive eigenvalue of $\cu$ is $\mu_1=2$. If there are eigenfields tangent to the contact distribution, then $\mu_1=2$ on the $\mathcal D$-homothetic deformation of the manifold with constant $a$ if and only if $a\leq \frac{\mu_1^\DD}{2}$.
\end{pr}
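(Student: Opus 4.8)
The plan is to obtain both assertions as direct consequences of Proposition~\ref{mag} (equivalently, Corollary~\ref{mag2}) together with the rescaling Lemma~\ref{murescale}, after recording a few structural remarks about the $\DD$-homothetic deformation. The remarks I would state first are: the contact distribution is unchanged, $\Ker \eta' = \Ker \eta = \DD$, since $\eta' = a\eta$; the deformed structure $(M, \xi', \eta', \phi', g')$ is again Sasakian, hence adapted, so $\cu' \xi' = 2\xi'$ and $2$ is always a positive eigenvalue of $\cu'$; and, since the curl spectrum is discrete on a compact manifold without boundary, a first positive eigenvalue $\mu_1$ exists and is $\leq 2$.

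For the first claim I would argue by contradiction. Suppose $\mu_1 < 2$ and pick an eigenfield $X$ with $\cu X = \mu_1 X$, $0 < \mu_1 < 2$. Proposition~\ref{mag} gives $\Delta(\eta(X)) = \mu_1(\mu_1 - 2)\,\eta(X)$, and since $\mu_1$ is neither $\leq 0$ nor $\geq 2$ the same proposition forces $\eta(X) \equiv 0$, i.e. $X$ is tangent to $\DD$. This contradicts the hypothesis that no curl eigenfield is tangent to the contact distribution, so $\mu_1 = 2$.

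For the second claim, assume there is a curl eigenfield tangent to $\DD$ and let $\mu_1^\DD$ be the smallest positive eigenvalue of $\cu$ among eigenfields tangent to $\DD$. By Lemma~\ref{murescale}, applied both to the deformation with constant $a$ and to the deformation with constant $a^{-1}$ of the deformed manifold (which recovers the original structure), the eigenfields of $\cu$ tangent to $\DD$ correspond bijectively to those of $\cu'$ tangent to $\DD$, with eigenvalues divided by $a$; hence the smallest positive eigenvalue of $\cu'$ among fields tangent to $\DD$ is $\mu_1^\DD/a$. If $a \leq \mu_1^\DD/2$, then $\mu_1^\DD/a \geq 2$ and Corollary~\ref{mag2} applied to the deformed manifold yields first positive eigenvalue equal to $2$. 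If $a > \mu_1^\DD/2$, then $\mu_1^\DD/a$ is a curl eigenvalue of the deformed manifold lying in $(0,2)$, so its first positive eigenvalue is $< 2$, in particular not $2$. This is the claimed equivalence.

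I do not expect a real obstacle: the mathematical content is carried entirely by Proposition~\ref{mag} and Lemma~\ref{murescale}, and the only care needed is with the bookkeeping above — that $\DD$ and the normalization $\cu\xi = 2\xi$ survive a $\DD$-homothetic deformation (immediate from Definition~\ref{d1} and the already-recalled fact that the deformed structure is Sasakian), together with the degenerate possibility that $\cu$ has no positive eigenvalue among fields tangent to $\DD$, in which case the argument of the first claim applies verbatim and again gives $\mu_1 = 2$.
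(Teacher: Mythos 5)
Your proposal is correct and follows essentially the same route as the paper, which deduces Proposition~\ref{P:imp} directly from Corollary~\ref{mag2} and Lemma~\ref{murescale}; your write-up merely makes explicit the bookkeeping (invariance of $\DD$, the bijection of $\DD$-tangent eigenfields under deformation by $a$ and $a^{-1}$, and the contradiction via Proposition~\ref{mag} when $0<\mu_1<2$) that the paper leaves implicit.
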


We will eventually show that, in fact, on Sasakian manifolds there always exist eigenfields tangent to the contact distribution. Notice also that a $\mathcal{D}$-homothetic deformation entails the contraction/dilation only of the eigenvalues associated to eigenvectors in $\DD$, while the Reeb field is still an eigenfield with eigenvalue $\mu=2$, and eigenvectors with mixed components along $\xi$ and $\DD$ are generically not conserved. This feature is reminiscent of the behaviour of eigenvalues on spaces with collapsing metrics, cf.~\cite{colb}.

\subsection{Bochner-type results for Beltrami fields}\label{S:2.2}
In this subsection we investigate some general restrictions on the $\cu$ eigenvalues or eigenfields in terms of the curvature of a Sasakian 3-manifold $(M, \xi, \eta, \phi, g)$, obtaining in particular an estimate for
$\mu_1^\DD$ involved in Proposition \ref{P:imp}. We mention that Lichnerowicz-like lower bounds for $\mu$ in terms of Ricci curvature bounds (that generally do not hold in our case) have been recently obtained in \cite{baer}.

We begin by recalling a special feature of the Ricci curvature in this set-up~\cite{OGau}:
\begin{equation} \label{ric3}
\Ric = \left(\tfrac{1}{2}\Scal-1\right)g+\left(3-\tfrac{1}{2}\Scal \right)\eta \otimes \eta\,.
\end{equation}
This implies that $\xi$ is an eigenfield for $\Ric$ with constant eigenvalue $2$, while $\DD=\Ker \eta$ is an eigen-subbundle with respect to the (generally non-constant) eigenvalue $\tfrac{1}{2} \Scal-1$.

For any $X, Y \in \Gamma(TM)$, let us recall Yano's identity~\cite{yan}:
\begin{equation}\label{yano}
\begin{split}
& \mathrm{Ric}(X, Y) =\\ &= \di\left(\nabla_X Y \right)- X(\di Y) -
\tfrac{1}{2}\langle \mathcal{L}_{X}g , \mathcal{L}_{Y}g \rangle + \tfrac{1}{2}\langle\dif X^{\flat}, \dif Y^{\flat}\rangle \\
&= \di\left(\nabla_X Y \right)- X(\di Y) -
\langle \nabla X ,  \nabla Y \rangle + \langle\dif X^{\flat}, \dif Y^{\flat}\rangle \\
\end{split}
\end{equation}
where the (pointwise) metric on the bundle of $2$-covariant tensors (or $2$-forms) on $M$ is $\langle \mathcal{A}, \mathcal{B}\rangle=\frac{1}{2}\sum_{i_1, i_2=1}^{3} \mathcal{A}(e_{i_1}, e_{i_2})\mathcal{B}(e_{i_1}, e_{i_2})$.

The following proposition is the main result of this subsection. It establishes lower bounds for the first positive eigenvalue of the $\cu$ operator assuming some geometric properties of the Sasakian manifold.

\begin{pr}[Bochner type result]\label{bohn}
Let $(M, \phi, \xi, \eta, g)$ be a compact Sasakian $3$-manifold.
\begin{enumerate}
\item If $\Scal > -2$, then the first positive eigenvalue of the $\cu$ operator associated to an eigenfield $X \in \DD$ (if any) satisfies $\mu_1^\DD \geq \tfrac{1}{4}\min _M \Scal +\tfrac{1}{2}$, with equality holding if and only if both $\abs{X}$ and the scalar curvature are constant.

\item If $\Scal \geq 6$, then the first positive eigenvalue of the $\cu$ operator is $\mu_1 = 2$. In particular, the first non-zero eigenvalue of the Hodge Laplacian restricted to co-closed $1$-forms is $4$.
\end{enumerate}
\end{pr}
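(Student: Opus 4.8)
The plan is to derive both statements from a Bochner-type integral identity obtained by applying Yano's identity \eqref{yano} to a $\cu$-eigenfield, combined with the explicit form \eqref{ric3} of the Ricci tensor on a Sasakian $3$-manifold. For part (1), I would let $X\in\DD$ be a $\cu$-eigenfield with $\cu X=\mu X$ and $\mu>0$. Taking $Y=X$ in \eqref{yano}, integrating over the closed manifold $M$, the divergence term $\int_M \di(\nabla_X X)\,\upsilon_g$ vanishes, and $\di X=0$ (this is the consequence of \eqref{diverg} noted after \eqref{deig}), so one is left with
\begin{equation*}
\int_M \Ric(X,X)\,\upsilon_g = -\int_M \abs{\nabla X}^2\,\upsilon_g + \int_M \abs{\dif X^\flat}^2\,\upsilon_g .
\end{equation*}
Now $\abs{\dif X^\flat}^2 = \abs{\ast\dif X^\flat}^2 = \mu^2\abs{X}^2$ because $\ast\dif X^\flat = \mu X^\flat$, while $\Ric(X,X) = (\tfrac12\Scal - 1)\abs{X}^2$ by \eqref{ric3} since $X\in\DD=\Ker\eta$. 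The remaining ingredient is a lower bound for $\int_M\abs{\nabla X}^2$ in terms of $\int_M\abs{\cu X}^2$; the natural route is the Weitzenböck/integration-by-parts relation $\int_M\abs{\nabla X}^2 = \int_M\bigl(\abs{\di X}^2 + \tfrac12\abs{\dif X^\flat}^2\bigr) + \int_M\Ric(X,X)$ — i.e. re-deriving what is essentially \eqref{yano} with $Y=X$ — which here would only reproduce the same identity; so instead I would estimate $\abs{\nabla X}^2$ directly from below using the structural equations \eqref{conne}, writing $X = f_1 X_1 + f_2 X_2$ and computing $\nabla X$ componentwise. The point is that $\nabla_\xi X$ contributes the terms $\xi(f_1)$, $\xi(f_2)$ which by \eqref{deig} equal $\pm(\mu - C_0 - 1)f_i$; summing the squares of all nine connection coefficients and using \eqref{deig} and $\di X=0$ from \eqref{diverg}, one extracts
\begin{equation*}
\abs{\nabla X}^2 \geq (\mu-1)^2\abs{X}^2 + (\text{nonnegative terms}),
\end{equation*}
or a similar bound, after discarding the $C_1,C_2$-dependent squares; combining with the identity above gives a quadratic inequality in $\mu$ whose positive root is $\mu_1^\DD\geq \tfrac14\min_M\Scal + \tfrac12$ once $\Scal>-2$ guarantees the relevant quantity is positive. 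The equality case forces the discarded terms to vanish pointwise and $\Scal$ to be constant, which unwinds to $\abs{X}$ and $\Scal$ constant.

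For part (2), I would combine part (1) with Proposition~\ref{P:imp} (equivalently Corollary~\ref{mag2}). If $\Scal\geq 6$, then any $\cu$-eigenfield tangent to $\DD$ satisfies $\mu_1^\DD \geq \tfrac14\cdot 6 + \tfrac12 = 2$, so by Corollary~\ref{mag2} the first positive eigenvalue of $\cu$ on all vector fields is $\mu_1 = 2$ (it is at least $2$ since $\xi$ realizes it). Applying $\cu$ once more, co-closed $1$-forms that are $\mu$-eigenforms of $\cu$ are $\mu^2$-eigenforms of the Hodge Laplacian (as recalled in the introduction for $\xi$, where $\mu=2$ gives $\Delta\eta = 4\eta$), so the smallest nonzero Laplace eigenvalue on co-closed $1$-forms is $2^2 = 4$; here one should note that a co-closed $1$-form with $\Delta = \lambda$ and $\lambda\neq 0$ decomposes via the $\cu$-eigenbasis and the minimal $\abs{\mu}$ among eigenvalues with nonzero eigenform is $2$ — using also Proposition~\ref{mag} to rule out $\mu\in(0,2)$ and an analogous argument for negative $\mu$, or simply invoking that $\cu$ is self-adjoint on co-closed forms with $\cu^2 = \Delta$ there.

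The main obstacle I anticipate is the componentwise lower bound for $\int_M\abs{\nabla X}^2$: one must choose which squared connection terms to keep and which to throw away so that the surviving expression is exactly $(\mu-1)^2\abs{X}^2$ (or whatever constant yields the stated $\tfrac14\min\Scal+\tfrac12$ threshold) \emph{and} so that the equality analysis cleanly gives ``$\abs{X}$ and $\Scal$ constant.'' In particular one needs the cross terms involving $X_1(f_1), X_2(f_2), X_1(f_2), X_2(f_1)$ to be organized — using the last equation of \eqref{deig} and \eqref{diverg} — into a perfect square plus the $\mu$-term, rather than lost. A secondary subtlety is ensuring in part (2) that the Hodge-Laplacian statement correctly accounts for all co-closed $1$-forms, not just $\cu$-eigenforms; this is handled because on a closed $3$-manifold $\cu$ restricted to co-exact (equivalently, co-closed modulo harmonic) $1$-forms is self-adjoint with discrete spectrum and squares to $\Delta$, so the spectral correspondence is exact.
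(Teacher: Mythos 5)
Your strategy for item (1) — Yano's identity \eqref{yano} with $Y=X$, the Ricci formula \eqref{ric3}, and a componentwise evaluation of $\nabla X$ through \eqref{conne}, \eqref{deig} and \eqref{diverg} — is essentially the paper's route (the paper works pointwise, keeping $\di(\nabla_XX)=-\tfrac12\Delta\abs{X}^2$, and integrates only at the end; that difference is immaterial). The genuine gap is that the one computation on which the stated constant rests is left undone. The bound you actually write down, $\abs{\nabla X}^2\geq(\mu-1)^2\abs{X}^2$ plus unspecified nonnegative terms to be discarded, only yields $\mu\geq\tfrac14\min_M\Scal$, which misses the claimed threshold by exactly $\tfrac12$, and it cannot produce the equality characterization. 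The missing pointwise identity is
\begin{equation*}
\abs{\nabla X}^2=(\mu-1)^2\abs{X}^2+\abs{X}^2+2\abs{\gr\abs{X}}^2 .
\end{equation*}
Here $(\mu-1)^2\abs{X}^2=\abs{\nabla_\xi X}^2$ (the $C_0$ in \eqref{deig} cancels against $\nabla_\xi X_1=-C_0X_2$, $\nabla_\xi X_2=C_0X_1$, so $\nabla_\xi X=(\mu-1)\phi X$); the extra $\abs{X}^2$ — precisely the term you risk throwing away — comes from the $\xi$-components of $\nabla_{X_1}X$ and $\nabla_{X_2}X$, which are $-f_2$ and $f_1$ because $\nabla_{X_1}X_2=-\xi+C_1X_1$ and $\nabla_{X_2}X_1=\xi-C_2X_2$; and the remaining horizontal components, after imposing $\di X=0$ and the third equation of \eqref{deig} (i.e. $X_1(f_1)+C_1f_2=-(X_2(f_2)-C_2f_1)$ and $X_1(f_2)-C_1f_1=X_2(f_1)+C_2f_2$), sum to $\tfrac{\abs{\gr\abs{X}^2}^2}{2\abs{X}^2}=2\abs{\gr\abs{X}}^2$ wherever $X\neq0$ (hence everywhere, by unique continuation). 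With this identity the integrated Yano formula becomes $0=\int_M\big[(\tfrac12\Scal+1-2\mu)\abs{X}^2+2\abs{\gr\abs{X}}^2\big]\upsilon_g$, which gives $\mu\geq\tfrac14\min_M\Scal+\tfrac12$, and equality forces $\abs{X}$ constant (nonzero) and $\Scal\equiv\min_M\Scal$. Note the equality analysis works only if the sole discarded term is $2\abs{\gr\abs{X}}^2$; discarding the $\abs{X}^2$ term as well would make equality force $X\equiv0$.

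Item (2) of your proposal coincides with the paper's proof (item (1) gives $\mu_1^\DD\geq2$ when $\Scal\geq6$, then Corollary~\ref{mag2}), and your reading of the Hodge--Laplacian statement via the self-adjointness of $\cu$ on co-closed $1$-forms with $\cu^2=\Delta$ is the standard one; on the negative part of the curl spectrum you are no less detailed than the paper itself (and note that the Bochner identity above, which never uses $\mu>0$, in fact excludes negative eigenvalues with eigenfields in $\DD$ when $\Scal\geq6$).
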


\begin{proof}
Item $(1)$: Let $X \in \DD$ be a $\cu$ eigenfield with eigenvalue $\mu > 0$. Choose an adapted orthonormal basis $\{\xi, X_1, X_2\}$. We can check that:

\begin{itemize}
\item $\nabla_ X X = \frac{1}{2}\gr \abs{X}^2$ (this holds for any Beltrami field). \\

\item $\nabla_{\xi} X = (\mu -1) \phi X$ so that $\abs{\nabla_{\xi} X}^2 = (\mu -1)^2 \abs{X}^2$. \\

\item $\abs{\nabla_{X_1} X}^2 + \abs{\nabla_{X_2} X}^2 =  \abs{X}^2 +  \frac{\abs{\gr \abs{X}^2}^2}{2\abs{X}^2}$ at any point $p\in M$ where $X(p)\neq0$. In fact, since the zero set of a Beltrami field is nowhere dense (by unique continuation~\cite{Kaz}), it is easy to check that the following equality also holds on the whole $M$: $\abs{\nabla_{X_1} X}^2 + \abs{\nabla_{X_2} X}^2 =  \abs{X}^2 +  2\abs{\gr \abs{X}}^2$.\\

\item $\abs{\dif X^{\flat}}^2 =  \mu^2 \abs{X}^2$.\\
\end{itemize}

Using these properties, Yano's identity and Eq.~\eqref{ric3}) we obtain
$$
\left(\tfrac{1}{2}\Scal-1\right)\abs{X}^2=-\tfrac{1}{2}\Delta \left( \abs{X}^2 \right) +2(\mu -1)\abs{X}^2
- 2\abs{\gr \abs{X}}^2\,,
$$
which can be rewritten as:
\begin{equation}\label{newbohn0}
-\tfrac{1}{2}\Delta \left( \abs{X}^2 \right)=\left(\tfrac{1}{2}\Scal +1 -2\mu\right)\abs{X}^2 + 2\abs{\gr \abs{X}}^2,
\end{equation}
or, equivalently, since $\tfrac{1}{2}\Delta f^2 = f\Delta f - \abs{\gr f}^2$ for any $f$,
\begin{equation}\label{newbohn}
-\abs{X} \, \Delta \left( \abs{X} \right)=\left(\tfrac{1}{2}\Scal +1 -2\mu\right)\abs{X}^2 + \abs{\gr \abs{X}}^2,
\end{equation}
where $\Delta = -\di \circ \gr$.

Integrating Eq.~\eqref{newbohn0} (or applying the maximum principle in \eqref{newbohn}) we obtain the lower bound
\begin{equation}\label{lowb0}
\mu \geq \tfrac{1}{4}\min _M \Scal +\tfrac{1}{2},
\end{equation}
with equality holding if and only if both $\abs{X}$ and the scalar curvature are constant.

Item $(2)$: According to Item (1), $\Scal \geq 6$ implies $\mu_1^\DD \geq 2$. Applying Corollary \ref{mag2} yields the conclusion.
\end{proof}


\begin{re}
In item $(1)$ of the above Proposition assume that $\abs{X}$ is not constant. Then, by integrating Eq.~\eqref{newbohn} and taking into account that $\xi(\abs{X})=0$, we obtain the lower bound
\begin{equation}\label{lowb}
\mu_1^\DD \geq \tfrac{1}{4}\min _M \Scal +\tfrac{1}{2} + \lambda_1(\eta).
\end{equation}
where $\lambda_1(\eta)$ is the first eigenvalue of the sub-Laplacian $\Delta+\xi^2$ acting on functions.
\end{re}

\noindent We finish this subsection by presenting two examples of Sasakian 3-manifolds, the Berger sphere and the weighted sphere, which illustrate the implications of Proposition~\ref{bohn}:

\begin{ex} \label{berger}
On the round unit $3$-sphere $\Ss^3$ with the standard Sasakian structure consider the $\DD$-homothetic deformation $g^\prime=a(g^\DD + a\eta \otimes \eta)$, which is a rescaling of the well-known \emph{Berger metric}. Then $\Scal^\prime=-2+ 8a^{-1}$, so that if $0 <a \leq 1$ we have $\Scal^\prime \geq 6$, thus $\mu \geq 2$ on $(\Ss^3, g^\prime)$ (this fact was used in~\cite{svee}).
\end{ex}

More generally, if $\Scal > -2$, then for any $0< a \leq \frac{1}{8}(\min_M \Scal + 2)$ we have $\Scal^\prime =a^{-1}(\Scal+2)-2 \geq 6$, thus $\mu \geq \mu_1 = 2$ on $(M, g^\prime)$. Then, Item $(1)$ in Proposition~\ref{bohn} tells us that this (approximate) threshold for $a$ in terms of the curvature may be lower than the (sharp) one in Proposition~\ref{P:imp}.

\begin{ex}
On the weighted sphere $\Ss^3_w$ (the definition is presented in Section~\ref{S:weighted}), since $\Scal$ is not constant, we cannot have $\cu$ $\mu$-eigenfields $X \in \DD$ of constant length. Instead, cf. \eqref{lowscalw} below we have $\min _{\Ss_w^3} \Scal=8(2\ell - k) -2$, so that $\mu_1^\DD \geq 2(2\ell -k) + \lambda_1(\eta_w)$. In particular, Proposition~\ref{bohn} implies that $\mu_1 = 2$ on $\Ss_w^3$ provided that $k \leq 2\ell-1$.
\end{ex}

\section{Characterization of Reeb fields that are energy minimizers}\label{S:3}

The purpose of this section is to investigate the minimization property of the Reeb vector field $\xi$ in a Sasakian 3-manifold. The first main result, cf. Lemma~\ref{destabxi}, shows that $\xi$ is an unstable critical point of the energy $E$ in its adjoint orbit, and hence not even a local minimizer, provided there exists an eigenfield tangent to the contact distribution with an eigenvalue less than 2. This (negative) result is complemented with Theorem~\ref{minfirst}, where we prove that $\xi$ is an energy minimizer if and only if the first positive eigenvalue of the curl operator is $\mu_1=2$, therefore reducing our energy minimization problem to a spectral geometry question.

In order to achieve these goals we need to obtain a second variation formula for the energy under the action of volume-preserving diffeomorphisms at a critical point, which turns out to be a steady incompressible Euler flow~\cite{arn}. On a compact Riemannian 3-manifold $(M, g)$ (not necessarily Sasakian), let
$\Gamma_0(TM)$ be the space of smooth divergence free vector fields. The energy functional is:
$$
E: \Gamma_0(TM)\to \RR_+, \quad E(X)=\frac{1}{2}\int_M \!\! \abs{X}^2 \upsilon_{g}.
$$
A (smooth) variation of $X$ is defined as $X_t = \dif \psi_t(X)$, where $\psi_t \in \SD_0 (M)$, $\psi_0 = Id_M$ (here $\SD_0 (M)$ denotes the identity
component of the group of volume preserving diffeomorphisms). The \textit{variation vector field} is defined as:
$$
v = \frac{\partial \psi_t}{\partial t} \Bigl \vert _{t=0} \in
\Gamma_0(TM).
$$
\begin{pr}[First variation formula] For any divergence-free vector field $X$, the following formula holds:
\begin{equation}\label{var1}
\frac{\dif}{\dif t}E(X_t)\Bigl \vert _{t=0}=-\int_M \langle v, \nabla_X X \rangle \upsilon_g .
\end{equation}
In particular, $X$ is a critical point of $E$ with respect to variations through volume preserving diffeomorphisms if and only if there exists a function $p$ on $M$ such that $\nabla_X X = -\gr p$. Therefore, $X$ is a steady solution of the Euler equations for incompressible and inviscid flows.
\end{pr}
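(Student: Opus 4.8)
The plan is to concentrate the time-dependence in the metric rather than in the vector field, and then to integrate by parts on the closed manifold $M$. First I would use that each $\psi_t$ preserves the Riemannian volume, $\psi_t^*\upsilon_g=\upsilon_g$. Performing the change of variables $y=\psi_t(x)$ in $E(X_t)=\frac12\int_M\abs{X_t}^2\,\upsilon_g$ and invoking the defining relation $X_t\circ\psi_t=\dif\psi_t\circ X$, so that $\abs{X_t}^2(\psi_t(x))=\abs{\dif\psi_t(X(x))}^2=(\psi_t^*g)(X(x),X(x))$, one obtains
\begin{equation*}
E(X_t)=\tfrac12\int_M (\psi_t^*g)(X,X)\,\upsilon_g .
\end{equation*}
Since $\psi_0=Id_M$, I would then differentiate under the integral sign using $\frac{\dif}{\dif t}\big|_{t=0}\psi_t^*g=\mathcal L_v g$ together with the pointwise identity $(\mathcal L_v g)(X,X)=2g(\nabla_X v,X)$, valid for the Levi-Civita connection, which gives $\frac{\dif}{\dif t}E(X_t)\big|_{t=0}=\int_M g(\nabla_X v,X)\,\upsilon_g$.

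The second step is the integration by parts. Writing $g(\nabla_X v,X)=X\!\big(g(v,X)\big)-g(v,\nabla_X X)$ and using that $X(h)=\di(hX)$ for every function $h$ because $\di X=0$, the first term integrates to zero over $M$ by the divergence theorem, which yields precisely formula~\eqref{var1}. For the characterization of critical points, I would observe that the admissible variation fields $v$ exhaust $\Gamma_0(TM)$: the flow of any divergence-free $v$ is, by Liouville's theorem, a smooth path in $\SD_0(M)$ through the identity with velocity $v$. Hence $X$ is critical if and only if $\nabla_X X$ is $L^2$-orthogonal to every divergence-free vector field, and by the Hodge (Helmholtz) decomposition on a compact boundaryless manifold the $L^2$-orthogonal complement of $\Gamma_0(TM)$ inside $\Gamma(TM)$ is exactly the space of gradient fields; therefore $\nabla_X X=-\gr p$ for some smooth function $p$. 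Combined with $\di X=0$, this is the stationary incompressible Euler system.

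This is essentially a routine computation, and I do not expect a genuine obstruction. The two points that require care are the pullback identity $E(X_t)=\frac12\int_M(\psi_t^*g)(X,X)\upsilon_g$, where one must keep careful track of where points versus tangent vectors are being evaluated and use volume preservation exactly once, and, in the ``only if'' direction, the appeal to the Hodge decomposition to upgrade ``annihilates all divergence-free fields in $L^2$'' into ``is a gradient''. These are the steps where a careless argument would fail.
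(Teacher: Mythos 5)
Your proof is correct and follows essentially the same route as the paper: both use volume preservation once to transfer the $t$-dependence onto the pulled-back data over a fixed $X$, reduce the derivative at $t=0$ to $\int_M \langle \nabla_X v, X\rangle\,\upsilon_g$, integrate by parts using $\di X=0$, and invoke the Hodge decomposition for the critical-point characterization. The only difference is cosmetic bookkeeping --- you differentiate $\psi_t^*g$ via $\tfrac{\dif}{\dif t}\big\vert_{t=0}\psi_t^*g=\mathcal L_v g$ and the identity $(\mathcal L_v g)(X,X)=2\langle\nabla_X v,X\rangle$, whereas the paper uses the pull-back connection and $[X,\partial_t]=0$; you are also slightly more explicit than the paper in the ``only if'' direction, noting that every divergence-free $v$ arises as a variation field because its flow lies in $\SD_0(M)$.
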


\begin{proof} Let $\Psi:(-\epsilon,\epsilon)\times M \to M$, $\Psi(t,x) = \psi_t(x)$ be a smooth family of volume-preserving diffeomorphisms with $\psi_0= Id_M$. Denote by $\nabla^{\Psi}$ the pull-back connection on $\Psi^{-1} TM$ (see \cite{ud}). Then
\begin{equation*}
\begin{split}
\frac{\dif}{\dif t}E(X_t)\Bigl \vert _{t=0}=& \int_M \langle \nabla_{\partial_t}^{\Psi} \dif \Psi(X), \dif \Psi(X) \rangle \upsilon_g \Bigl \vert _{t=0} \\
(\textrm{since} \ [X, \partial_t]=0) \quad =& \int_M \langle \nabla_{X}^{\Psi} \dif \Psi(\partial_t), \dif \Psi(X) \rangle \upsilon_g \Bigl \vert _{t=0} \\
=&  - \int_M \langle \dif \Psi(\partial_t), \nabla_{X}^{\Psi} \dif \Psi(X) \rangle \upsilon_g \Bigl \vert _{t=0} = - \int_M \langle v, \nabla_{X} X \rangle \upsilon_g,
\end{split}
\end{equation*}
where in the third equality we used $\di X = 0$ (so that $\int_M X(f)\upsilon_g=0$ for any scalar function $f$). The result follows from Hodge decomposition.
\end{proof}

Recall that any Beltrami field  $X$ (solution of \eqref{belgeneq}) is in particular a critical point for the above variational problem, with $p=-\frac{1}{2}\abs{X}^2$ (up to an additive constant).

We are now ready to compute the second variation of $E$ at a critical point:

\begin{pr}[Second variation formula]\label{P:secondvar}
Let $X$ be a steady incompressible Euler flow with pressure $p$, hence a critical point of the energy in its $\SD(M)$ orbit, and $\{X_t\}$ be a variation through volume preserving diffeomorphisms with variation vector $v$. Then:
\begin{equation}\label{var2}
\frac{\dif^2}{\dif t^2}E(X_t)\Bigl \vert _{t=0} =
-\int_M \{\mathrm{Hess}_p(v,v)+ \langle v, \nabla_X \nabla_X v + R(v, X)X \rangle\} \upsilon_g.
\end{equation}
\end{pr}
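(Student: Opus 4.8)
The plan is to differentiate the first-variation identity \eqref{var1} one more time in $t$, being careful that along a non-trivial path $\psi_t$ the variation field $\partial_t$ at time $t$ is no longer simply $v$, and that $X_t=\dif\psi_t(X)$ is itself a non-constant section of $\Psi^{-1}TM$. Concretely, I would work on $\Psi^{-1}TM$ with the pull-back connection $\nabla^\Psi$, write $w_t:=\dif\Psi(\partial_t)$ (so $w_0=v$), and start from
$$
\frac{\dif}{\dif t}E(X_t)=-\int_M\langle w_t,\nabla^{\Psi}_{X}\dif\Psi(X)\rangle\,\upsilon_g,
$$
which is the pre-Hodge-decomposition form of \eqref{var1} valid for every $t$ (here I use $[X,\partial_t]=0$ and volume preservation exactly as in the first-variation proof). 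Differentiating once more in $t$ and setting $t=0$ produces two terms: one where $\nabla^{\Psi}_{\partial_t}$ hits $w_t$, giving $\langle\nabla_v v,\nabla_X X\rangle=-\langle\nabla_v v,\gr p\rangle$; and one where it hits $\nabla^{\Psi}_X\dif\Psi(X)$, giving $-\int_M\langle v,\nabla^{\Psi}_{\partial_t}\nabla^{\Psi}_X\dif\Psi(X)\rangle\upsilon_g$ evaluated at $t=0$.

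The heart of the computation is the second term. Here I would commute the derivatives: since $[X,\partial_t]=0$,
$$
\nabla^{\Psi}_{\partial_t}\nabla^{\Psi}_X\dif\Psi(X)=\nabla^{\Psi}_X\nabla^{\Psi}_{\partial_t}\dif\Psi(X)+R^{\Psi}(\partial_t,X)\dif\Psi(X),
$$
and then use $\nabla^{\Psi}_{\partial_t}\dif\Psi(X)=\nabla^{\Psi}_{X}\dif\Psi(\partial_t)=\nabla^{\Psi}_X w_t$ again (the same symmetry that was used in the first-variation proof). At $t=0$ this gives $\nabla_X\nabla_X v + R(v,X)X$ inside the integrand, so that the second term contributes $-\int_M\langle v,\nabla_X\nabla_X v+R(v,X)X\rangle\upsilon_g$. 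For the first term I would rewrite $-\langle\nabla_v v,\gr p\rangle$ via the divergence identity $\di\big(p\,\nabla_v v\big)=\langle\gr p,\nabla_v v\rangle+p\,\di(\nabla_v v)$ and the pointwise identity $\di(\nabla_v v)=\tr\!\big((\nabla v)^2\big)+\langle v,\gr(\di v)\rangle+\Ric(v,v)-\ldots$; more cleanly, I would use that $\di v=0$ together with $\Hess_p(v,v)=v(v(p))-(\nabla_v v)(p)$ and the fact that $\int_M v(v(p))\upsilon_g=-\int_M (\di v)\,v(p)\,\upsilon_g=0$ to convert $-\int_M\langle\nabla_v v,\gr p\rangle\upsilon_g$ into $-\int_M\Hess_p(v,v)\,\upsilon_g$. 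Adding the two contributions yields exactly \eqref{var2}.

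The main obstacle I anticipate is bookkeeping rather than conceptual: one must keep rigorous track of which objects live at time $t$ versus time $0$ (in particular $w_t\neq v$ for $t\neq0$, so $\nabla^{\Psi}_{\partial_t}w_t|_{t=0}=\nabla_v v$ and not $0$), justify the commutation of $\partial_t$ and $X$ differentiations through the curvature term of $\nabla^{\Psi}$, and handle the boundary-free integrations by parts that turn $\gr p$ terms into a Hessian. A secondary subtlety is that the formula as stated presupposes the critical point equation $\nabla_X X=-\gr p$ is used \emph{after} the second differentiation (so that the $\nabla_v v$ term can be paired with $\gr p$); this is legitimate because $X$ is a critical point, but it should be stated explicitly. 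Once these points are handled, the computation is a direct two-step differentiation.
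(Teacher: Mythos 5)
Your proposal is correct and follows essentially the same route as the paper: differentiate the first-variation identity on $\Psi^{-1}TM$, commute $\nabla^{\Psi}_{\partial_t}$ with $\nabla^{\Psi}_X$ through the curvature term using $[X,\partial_t]=0$, and convert the pressure term by an integration by parts exploiting $\di v=0$ (the paper makes explicit the choice of $\psi_t$ as the flow of $v$, which is what justifies your claim $\nabla^{\Psi}_{\partial_t}w_t\vert_{t=0}=\nabla_v v$). Only fix one sign in the bookkeeping: the term produced by the differentiation is $-\int_M\langle\nabla_v v,\nabla_X X\rangle\,\upsilon_g=+\int_M\langle\nabla_v v,\gr p\rangle\,\upsilon_g$, and your identities $\Hess_p(v,v)=v(v(p))-(\nabla_v v)(p)$ and $\int_M v(v(p))\,\upsilon_g=0$ turn precisely this into $-\int_M\Hess_p(v,v)\,\upsilon_g$, whereas the quantity you wrote, $-\int_M\langle\nabla_v v,\gr p\rangle\,\upsilon_g$, equals $+\int_M\Hess_p(v,v)\,\upsilon_g$.
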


\begin{proof} Let $\Psi:(-\epsilon,\epsilon)\times M \to M$, $\Psi(t,x) = \psi_t(x)$ be, as before, the smooth deformation of $Id_M$ in $\SD_0(M)$ tangent to $v$ that gives the variation of $X$. By the first variation formula,
$$\frac{\dif}{\dif t}E(X_{t}) = - \int_M \langle \dif \Psi(\partial_t), \nabla_{X}^{\Psi} \dif \Psi(X) \rangle \upsilon_g.$$
Differentiation of this with respect to $t$ gives
\begin{equation*}
\begin{split}
\frac{\dif^2}{\dif t^2}E(X_t)= &- \int_M \frac{\dif}{\dif t}\langle  \dif \Psi(\partial_t), \nabla_{X}^{\Psi} \dif \Psi(X) \rangle \upsilon_g \\
= &- \int_M \left(\left\langle  \nabla_{\partial_t}^{\Psi} \dif \Psi(\partial_t), \nabla_{X}^{\Psi} \dif \Psi(X) \right\rangle + \left\langle  \dif \Psi(\partial_t), \nabla_{\partial_t}^{\Psi} \nabla_{X}^{\Psi} \dif \Psi(X) \right\rangle \right) \upsilon_g.
\end{split}
\end{equation*}
Let us compute the first integral term in the right-hand side of the above equation. Without loss of generality we may choose $\psi_t$ to be the flow of the divergence-free vector $v$ (i.e. $\dif \Psi(\partial_t)=v\circ \Psi$). Using the properties of the pull-back connection \cite{ud}, this term evaluated at $t = 0$ is $-\int_M \left \langle \nabla_v v, \nabla_{X}X \right\rangle \upsilon_g = \int_M \left \langle v, \nabla_v\nabla_{X}X \right\rangle \upsilon_g = -\int_M \Hess_p(v,v) \upsilon_g$ since $\nabla_{X} X=-\gr p$ by hypothesis and where we used again the fact that $v$ is divergence free.

As to the second summand, we have
\begin{equation*}
\begin{split}
\nabla_{\partial_t}^{\Psi} \nabla_{X}^{\Psi} \dif \Psi(X)=&  \nabla_{X}^{\Psi} \nabla_{\partial_t}^{\Psi} \dif \Psi(X)+ R(\dif \Psi (\partial_t), \dif \Psi (X))\dif \Psi(X)\\
(\textrm{since} \ [X, \partial_t]=0) =&  \nabla_{X}^{\Psi} \nabla_{X}^{\Psi} \dif \Psi(\partial_t)+ R(\dif \Psi (\partial_t), \dif \Psi (X))\dif \Psi(X)\\
(\textrm{evaluate at} \ t=0) \ =& \nabla_{X}\nabla_{X} v + R(v, X)X\,.
\end{split}
\end{equation*}
Combining with the computation of the first term, this yields the result.
\end{proof}

As usual, a critical point $X$ of our variational problem is called a \emph{local minimizer} of the energy or a \textit{stable solution} if $\frac{\dif^2}{\dif t^2}E(X_t)\bigl \vert _{t=0} \geq 0$.

\begin{re}
Using that $X$ is divergence-free, Eq.~\eqref{var2} is equivalent to:
$$\frac{\dif^2}{\dif t^2}E(X_t)\Bigl \vert _{t=0} =
\int_M \!\! \left\{\abs{\nabla_X v}^2 - K(v, X)(\abs{v}^2 \abs{X}^2-\langle v, X \rangle^2)-\mathrm{Hess}_p(v,v)\right\} \upsilon_g.$$
In particular, any steady Euler flow with constant pressure in a compact manifold of negative sectional curvature is a local minimizer of $E$.
\end{re}

In the rest of the section we shall apply the above general results to study the stability of the Reeb vector field $\xi$ on a Sasakian 3-manifold. First, we prove that in the presence of Beltrami fields tangent to the contact distribution, $\xi$ is not a local minimizer under appropriate $\DD$-homothetic rescalings:

\begin{lm} \label{destabxi}
Let $(M, \phi, \xi, \eta, g)$ be a compact Sasakian $3$-manifold. If there exists a curl eigenfield $v$ with $\mu>0$ tangent to the contact distribution, then the second variation of the energy of $\xi$ along $v$ reads:
$$\frac{\dif^2}{\dif t^2}E(\xi_t)\bigl \vert _{t=0} = \mu (\mu -2)\|v\|_{L^2}^2\,.$$
In particular, if $\mu < 2$ (that can always be achieved after a $\DD$-homothetic rescaling), then the Reeb vector field is an unstable critical point of the energy functional (and hence not a local minimizer).
\end{lm}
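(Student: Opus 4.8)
The plan is to feed the second variation formula \eqref{var2} with $X=\xi$ and then evaluate the integrand explicitly by means of the structure equations. Since $\xi$ is a Beltrami field, $\cu\xi=2\xi$, it is a critical point of $E$ on its $\SD(M)$-orbit whose pressure is $p=-\tfrac12\abs{\xi}^2=-\tfrac12$, hence constant, so that $\Hess_p\equiv 0$; moreover $v$ is divergence-free (differentiate along $\xi$ the last equation of \eqref{deig} and use \eqref{diverg}), so it is the variation vector of an admissible variation of $\xi$ through volume-preserving diffeomorphisms. Proposition~\ref{P:secondvar} then reduces the problem to computing
\begin{equation*}
\frac{\dif^2}{\dif t^2}E(\xi_t)\Bigl\vert_{t=0}=-\int_M\langle v,\,\nabla_\xi\nabla_\xi v+R(v,\xi)\xi\rangle\,\upsilon_g.
\end{equation*}

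First I would record the elementary fact that $v$ is rotated at constant speed along the Reeb flow: writing $v=f_1X_1+f_2X_2$ in an adapted orthonormal frame and combining the connection coefficients \eqref{conne} with the first two equations of \eqref{deig} gives $\nabla_\xi v=(\mu-1)\phi v$ (this is already noted in the proof of Proposition~\ref{bohn}), while the same computation applied to $\phi v$ gives $\nabla_\xi(\phi v)=-(\mu-1)v$; since $\mu$ is constant, iterating yields $\nabla_\xi\nabla_\xi v=-(\mu-1)^2v$. For the curvature term I would use the Sasakian identity $R(Y,\xi)\xi=Y-\eta(Y)\xi$, which can be read off from \eqref{ric3} together with the three-dimensional expression of the curvature tensor in terms of the Ricci tensor (or derived directly from $\nabla_Y\xi=-\phi Y$ and the fact that $\xi$ is Killing of unit norm); since $v\in\DD$ this gives $R(v,\xi)\xi=v$. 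Adding the two contributions, $\nabla_\xi\nabla_\xi v+R(v,\xi)\xi=\bigl(1-(\mu-1)^2\bigr)v=\mu(2-\mu)v$, and hence the second variation equals $-\mu(2-\mu)\norm{v}_{L^2}^2=\mu(\mu-2)\norm{v}_{L^2}^2$, as claimed.

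To reach the instability statement I would finally invoke Lemma~\ref{murescale}: on the $\DD$-homothetic deformation $(M,\xi',\eta',\phi',g')$ with constant $a$ the contact distribution is unchanged (as $\eta'=a\eta$), $\xi'$ is again a Beltrami field with eigenvalue $2$, and $v$ is a $\tfrac{\mu}{a}$-eigenfield of the associated curl operator, still tangent to $\DD$ and still divergence-free; applying the formula just obtained to this (again compact Sasakian) manifold shows that the second variation of $E(\xi')$ along $v$ equals $\tfrac{\mu}{a}\bigl(\tfrac{\mu}{a}-2\bigr)\norm{v}_{L^2(g')}^2$, which is strictly negative as soon as $a>\tfrac{\mu}{2}$, so that $\xi'$ is then an unstable critical point and a fortiori not a local minimizer. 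I do not expect any essential obstacle: the argument is a direct computation, and the only points demanding care are the sign bookkeeping in the two covariant derivatives along $\xi$ (in particular using that $\mu$ is constant so that it factors through $\nabla_\xi$) and checking that $v$ remains an admissible variation vector after the rescaling.
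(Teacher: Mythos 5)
Your proposal is correct and follows essentially the same route as the paper: use the second variation formula with constant pressure, compute $\nabla_\xi v=(\mu-1)\phi v$ (hence $\nabla_\xi\nabla_\xi v=-(\mu-1)^2v$) from \eqref{deig} and \eqref{conne}, use the Sasakian identity $R(v,\xi)\xi=v$, and conclude instability after a $\DD$-homothetic rescaling via Lemma~\ref{murescale}. The only (welcome) additions are your explicit checks that $v$ is divergence-free, hence an admissible variation vector, and that admissibility persists after the rescaling, which the paper leaves implicit.
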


\begin{proof}
As $\xi$ is a geodesic (and Beltrami) vector field, it is obviously a critical point of the energy in its $\SD(M)$ orbit, with constant pressure $p$. Using the notation introduced in Section 2, let $v=f_1 X_1 + f_2 X_2$ be a $\mu$-eigenfield of $\cu$. Then, from Eq.~\eqref{deig} and the structure equations we obtain $\nabla_{\xi} v = (\mu -1) (f_2 X_1 - f_1 X_2)$. On the other hand, since on any Sasakian manifold the identity $R(X,\xi)Y=\eta(Y)X- g(X,Y)\xi$ holds for any $X,Y$, we see that $R(v,\xi)\xi = v$. Accordingly, the second variation of the energy \eqref{var2} at $\xi$ for a 1-parameter variation tangent to $v$ reads:
$\frac{\dif^2}{\dif t^2}E(\xi_t)\bigl \vert _{t=0} = \mu (\mu -2)\|v\|_{L^2}^2$. Therefore $\frac{\dif^2}{\dif t^2}E(\xi_t)\bigl \vert _{t=0} <0$, and $\xi$ cannot be a local minimizer if $0< \mu <2$. Finally, cf. Lemma~\ref{murescale}, one can always choose a $\DD$-homothetic rescaling of the metric such that $\mu <2$.
\end{proof}

In the following theorem we characterize the Sasakian manifolds such that $\xi$ is a minimizer of the energy as those manifolds where the first positive eigenvalue of curl is $\mu_1=2$. Together with Lemma~\ref{destabxi}, this reduces the Question stated in Section~\ref{S:intro} to studying the eigenvalues of the curl operator and the existence of eigenfields tangent to the contact distribution. This will be key for the results we obtain in forthcoming sections. In particular, this proves the first claim of Theorem~\ref{T:main2} stated in the Introduction.

\begin{te}\label{minfirst}
 Let $(M, \phi, \xi, \eta, g)$ be a compact Sasakian $3$-manifold. Then the Reeb vector field $\xi$ is a minimizer of the energy in its $\SD(M)$-orbit if and only if the first positive eigenvalue of $\cu$ is $\mu_1=2$.
\end{te}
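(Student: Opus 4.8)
The plan is to prove both implications by exploiting the spectral decomposition of vector fields with respect to the curl operator, combined with the second variation formula from Proposition~\ref{P:secondvar}. Since $\xi$ is Beltrami (indeed Killing and geodesic), it is a critical point with constant pressure $p$, so the Hessian term in \eqref{var2} drops out and on any Sasakian manifold the curvature identity $R(v,\xi)\xi = v - \eta(v)\xi$ together with $\nabla_\xi$-computations (as in the proof of Lemma~\ref{destabxi}) lets us reduce the second variation to a manageable quadratic form in $v$.

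For the ``only if'' direction I would argue by contraposition. Assume $\mu_1 \neq 2$; since $\xi$ is always a $2$-eigenfield, the spectrum of $\cu$ then contains a positive eigenvalue $\mu_1 < 2$, necessarily with an eigenfield not proportional to $\xi$. By Proposition~\ref{mag}, such an eigenfield with $\mu_1 \neq 2$ and $\mu_1 \leq 0$ excluded must have $\eta(X)$ an eigenfunction of the Laplacian with eigenvalue $\mu_1(\mu_1-2) < 0$, which forces $\eta(X) \equiv 0$, i.e. $X \in \DD$. Then Lemma~\ref{destabxi} gives $\frac{\dif^2}{\dif t^2}E(\xi_t)|_{t=0} = \mu_1(\mu_1-2)\|X\|_{L^2}^2 < 0$, so $\xi$ is not even a local minimizer, let alone a global one. (The only remaining case $\mu_1 > 2$ means $2$ is not positive-minimal, which contradicts $\cu\xi = 2\xi$; so in fact $\mu_1 \leq 2$ always holds once $\xi \neq 0$, and the only way to fail $\mu_1 = 2$ is to have an eigenfield in $\DD$ with eigenvalue below $2$.)

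For the ``if'' direction, assume $\mu_1 = 2$. Here I would use the helicity-bounds-energy inequality in spectral form: expand the variation field $\xi_t$ (which stays in the fixed $L^2$-orthogonal spectral decomposition of $\Gamma_0(TM)$ into $\cu$-eigenspaces, since $\cu$ is self-adjoint with discrete spectrum on a closed $3$-manifold by \cite{baer}). Because volume-preserving diffeomorphisms preserve helicity $\int_M \langle \cu^{-1}\xi_t, \xi_t\rangle\,\upsilon_g$ (the $L^2$ pairing with the curl-inverse, on the co-exact part) and $\xi_t$ remains co-closed and cohomologous in the appropriate sense to $\xi$, the helicity of $\xi_t$ equals that of $\xi$, namely $\tfrac12\|\xi\|_{L^2}^2$. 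Writing $\xi_t = \sum_k c_k e_k$ with $\cu e_k = \mu_k e_k$ and $\|e_k\|_{L^2}=1$, helicity is $\sum_k \mu_k^{-1} c_k^2$ and energy is $\tfrac12\sum_k c_k^2$; the constraint $\sum_k \mu_k^{-1}c_k^2 = \tfrac12\|\xi\|_{L^2}^2$ together with $\mu_k^{-1} \leq \tfrac12$ for every positive eigenvalue (this is exactly $\mu_1 = 2$, handling the kernel of $\cu$ separately via its trivial helicity contribution) yields $\tfrac12\sum_k c_k^2 \geq \sum_k \mu_k^{-1}c_k^2 = \tfrac12\|\xi\|_{L^2}^2$, i.e. $E(\xi_t) \geq E(\xi)$.

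The main obstacle I expect is the careful bookkeeping in the ``if'' direction: one must verify that helicity is genuinely invariant under the $\SD_0(M)$-action on $\xi_t$ (this is classical, but needs the fact that $\xi$, being exact-curl of $\tfrac12\eta$, lies in the image of $\cu$, so its Biot–Savart potential is well-defined and the cross-helicity argument applies), and one must handle the harmonic part of $\xi_t$ — the kernel of $\cu$ — which contributes to energy but not to helicity. On a rational homology sphere this part vanishes; in general one argues that the harmonic component is also fixed along the flow (pullback by a diffeomorphism isotopic to the identity preserves the de Rham cohomology class, and the harmonic representative is unique), so it contributes an equal constant to $E(\xi_t)$ and $E(\xi)$ and can be subtracted off. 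Once these two points are pinned down, the inequality is immediate from the eigenvalue bound, and combined with Lemma~\ref{destabxi} the characterization is complete.
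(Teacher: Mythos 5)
Your proposal is correct and follows essentially the same route as the paper: the sufficiency of $\mu_1=2$ via invariance of helicity and the spectral expansion in curl eigenfields, and the necessity via Proposition~\ref{mag} (forcing any eigenfield with $0<\mu<2$ into $\DD$) combined with the instability computation of Lemma~\ref{destabxi}. The only difference is your extra care with the harmonic component of $\xi_t^\flat$, which is actually unnecessary: it contributes nothing to helicity and only adds a nonnegative amount to $E(\xi_t)$, so the inequality holds without tracking it (and the paper's chain of estimates absorbs it in the step $\sum_{i\geq1}c_i^2\leq\norm{\theta}_{L^2}^2$).
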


\begin{proof}  The fist implication is a direct consequence of the fact~\cite{arn, ghri} that the energy $E$ of an exact\footnote{meaning that $\imath_X \upsilon_g$ is an exact 2-form} divergence-free vector field $X$ is lower bounded by $\frac{1}{2}\mu_1\big(\cu^{-1}X, X\big)_{L^2}$ where the $L^2$ inner product term is called the \textit{helicity of} $X$ and is an invariant of the $\SD(M)$-orbit of $X$~\cite{arn,PS}. Indeed, suppose that the first positive eigenvalue of $\cu$ is $\mu_1=2$. Let $T$ be a vector field in the adjoint orbit of $\xi$ under the action of $\SD(M)$. As is well known~\cite{arn, PS} this implies that the helicities of $\xi$ and $T$ are the same. Since $(\ast \dif)^{-1}$ is a compact operator defined on the space of co-closed 1-forms $L^2$-orthogonal to the kernel of $\ast \dif$, there exists a basis of eigenforms $\{\alpha_I \}_{I\in \ZZ}$: $(\ast \dif)^{-1}\alpha_I = \frac{1}{\mu_I}\alpha_I$, such that $...\leq \mu_{-2}\leq \mu_{-1} < 0 < \mu_{1} \leq \mu_{2} \leq ...$.  Expanding $T^\flat=\theta=\sum_I c_I\alpha_I$, we get (recall that the energy is half the $L^2$-norm)
\begin{equation*}
\begin{split}
& 0 < E(\xi)=\left((\ast \dif)^{-1} \eta, \eta \right)_{L^2}=\left((\ast \dif)^{-1} \theta, \theta \right)_{L^2} = \sum_{i \geq 1} \frac{c_i^2}{\mu_i}+\sum_{i \geq 1} \frac{c_{-i}^2}{\mu_{-i}} \\
& \leq \sum_{i \geq 1} \frac{c_i^2}{\mu_i} \leq \frac{1}{\mu_1}\sum_{i \geq 1} c_i^2 \leq \frac{1}{\mu_1}\norm{\theta}_{L^2}^2 = E(T)\,,
\end{split}
\end{equation*}
where in the last equality we have used that $\mu_1=2$. This implies that $\xi$ is indeed a minimizer in its $\SD(M)$-orbit.

Conversely, assume that $\xi$ is a minimizer in its $\SD(M)$-orbit. Let $\mu>0$ be an eigenvalue of $\cu$ with $X$ an associated eigenfield. If $X \in \DD$, then according to Lemma~\ref{destabxi} it follows that $\mu \geq 2$ since $\xi$ is a minimizer.  If $X \notin \DD$, then Proposition \ref{mag} applies showing also that $\mu \geq 2$.
\end{proof}

This theorem and Proposition~\ref{P:imp} easily imply the following corollary. The statement is the same as Theorem~\ref{T:main1} up to the existence of curl eigenfields tangent to the contact distribution.

\begin{co}\label{co:main}
Let $(M, \xi, \eta, \phi, g)$ be a compact Sasakian $3$-manifold. Assume that the positive spectrum of the curl operator acting on fields tangent to the contact distribution is non-empty, and let us denote the smallest eigenvalue of such a set by $\mu_1^\DD$. Then the Reeb field of the $\mathcal{D}$-homothetic deformation of the manifold with constant $a$ is an energy minimizer in its adjoint orbit if and only if $a\leq \frac{\mu^\DD_1}{2}$. In fact, for $a>\frac{\mu_1^\DD}{2}$ the Reeb field is an unstable critical point of the energy functional under the action of volume preserving diffeomorphisms.
\end{co}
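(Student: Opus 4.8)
The plan is to deduce this corollary by transporting Theorem~\ref{minfirst} and Proposition~\ref{P:imp} to the $\DD$-homothetically deformed manifold and invoking Lemma~\ref{destabxi} for the instability assertion. The key preliminary observation is that the $\DD$-homothetic deformation $(M,\xi^\prime,\eta^\prime,\phi^\prime,g^\prime)$ with constant $a>0$ of Definition~\ref{d1} is again a compact Sasakian $3$-manifold, and that its contact distribution is unchanged: $\Ker\eta^\prime=\Ker(a\eta)=\DD$. Hence ``tangent to the contact distribution'' has the same meaning before and after the deformation, and by Lemma~\ref{murescale} the positive spectrum of the curl operator of $g^\prime$ acting on fields tangent to $\DD$ is exactly $\{\mu/a : \mu\in\Spec_\DD(\cu)\}$; in particular it is non-empty, with smallest element $\mu_1^\DD/a$.

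With this in place, I expect the equivalence to follow immediately. Applying Theorem~\ref{minfirst} to the compact Sasakian manifold $(M,\xi^\prime,\eta^\prime,\phi^\prime,g^\prime)$, its Reeb field $\xi^\prime=a^{-1}\xi$ is an energy minimizer in its $\SD(M)$-orbit if and only if the first positive eigenvalue of $\cu^\prime$ equals $2$. But this is precisely the situation analysed in Proposition~\ref{P:imp}: since $(M,\xi,\eta,\phi,g)$ admits curl eigenfields tangent to $\DD$, the first positive eigenvalue of the curl operator on the $\DD$-homothetic deformation with constant $a$ equals $2$ if and only if $a\leq\mu_1^\DD/2$. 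Combining the two statements gives the claimed equivalence.

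For the final, sharper assertion, I would argue as follows. Suppose $a>\mu_1^\DD/2$ and let $v$ be a curl eigenfield of $g$ tangent to $\DD$ with eigenvalue $\mu_1^\DD$. By Lemma~\ref{murescale}, $v$ is a curl eigenfield of $g^\prime$ tangent to $\DD$ with eigenvalue $\mu^\prime:=\mu_1^\DD/a$, and $0<\mu^\prime<2$ exactly because $a>\mu_1^\DD/2$. Applying Lemma~\ref{destabxi} to the compact Sasakian manifold $(M,\xi^\prime,\eta^\prime,\phi^\prime,g^\prime)$ and to $v$, the second variation of the energy of $\xi^\prime$ along $v$ equals $\mu^\prime(\mu^\prime-2)\|v\|_{L^2}^2<0$; hence $\xi^\prime$ is an unstable critical point of the energy functional under the action of volume-preserving diffeomorphisms, and in particular not a local minimizer (let alone a global one). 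This also re-proves the ``only if'' direction of the equivalence.

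I do not anticipate a genuine obstacle here beyond bookkeeping: one only has to keep track of the fact that a $\DD$-homothetic deformation leaves $\DD$ invariant, rescales the $\DD$-eigenvalues of curl by $1/a$ while fixing the eigenvalue $2$ of the Reeb field, and preserves the compact Sasakian structure, so that Theorem~\ref{minfirst}, Proposition~\ref{P:imp} and Lemma~\ref{destabxi} all apply verbatim to the deformed manifold.
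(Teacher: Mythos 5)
Your proposal is correct and follows essentially the same route as the paper: the corollary is obtained there by combining Theorem~\ref{minfirst} with Proposition~\ref{P:imp} (via Lemma~\ref{murescale}) for the equivalence, and the instability for $a>\mu_1^\DD/2$ comes from Lemma~\ref{destabxi} applied to the deformed Sasakian structure, exactly as you argue.
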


\section{Proof of Theorem~\ref{T:main1} for quasi-regular Sasakian 3-manifolds}\label{S:qreg}

We recall that a Sasakian manifold is quasi-regular if the flow of the Reeb field induces a locally free $\mathbb S^1$-action; in particular all orbits  are compact. If the action is free then the Sasakian manifold is called \textit{regular}. This implies that $M$ is the total space of a $\mathbb S^1$-bundle over a 2-orbifold (or a Riemann surface in the regular case).

In this section we prove Theorem~\ref{T:main1} for Sasakian 3-manifolds that are \textit{quasi-regular}. To this end, in view of Theorem~\ref{minfirst}, we have to study the first positive eigenvalue $\mu_1$ of $\cu$. Since the eigenvalue of a Beltrami field that is not tangent to the contact distribution is necessarily $\geq 2$ (if positive), cf. Proposition~\ref{mag}, it is enough to fix our attention on curl eigenfields tangent to the contact distribution. If such fields exist, then Corollary~\ref{co:main} implies that for any $a>a_0:=\frac{\mu_1^\DD}{2}$, the Reeb field of the $\mathcal{D}$-homothetic deformation of the manifold with constant $a$ is unstable, and hence not an energy minimizer, while it is a minimizer provided that $a\leq a_0$. We mention that the related problem of describing the spectrum and the eigenspaces of the Hodge Laplacian on the total space of a circle bundle over a Hodge manifold has been discussed in~\cite{nag0, nag}.

The main result of this section is to establish that the aforementioned Beltrami fields tangent to the contact distribution indeed exist on any quasi-regular Sasakian 3-manifold:
\begin{te}\label{genexist}
Let $(M, \xi, \eta, \phi, g)$ be a compact quasi-regular Sasakian 3-manifold. Then there exists a sequence $\{\mu_k\}\nearrow \infty$ of positive eigenvalues of the curl operator with associated (nontrivial) $\mu_k$-eigenfields tangent to the contact distribution.
\end{te}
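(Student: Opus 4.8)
The plan is to exploit the quasi-regularity to reduce the eigenvalue problem for $\cu$ on vector fields tangent to $\DD$ to a spectral problem on the base 2-orbifold, where compactness of the relevant operator will produce the desired infinite sequence of eigenvalues. Since $M$ is quasi-regular, it is the total space of a Seifert $\Ss^1$-bundle $\pi: M \to B$ over a compact 2-orbifold $B$, with $\eta$ a connection form whose curvature is $\dif\eta = 2\,\pi^*\omega_B$, where $\omega_B$ is (a multiple of) the orbifold Kähler form of $B$. Sections of $\DD$ can be described via the splitting $TM = \RR\xi \oplus \DD$ and the identification of $\DD$ with $\pi^*(TB)$; the complex structure induced by $\phi$ turns $\DD$ into a complex line bundle over $M$ pulled back from $B$. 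A vector field $X = f_1 X_1 + f_2 X_2 \in \DD$ is naturally encoded by the complex function $u = f_1 + \ii f_2$, and the $\cu$-eigenfield equations \eqref{deig} become: a first-order linear ODE along the $\xi$-orbits (the first two equations, which say $\xi(u) = -\ii(\mu - C_0 - 1)u$) together with a horizontal first-order condition (the third equation, a $\bar\partial$-type constraint coupling $u$ to the connection).

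The key step is a Fourier decomposition along the circle fibers. Because $\xi$ generates a locally free $\Ss^1$-action, $L^2$-sections of $\DD$ (or of the associated line bundle) decompose into isotypic components indexed by the characters $e^{\ii m\theta}$, $m \in \ZZ$ (in the orbifold case $m$ runs over those integers compatible with the local uniformizing groups). On the $m$-th isotypic piece $\xi$ acts as multiplication by $\ii m$ (up to the orbifold normalization), so the first two equations of \eqref{deig} force $\mu = C_0 + 1 + m$ on that component — and, recalling that $C_0$ is essentially determined by the structure and is constant on the regular part when the base data is fixed (more precisely $\xi(C_0)$ is controlled and the relevant combination is fiber-constant), this pins down a candidate eigenvalue for each admissible $m$. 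The remaining equation becomes, on each isotypic component, an elliptic first-order equation on $B$ — a twisted Cauchy--Riemann / $\bar\partial$-equation for a section of a line bundle $L_m = L^{\otimes m}$ over the orbifold $B$ (with $L$ the line bundle of the Seifert fibration). Its kernel is finite-dimensional and, crucially, \emph{nonzero} for all sufficiently large $m$: this is a Riemann--Roch / vanishing-theorem statement on the compact orbifold Riemann surface $B$, since the degree of $L_m$ grows linearly in $m$ while the genus/orbifold-Euler-characteristic corrections are fixed, so $\dim H^0(B, L_m) \to \infty$. Each such holomorphic section produces a genuine $\cu$-eigenfield tangent to $\DD$ with eigenvalue $\mu_m \to \infty$.

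Assembling these: for each large admissible $m$ we obtain at least one nontrivial eigenfield $X_m \in \DD$ with $\cu X_m = \mu_m X_m$ and $\mu_m \to \infty$, giving the sequence $\{\mu_k\} \nearrow \infty$; one should also check $\mu_m > 0$, which holds once $m$ is large enough to beat the bounded quantity $C_0 + 1$. Two technical points need care. First, the orbifold setting: at the finitely many singular fibers the local $\Ss^1$-action has nontrivial isotropy, so one must work with orbifold line bundles and the orbifold Riemann--Roch theorem (Kawasaki's theorem), and only those Fourier modes $m$ satisfying the local compatibility conditions contribute — but there are infinitely many such $m$, which suffices. Second, the possible non-constancy of $C_0$ (equivalently, the $\phi$-sectional curvature data $C_1, C_2$): one must verify that the separation of variables is clean, i.e. that the operator genuinely respects the fiber-Fourier decomposition and that the ``eigenvalue'' $C_0 + 1 + m$ is actually fiber-constant. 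The structure equations \eqref{struct}--\eqref{intcond} should give this: $\xi$ Killing and the integrability conditions constrain how $C_0$ varies, and in the quasi-regular case $C_0$ descends to (is pulled back from) $B$. The \textbf{main obstacle} I expect is precisely this orbifold bookkeeping — making the reduction to $B$ rigorous at the singular fibers and identifying the correct orbifold line bundle $L_m$ so that the growth of $\dim H^0(B, L_m)$ can be invoked — rather than any of the analytic estimates, which are standard once the separation of variables is set up.
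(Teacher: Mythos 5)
Your overall route is in fact the paper's: reduce along the Seifert fibration, convert the tangential eigenfield equations into a holomorphic-section problem on the base orbifold twisted by powers of the line bundle of the fibration, and produce nontrivial solutions for infinitely many twists via orbifold Riemann--Roch plus a vanishing theorem. However, two steps as you wrote them do not go through and are precisely what the paper's Proposition~\ref{eigencomplex} is designed to repair. First, the separation of variables: the relation $\mu = C_0+1+m$ presupposes that $C_0$ is fiber-constant (or pulled back from the base), and this is not true in general. $C_0$ is a frame-dependent quantity (it records the rotation of the chosen local orthonormal frame $X_1,X_2$ under $\nabla_\xi$), not an invariant of the Sasakian structure; for a flow-invariant (projectable) frame one would get $C_0\equiv -1$, but such frames exist only locally and away from the singular fibers, so a global Fourier decomposition of $u=f_1+\ii f_2$ is not well defined. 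The fix is to pass to the invariant complex object: the $(1,0)$-form $\omega=\alpha-\ii\,\alpha\circ\phi$ satisfies exactly $\mathcal{L}_\xi\omega=-\ii\mu\,\omega$ and $\ov{\partial}\omega=0$, so the fiber weight is $\mu$ itself; after a $\DD$-homothetic normalization making the typical orbit have length $2\pi$ (harmless for the statement, by Lemma~\ref{murescale}), integration along a closed regular orbit gives $\mu\in\ZZ$, with no $C_0$ appearing anywhere.

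Second, the cohomology group: with the orientation convention used in the paper one has $c_1(L)<0$, so $H^0(B,\Oo(L^{\otimes m}))=0$ for all $m>0$ by the orbifold Kodaira vanishing theorem, and the claim ``$\dim H^0(B,L_m)\to\infty$'' is made about the wrong space. The equivariant holomorphic datum attached to a tangential eigenfield is a holomorphic section of $K_{\Sigma_g}\otimes L^{-\mu}$ (note the canonical-bundle twist), i.e.\ by Serre duality an element of $H^1(\Sigma_g,\Oo(L^{\mu}))$, and it is this dimension, $-1+g-\mu\, c_1(L)+\sum_j b_j(L^{\mu})/a_j$, that Kawasaki's Riemann--Roch shows grows without bound as $\mu\to\infty$. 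You flag both issues yourself as the ``main obstacle,'' so the diagnosis is accurate, but as written the proposal needs the invariant reformulation and the correct bundle identification before the Riemann--Roch count can be invoked; with those repairs it coincides with the paper's argument.
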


Before proving this result we need to show some preliminary properties. Recall that eigenfields tangent to the contact distribution are characterized by Eqs.~\eqref{deig}, which easily yields the following:
\begin{lm}\label{phix}
If $X \in \DD$ is a $\mu$-eigenfield of $\cu$, then so is $\phi X$.
\end{lm}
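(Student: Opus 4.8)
The plan is to verify the characterization \eqref{deig} of contact-tangent $\cu$-eigenfields directly for $\phi X$. First I would write $X = f_1 X_1 + f_2 X_2 \in \DD$ and use $\phi X_1 = -X_2$, $\phi X_2 = X_1$ to obtain $\phi X = f_2 X_1 - f_1 X_2$; in particular $\phi X$ again lies in $\DD$, with components $(g_1, g_2) = (f_2, -f_1)$ in the frame $\{X_1, X_2\}$. It then suffices to check that the triple $(g_1, g_2)$ solves the three equations \eqref{deig} with the \emph{same} value of $\mu$.

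The first two equations are immediate from those for $X$: substituting $(g_1, g_2) = (f_2, -f_1)$ turns $\xi(g_1) = (\mu - C_0 - 1) g_2$ into the second equation of \eqref{deig} for $X$, and $\xi(g_2) = -(\mu - C_0 - 1) g_1$ into (minus) the first. For the third equation, the same substitution gives
\[
X_1(g_2) - X_2(g_1) - C_1 g_1 - C_2 g_2 = -\big( X_1(f_1) + X_2(f_2) - C_2 f_1 + C_1 f_2 \big) = -\di X,
\]
by the divergence formula \eqref{diverg}. Now I would invoke the fact recorded just after \eqref{diverg} — obtained by differentiating the third equation of \eqref{deig} along $\xi$ and using the integrability conditions \eqref{intcond} — that any $\cu$-eigenfield tangent to $\DD$ is divergence-free. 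Hence the right-hand side vanishes, and $\phi X$ satisfies all of \eqref{deig}, i.e. $\cu(\phi X) = \mu\, \phi X$.

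I do not expect a genuine obstacle here: the lemma reduces to a single substitution together with the already-established identity $\di X = 0$. The only subtlety worth flagging is that the third equation of \eqref{deig} for $\phi X$ is precisely the vanishing of $\di X$ rather than the third equation for $X$, so the divergence-free property is really being used and the conclusion is not purely formal from \eqref{deig} applied to $X$.
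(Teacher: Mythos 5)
Your proof is correct and is exactly the verification the paper intends: the paper gives no written proof, asserting only that Lemma~\ref{phix} ``easily'' follows from Eqs.~\eqref{deig}, and your substitution $(g_1,g_2)=(f_2,-f_1)$ together with the already-recorded fact $\di X=0$ is precisely that argument. Your remark that the third equation for $\phi X$ reduces to $\di X=0$ (rather than to the third equation for $X$) is an accurate and worthwhile observation about where the divergence-free property enters.
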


This simple remark opens the way of introducing holomorphic ideas into the play. We first recall some terminology. Given a Sasakian manifold $(M, \phi, \xi, \eta, g)$, the complexified tangent bundle admits a natural splitting:
$$
T^{\mathbb{C}}M=T^{0}M \oplus T^{(1,0)}M \oplus T^{(0,1)}M\,,
$$
where $T^{(1,0)}M=\{X-\ii \phi X \mid X \in \Gamma(\mathcal{D})\}$, $T^{(0,1)}M=\ov{T^{(1,0)}M}$
and $T^{0}M=Span_{\mathbb{C}}\{\xi\}$ are the eigenspaces of $\phi$ corresponding to
the eigenvalues $\ii, -\ii$ and $0$, respectively. Accordingly a complex valued $p$-form $\omega$ on $M$ is a $(p,0)$-form if $\imath_{T^{(0,1)}M}\omega =0$, and similarly a complex valued $q$-form is a $(0,q)$-form if $\imath_{T^{(1,0)}M}\omega =0$ and $\imath_{\xi}\omega =0$; the corresponding bundles will be denoted by $\Lambda^{p,0} M$ and $\Lambda^{0,q} M$, and their wedge product by $\Lambda^{p,q} M$. We add the index $\DD$ when considering only \emph{horizontal forms}, i.e. $\imath_{\xi}\omega =0$. The differential operator restricted to $\Lambda_{\DD}^{p,q} M$ admits the decomposition $d = \partial + \ov{\partial} \ (\textrm{mod} \ \eta)$ into its $\Lambda_{\DD}^{p+1,q} M$ and  $\Lambda_{\DD}^{p,q+1} M$ components. Finally, recall that $K_M = \Lambda^{(\dim M+1)/2,0} M$ is the \textit{canonical CR bundle} of $M$. Since the dimension of $M$ is 3, $K_M = \eta \wedge \Lambda_{\DD}^{1,0}M$ is the bundle of 2-forms vanishing on  $T^{(0,1)}M$; the closed sections in $K_M$ are called holomorphic (2,0)-forms \cite{biq}. The following proposition is key in order to prove Theorem~\ref{genexist}. It characterizes curl eigenfields tangent to the contact distribution in terms of complex-valued vector fields or forms:
\begin{pr}\label{eigencomplex}
Let $(M, \phi, \xi, \eta, g)$ be a compact Sasakian 3-manifold and $X \in \DD$ be a vector field tangent to the contact distribution. Let $\alpha = X^\flat$ be its dual 1-form. Consider the complex vector field $Z = X - \ii \phi X \in T^{(1,0)}M$ and the $(1,0)$-form $\omega:= \alpha - \ii \alpha \circ \phi \in \Lambda_{\DD}^{1,0}M$, which is the dual of $\ov Z$. Then the following statements are equivalent:
\begin{enumerate}
\item $X$ is a $\mu$-eigenfield of $\cu$.
\item The complex vector field $Z$ satisfies
\begin{equation} \label{deigcomplex}
\begin{split}
&\nabla_\xi \ov{Z}=-\ii(\mu -1)\ov{Z}\,, \\
&\nabla_{\ov{W}} \ov{Z}= 0, \quad \forall W \in T^{(1,0)}M\,. \\
\end{split}
\end{equation}

\item The $(1,0)$-form $\omega$ satisfies:
\begin{equation}\label{domeg}
d\omega = - \ii \mu \, \eta \wedge \omega.
\end{equation}

\item The $(1,0)$-form $\omega$ satisfies:
\begin{equation}  \label{deigcomplex1}
\begin{split}
&\mathcal{L}_\xi \omega = - \ii \mu \, \omega\,, \\
&\ov{\partial} \omega = 0\,. \\
\end{split}
\end{equation}

\item The $2$-form $\varpi = \eta \wedge \omega$ is a holomorphic form in the canonical bundle $K_M$ such that \ $\ii \mathcal{L}_\xi\varpi=\mu \varpi$.
\end{enumerate}
\end{pr}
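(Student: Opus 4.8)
The plan is to package the two components of $X$ into a single complex-valued local function and to show that \emph{all five} statements $(1)$--$(5)$ reduce to the \emph{same} pair of first-order equations on that function; the equivalences then follow by inspection.

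First I would fix an adapted orthonormal frame $\{\xi, X_1, X_2\}$ with dual coframe $\{\eta, \omega_1, \omega_2\}$ as in Section~\ref{S:2}, so $\phi X_1 = -X_2$, $\phi X_2 = X_1$, and write $X = f_1 X_1 + f_2 X_2$. Setting $u := f_1 + \ii f_2$, $\varepsilon := \omega_1 - \ii \omega_2$, $e := X_1 - \ii X_2$ and $\gamma := C_2 + \ii C_1$, a direct computation from the definitions gives the dictionary
$$\ov Z = u\, e, \qquad \omega = u\, \varepsilon, \qquad \varpi = u\, \eta \wedge \varepsilon, \qquad \alpha = \tfrac12 (\omega + \ov\omega),$$
where $e$ spans $T^{(0,1)}M$, $\varepsilon$ spans $\Lambda^{1,0}_{\DD} M$ and $\eta\wedge\varepsilon$ spans $K_M$; in particular $\omega$ is a horizontal $(1,0)$-form, $\varpi \in K_M$, and $\omega$ is the metric dual of $\ov Z$, as asserted in the statement. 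Then I would record the structural identities that carry the whole argument, obtained by inserting the structure equations~\eqref{struct} and the connection formulas~\eqref{conne} into these complex combinations:
$$\nabla_\xi e = -\ii C_0\, e, \qquad \nabla_e e = -\gamma\, e, \qquad \dif\varepsilon = -\ii(C_0+1)\,\eta\wedge\varepsilon + \tfrac{\gamma}{2}\,\varepsilon\wedge\ov\varepsilon,$$
together with $\dif\eta = -\ii\,\varepsilon\wedge\ov\varepsilon$, $\imath_\xi\varepsilon = 0$ and $\mathcal{L}_\xi\eta = 0$. (Since $T^{(1,0)}M$ is a complex line bundle spanned by $\ov e$, the identity $\nabla_e e = -\gamma e$ is all that is needed to treat $\nabla_{\ov W}\ov Z = 0$ for every $W \in T^{(1,0)}M$, which reduces to $\nabla_e \ov Z = 0$.) Using that $\Lambda^{2,0}_{\DD}M = \Lambda^{0,2}_{\DD}M = 0$ in dimension $3$ and that $\varepsilon\wedge\ov\varepsilon\wedge\varepsilon = 0$, these identities produce the one computation used everywhere below:
$$\dif\omega = \bigl(\xi(u) - \ii(C_0+1)u\bigr)\,\eta\wedge\varepsilon + \tfrac12\bigl(\gamma u - e(u)\bigr)\,\varepsilon\wedge\ov\varepsilon.$$

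The core claim is that each of $(1)$--$(5)$ is equivalent to the conjunction of (A) $\xi(u) = -\ii(\mu - C_0 - 1)\, u$ and (B) $e(u) = \gamma\, u$. Splitting into real and imaginary parts, (A) is exactly the first two scalar equations of~\eqref{deig}, while (B) is the third scalar equation of~\eqref{deig} together with $\di X = 0$ written as in~\eqref{diverg}; since a solution of~\eqref{deig} is automatically divergence-free, $(1)$ is equivalent to (A) and (B). For $(2)$, substituting $\ov Z = u\, e$ into $\nabla_\xi\ov Z = -\ii(\mu-1)\ov Z$ and $\nabla_e\ov Z = 0$ and using $\nabla_\xi e = -\ii C_0 e$, $\nabla_e e = -\gamma e$ produces exactly (A) and (B). For $(3)$, comparing the formula for $\dif\omega$ above with $-\ii\mu\,\eta\wedge\omega = -\ii\mu\, u\,\eta\wedge\varepsilon$ coefficient by coefficient gives (A) (from $\eta\wedge\varepsilon$) and (B) (from $\varepsilon\wedge\ov\varepsilon$). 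For $(4)$: since $\omega$ is horizontal, $\mathcal{L}_\xi\omega = \imath_\xi\dif\omega = \bigl(\xi(u) - \ii(C_0+1)u\bigr)\varepsilon$, so $\mathcal{L}_\xi\omega = -\ii\mu\omega$ is (A); and $\ov\partial\omega$ is by definition the $\Lambda^{1,1}_{\DD}M$-component of $\dif\omega$ modulo $\eta$, i.e. $\tfrac12(\gamma u - e(u))\,\varepsilon\wedge\ov\varepsilon$, so $\ov\partial\omega = 0$ is (B). For $(5)$: $\dif\varpi = \dif\eta\wedge\omega - \eta\wedge\dif\omega = -\eta\wedge\dif\omega = \tfrac12(e(u) - \gamma u)\,\eta\wedge\varepsilon\wedge\ov\varepsilon$ (the term $\dif\eta\wedge\omega$ vanishes because $\varepsilon\wedge\ov\varepsilon\wedge\varepsilon = 0$), so $\varpi$ is closed, i.e. holomorphic, iff (B); and $\mathcal{L}_\xi\varpi = \eta\wedge\mathcal{L}_\xi\omega$ since $\mathcal{L}_\xi\eta = 0$, whence $\ii\mathcal{L}_\xi\varpi = \mu\varpi$ iff $\ii\mathcal{L}_\xi\omega = \mu\omega$ (wedging a horizontal $1$-form with $\eta$ is injective) iff (A). This closes all the equivalences.

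Everything above is routine bookkeeping with the complex frame, so I do not expect a serious obstacle. The one point that genuinely needs care is that condition (B), $e(u) = \gamma u$, simultaneously encodes the horizontal component of the $\cu$-eigenvalue equation \emph{and} the constraint $\di X = 0$, so identifying it with~\eqref{deig} is legitimate only because --- as noted right after~\eqref{deig} --- a solution of~\eqref{deig} is necessarily divergence-free. A secondary thing to pin down is that the $\partial/\ov\partial$-decomposition of $\dif$ on horizontal forms and the identification $K_M = \eta\wedge\Lambda^{1,0}_{\DD}M$ match the orthonormal-frame conventions of Section~\ref{S:2}; both are immediate since $\dim\DD = 2$.
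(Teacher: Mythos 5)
Your proposal is correct: I checked the dictionary ($\ov Z = u\,e$, $\omega = u\,\varepsilon$, $\varpi = u\,\eta\wedge\varepsilon$), the structural identities $\nabla_\xi e = -\ii C_0\, e$, $\nabla_e e = -\gamma\, e$, $\dif\varepsilon = -\ii(C_0+1)\,\eta\wedge\varepsilon + \tfrac{\gamma}{2}\,\varepsilon\wedge\ov\varepsilon$, and your master formula for $\dif\omega$ against \eqref{struct} and \eqref{conne}, and each of (1)--(5) does reduce to your pair (A)--(B), with (A) matching the first two equations of \eqref{deig} and (B) matching the third equation of \eqref{deig} together with \eqref{diverg}. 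The route differs from the paper's mainly in organization: the paper proves the equivalences as a chain --- $(1)\Leftrightarrow(2)$ by expanding $\nabla_\xi \ov Z$ in the real adapted frame, $(2)\Leftrightarrow(3)$ via the Hodge-star identity $\ast\alpha = -\eta\wedge(\alpha\circ\phi)$ combined with Lemma~\ref{phix}, $(3)\Leftrightarrow(4)$ via $\mathcal{L}_\xi\omega=\imath_\xi\dif\omega$ and $d=\ov\partial\ (\mathrm{mod}\ \eta)$, and $(3)\Leftrightarrow(5)$ by identifying the horizontal form $\Theta$ through contraction with $\xi$ --- whereas you reduce all five statements simultaneously to the two scalar equations on the single complex function $u$, computed once in the complexified coframe. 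Your hub-and-spoke organization buys a uniform bookkeeping, avoids the Hodge star and Lemma~\ref{phix} altogether, and makes explicit where the divergence-free condition enters; the paper's chain, in exchange, isolates the geometrically transparent reformulation $d\alpha=-\mu\,\eta\wedge(\alpha\circ\phi)$ and keeps each link short. You also correctly flag the one delicate point, namely that (B) encodes both the $\eta$-component of the eigenvalue equation and $\di X=0$; this is legitimate because, as noted right after \eqref{deig}, solutions of \eqref{deig} are automatically divergence-free, a fact the paper's own proof of $(1)\Leftrightarrow(2)$ uses in exactly the same way.
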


\begin{proof}
$(1) \Leftrightarrow (2)$: Recall that $X$ is a $\mu$-eigenfield of $\cu$ if and only if Eqs.~\eqref{deig} are satisfied. By expanding the derivatives in $\nabla_\xi \ov{Z} = \nabla_\xi (f_1X_1+f_2X_2) + \ii \nabla_\xi (f_2X_1-f_1X_2)$ and using Eq.~\eqref{conne}, we obtain
\begin{equation*}
\begin{split}
\nabla_\xi \ov{Z}=&(\xi(f_1)+C_0 f_2)X_1+(\xi(f_2)-C_0 f_1)X_2\\
&+\ii\left[(\xi(f_2)-C_0 f_1)X_1 - (\xi(f_1)+C_0 f_2)X_2\right]\,,
\end{split}
\end{equation*}
which clearly shows the equivalence between the first two equations in \eqref{deig} and the first equation in \eqref{deigcomplex}. Performing analogous computations one can see that the last equation in~\eqref{deig} and Eq.~\eqref{diverg} are equivalent to the second equation in~\eqref{deigcomplex} (as $\dim_\CC T^{(1,0)}M  =1$ it is enough to take $W = X_1 + \ii X_2$).

$(2) \Leftrightarrow (3)$: With respect to an adapted frame $\alpha = f_1 \omega_1 + f_2 \omega_2$, and therefore $\ast \alpha = -f_1 \eta \wedge \omega_2 + f_2 \eta \wedge \omega_1 = - \eta \wedge \alpha \circ \phi$. But $X$ is an eigenfield of $\cu$ tangent to the contact distribution if and only if $d\alpha= \mu \ast \alpha$, that is $d\alpha= -\mu \eta \wedge (\alpha \circ \phi)$. Applying Lemma \ref{phix}, we obtain easily \eqref{domeg} in terms of the associated complex 1-form $\omega$.

$(3) \Leftrightarrow (4)$: Taking into account that $\mathcal{L}_\xi \omega = \imath_\xi d\omega$  and $d = \ov{\partial}  \ (\textrm{mod} \ \eta)$ on  $\Lambda_{\DD}^{1,0} M$, we see that Equations \eqref{deigcomplex1} follow from \eqref{domeg}. The converse is immediate.

Alternatively, we can check directly the equivalence $(2) \Leftrightarrow (4)$: The first equation in~\eqref{deigcomplex} is equivalent to
$[\xi, \ov{Z}]=-\ii\mu\ov{Z}$, which in turn is equivalent to the first equation in~\eqref{deigcomplex1} as we can easily check by using the fact that $\xi$ is Killing. The second equation $\ov{\partial} \omega = 0$ is equivalent, in our context, to $d\omega(W, \ov{W})=0$ for any  $W \in T^{(1,0)}M$, or equivalently $g(\nabla_{W} \ov{Z}, \ov{W})- g(\nabla_{\ov{W}} \ov{Z}, W)=0$. Since on a Sasakian manifold  $\nabla_{W} \ov{Z} \in T^{(0,1)}M \oplus T^{0}M$, the first term is always zero, so $\ov{\partial} \omega = 0$ is equivalent with $g(\nabla_{\ov{W}} \ov{Z}, W)=0$ which in turn is equivalent with the second equation in~\eqref{deigcomplex} as we can directly check (assume $W = X_1 + \ii X_2$ as $\dim_\CC T^{(1,0)}M  =1$, then  expand the derivatives of $\ov{Z}$ written in the adapted frame by using \eqref{conne}).

$(3) \Leftrightarrow (5)$: Assume that (5) holds. First notice that $\mathcal{L}_\xi\varpi=\eta \wedge \mathcal{L}_\xi\omega$ so that the condition on $\mathcal{L}_\xi\varpi$ easily translates into $\mathcal{L}_\xi \omega = - \ii \mu \, \omega$. Second, remark that the holomorphicity condition $d\varpi = 0$ is equivalent to $\eta \wedge d\omega = 0$, which implies $d\omega = \eta \wedge \Theta$ for some \textit{horizontal} 1-form $\Theta$ (i.e. $\imath_\xi \Theta=0$). Contracting with $\xi$ the latter equation enables us to identify $\Theta$ through the formula $\Theta=\mathcal{L}_\xi\omega = -\ii \mu \omega$, and therefore we obtain  Eq.~\eqref{domeg}. The converse can be obtained along the same lines.
\end{proof}

We are now ready to prove Theorem~\ref{genexist}. Since the Sasakian manifold is assumed to be quasi-regular, the space of orbits of the Reeb flow is a 2-dimensional orbifold $\Sigma_g$, i.e. a Riemann surface of genus $g$ and $N$ marked points of integer weights $a_1, \dots, a_N$, and $M \to \Sigma_g$ is a Seifert fibration. We can also see $M$ as the circle $V$-bundle $\mathcal{S}(L)$ of a complex line $V$-bundle $L$ over $\Sigma_g$, whose structure around orbifold points is characterized by the integers $b_1, \dots, b_N$, $0 \leq b_j < a_j$ with $\mathrm{gcd}(a_j, b_j)=1$. In particular, if $\deg(L)$ is the (integer) Chern number of the smoothened line bundle $\abs{L}$, the (rational) Chern number of the line bundle $L$ is given by $c_1(L)=\deg(L) + \sum_{j=1}^N \frac{b_j}{a_j}$.  We summarize the Seifert invariants of $M$ as $\{\deg(L), g; \ (a_j, b_j)\}$. Noticing that the sign of $c_1(L)$ changes taking the conjugate Sasakian structure $(-\xi,-\eta,-\phi,g)$, and this does not affect neither the orientation of the manifold (which is fixed by $\eta\wedge \dif\eta = 2\upsilon_g$ and by $\phi$) nor the spectrum and eigenfields of the curl operator, we can safely assume that (see also the Appendix):
$$
c_1(L)<0\,.
$$

\begin{proof}[Proof of Theorem~\ref{genexist}]
The existence of eigenfields tangent to the contact distribution is not affected by $\mathcal{D}$-homothetic deformations, c.f. Lemma~\ref{murescale}. Accordingly, since $M$ is a circle bundle over an orbifold, we can safely assume, possibly up to a $\mathcal D$-homothetic deformation, that the length of a typical Reeb orbit is $2\pi$. We claim that the eigenvalue $\mu$ of an eigenfield tangent to the contact distribution is an integer. To this end we use the characterization of eigenfields tangent to the contact distribution given in item (4) of Proposition~\ref{eigencomplex}. It states that the $(1,0)$-form $\omega$ satisfies the eigenvalue problem $\mathcal{L}_\xi\omega=-\ii\mu\omega$, which implies that for any vector field $W\in T^{\mathbb C} M$ we have:
$$
\frac{d}{dt}\omega(\dif\varphi_{t}W)=-\ii\mu \, \omega(\dif\varphi_{t}W)\,,
$$
where $\varphi_t$ is the flow defined by the Reeb field $\xi$. In a neighbourhood $U$ of a typical orbit of $\xi$, one can choose the vector field $W$ such that the complex-valued function $F:[0,2\pi] \to \CC$, $F(t)=\omega(\dif \varphi_{t}W_x)$, is not identically constant for some $x \in U$ (otherwise, $\omega$ would be zero in such a neighborhood, but this is not possible because the zero set of a Beltrami field is nowhere dense). Integrating the ODE above we get
$$
\omega(\dif \varphi_{t}W_x)=\exp(-\ii\mu t)\omega(W_x)\,,
$$
thus implying that $\mu \in \ZZ$, as we wanted to show.

We now use the characterization from item (5) of Proposition~\ref{eigencomplex}. As in ~\cite[p. 619]{biq} (see the Appendix for details), the holomorphic 2-form $\varpi$ with the equivariance property $i\mathcal{L}_\xi\varpi=\mu \varpi$ can be understood as a holomorphic section of $K_{\Sigma_g}\otimes L^{-\mu}$, so, by Kodaira-Serre duality, as an element in $H^1(\Sigma_g, \Oo(L^\mu))$.  Therefore the existence of curl eigenfields tangent to the contact distribution (for an infinite sequence of eigenvalues) will follow if one is able to prove that the latter space has non-zero dimension. Indeed, according to the Riemann-Roch formula for orbifolds~\cite{kawa},
\begin{equation}
\dim_\CC H^0(\Sigma_g, \Oo(L^\mu)) - \dim_\CC H^1(\Sigma_g, \Oo(L^\mu)) = \deg L^\mu + 1 -g\,.
\end{equation}
Using Kodaira vanishing theorem for orbifolds \cite{bai}, we have $\dim_\CC H^0(\Sigma_g, \Oo(L^\mu))=0$ due to the fact that $L$ is negative and so is $L^\mu$ ($\mu >0$). Since
$\deg L^\mu = c_1(L^\mu) - \sum_{j=1}^N \frac{b_j(L^\mu)}{a_j} = \mu c_1(L)- \sum_{j=1}^N \frac{b_j(L^\mu)}{a_j}$, we obtain
\begin{equation}\label{dimh1}
\dim_\CC H^1(\Sigma_g, \Oo(L^\mu))=-1 + g - \mu c_1(L) + \sum_{j=1}^N \frac{b_j(L^\mu)}{a_j}\,,
\end{equation}
which is strictly positive for large enough $\mu$ (recall that by convention $-c_1(L) \in \mathbb{Q}_+$ and $0 \leq b_j(L^\mu)<a_j$). The theorem then follows.
\end{proof}

To finish this section, we illustrate the proof of Theorem~\ref{genexist} with several examples of
(quasi-) regular Sasakian structures for which we establish the existence of eigenfields tangent to the contact distribution, we determine $\mu_1^\DD$ and then we decide if the Reeb field is an energy minimizer by applying Theorem~\ref{minfirst}. In all these examples we choose a metric such that the length of the (regular) orbits of the Reeb field\footnote{If the period of the typical orbit of $\xi$ is $T$, then the period of the typical orbit of $a^{-1}\xi$ is $aT$. So one can always adjust the period of the fibres by a $\DD$-homothetic deformation of the metric.} is $2\pi$, in order to assure that $\mu \in \ZZ$ for the straightforward application of Eq.~\eqref{dimh1}.

We begin with the three standard Sasakian (regular) geometries: the unit round 3-sphere
$\Ss^3$, the  compact Heisenberg nilmanifolds $\mathrm{Nil}^3/\Gamma_r$, and the $\widetilde{SL}_2$-type manifolds.

\begin{ex}[Positive case] Let $M=\Ss^3$ be endowed with the standard Sasakian structure~\cite[p.211]{BGalbook}. We describe $M$ as a circle bundle over
$\CC P^1$ of degree $-1$ (Hopf fibration) with associated complex line bundle denoted by $L$. Eq.~\eqref{dimh1} with $\deg L = c_1(L) = -1$ and $g=0$ yields
$$
 \dim_\CC H^1(\CC P^1, \Oo(L^{\mu}))= -1 + \mu =\left\{
\begin{array}{ccc}
0 &,& \text{if} \ \mu=1\\[3mm]
\geq 1 &,& \text{if} \ \mu\geq 2
\end{array}
\right.
$$
Thus, the first eigenvalue associated to an eigenfield of the curl operator tangent to the contact distribution is $\mu_1^\DD=2$. Cf. Corollary \ref{mag2} we then have $\mu_1 = 2$ and the Reeb vector field $\xi$ is a minimizer of the energy in its adjoint orbit (already known).
\end{ex}

\begin{ex}[Null case] The (3-dim.) Heisenberg group
$\mathrm{Nil}^3$ is defined as the Lie group of all matrices with real entries $\begin{pmatrix} 1 & x & z \\ 0 & 1 & y \\ 0 & 0 & 1 \end{pmatrix}$ on which the subgroup with integer entries $\Gamma=\ZZ^3$ acts by left multiplication. Then the (compact) 3-dimensional Heisenberg nilmanifold is defined as $M=\mathrm{Nil}^3/\Gamma$. A (regular) Reeb field on $M$ is $\xi=\frac{1}{2\pi}\partial_z$ with contact form $\eta = 2\pi(\dif z-x \dif y)$  and the adapted Sasakian metric is $g=\pi(\dif x^2+ \dif y^2)+4\pi^2(\dif z-x \dif y)^2$.

As circle bundle over a torus $\mathbb{T}^2$, $M$ is of degree  $-1$, with associated complex line bundle $L$. Again Eq.~\eqref{dimh1} with $\deg L = c_1(L) = -1$ and $g=1$ yields $\dim_\CC H^1(\mathbb{T}^2, \Oo(L^{\mu}))= \mu \geq 1$. Therefore $\mu_1^\DD=1$, so $\mu_1 < 2$ and the Reeb vector field $\xi$ is not a minimizer of the energy in its adjoint orbit.

A similar discussion can be made for compact Heisenberg manifolds
$M_k=\mathrm{Nil}^3/\Gamma_k$ which are circle bundle of degree  $-k$ over a flat torus.
\end{ex}

\begin{ex}[Negative case]  Let $M = T_1 \Sigma_g$ be the unit tangent bundle of a compact oriented surface $\Sigma_g$ of genus $g > 1$, and with constant $-2$ Gaussian curvature metric. As $\Sigma_g = \Gamma \setminus \mathbb{H}$ with $\Gamma$ a cocompact Fuchsian group
acting on the upper half-plane $\mathbb{H}$, the manifold $M$ can be identified with $\Gamma \setminus PSL(2, \RR)$, where $PSL(2, \RR)=SL(2, \RR)/\{\pm I\}$ is itself a circle bundle over $\mathbb{H}$.  On the covering $SL(2, \RR)$ we consider the (regular) contact structure  $\eta = 2\dif \theta + \frac{1}{y} \dif x$, $\xi=\frac{1}{2}\partial_\theta$ and the adapted Sasakian metric $g=\frac{1}{2y^2}(\dif x^2+ \dif y^2)+(2\dif \theta + \frac{1}{y} \dif x)^2$, where the coordinates $(x,y,\theta)\in \RR\times (0, \infty) \times [0, 2\pi]$ are given by the Iwasawa decomposition of matrices on $SL(2, \RR)$ (notice that, when projected on $M$, $\xi$ has orbits of length $2 \pi$, like the infinitesimal generator $\partial_\theta$ of the $\Ss^1$-action on $SL(2, \RR)$).

From the Gauss-Bonnet theorem we deduce that $\mathrm{area}(\Sigma_g)=2\pi(g-1)$ and $\deg L = c_1(L) = 2(1-g)$, as expected since $L \cong T^{(1,0)}\Sigma_g$. As on the covering space we have $\mu \in \ZZ$, the same will be true on $M$ and then Eq.~\eqref{dimh1} yields
$$
\dim_\CC H^1(\Sigma_g, \Oo(L^{\mu}))= -1 + g - \mu (2-2g) = (g-1)(2\mu + 1) > 1\,,
$$
for any $\mu\geq 1$. Notice that if $\mu=1$, then the corresponding 1-forms $\omega \in K_{\Sigma_g}\otimes L^{-1} \cong K_{\Sigma_g}^2$ are holomorphic quadratic differentials forming a vector space of complex dimension $3(g-1)$, cf. e.g. \cite[Corollary 5.4.2]{jo}. As $\mu_1^\DD =1$, $\mu_1 <2$ and therefore the Reeb vector field $\xi$ is not an energy minimizer in its adjoint orbit. 
\end{ex}

We mention that in the above examples the coefficients $f_1$ and $f_2$ with respect to a global adapted frame (see \cite{ch} for details) of a curl eigenvector tangent to $\DD$ are simultaneously eigenfunctions of the Laplacian and of the vertical Laplacian. In particular the conclusions in the positive and null case can be drawn from the explicit knowledge of the (vertical) Laplacian spectrum on $\Ss^3$~\cite{lo} and $\mathrm{Nil}^3/\Gamma$~\cite{gowi}.

We continue with two quasi-regular Sasakian structures on spherical space forms. A particularly interesting quasi-regular example (the weighted sphere) will be analysed in detail in Section~\ref{S:weighted}. Since the spectrum of the curl operator on a quotient of $\mathbb S^3$ is contained in the spectrum of $\mathbb S^3$, it is obvious that $\mu_1=2$. The goal of the examples below is to show that the $\mu=2$ eigenspace contains a 2-dimensional basis of eigenfields tangent to the contact distribution using the Riemann-Roch argument introduced in the proof of Theorem~\ref{genexist}.

\begin{ex}[Real projective space and lens spaces]
The spectrum of the $\cu$ operator on $(\RR P^3, g=can)$ was calculated in~\cite{baer}: $\mu = \pm(2+k)$, with $k$ even, and with the same multiplicity as on $\Ss^3$, $m(\cu , \mu)= (k+1)(k+3)$. $\RR P^3$ with the standard structure is an example of \textit{regular} Sasakian manifold: all trajectories of the Reeb field $\xi$ are circles with the same period $\pi$. We consider a $\DD$-homothetic deformation that render the trajectories of the Reeb field of length $2\pi$: if $(\eta, \xi, \phi, g)$ is the standard structure, consider $g'=ag^\DD + a^2 \eta \otimes \eta$, $\xi'=a^{-1}\xi$ with $a=2$. Then with respect to $g^\prime$ we have $\mu^\prime=\mu/2 \in \ZZ$ (so $\mu$ have to be even) and the Riemann-Roch argument implies that $\dim_\CC H^1(\CC P^1, \Oo(L^{\mu'})) = -1 + 2\mu^\prime \geq 1$, for any $\mu^\prime \geq 1$, since $c_1(L)=\deg L =-2$. In particular, $\mu^\prime =1$ (corresponding to $\mu =2$) is the lowest positive eigenvalue, so the Reeb vector field is a minimizer with respect to the metric $g$ but not with respect to $g^\prime$. A similar discussion can be done for lens spaces $L(p,1)$, which are circle bundle of degree $-p$ over a $\CC P^1$.
\end{ex}

Notice that the dimension given by the Riemann-Roch argument refers only to a very special subclass of eigenfields (those tangent to the contact distribution) so it may not coincide with the multiplicity of the corresponding eigenvalue.

\begin{ex}[Poincar\'e homology sphere] $M=\Sigma(2,3,5)$ can be seen as a Seifert bundle with Seifert invariants
$\{- 2, 0; (2, 1), (3, 2), (5,4)\}$ and with Chern number equals to
$c_1(L)=\deg(L) + \sum_{j=1}^3 \frac{b_j}{a_j} =  -\frac{1}{30}$.
There are $3$ singular orbits with \emph{orbit invariants} $(p, q)\in\{(2, 1), (3, 1), (5,1)\}$
obtained using the formulae \cite{scot} $a=p, \quad b q \equiv  1 \ (\mathrm{mod} \ p), \quad 0< b < a$.

In general, in the neighbourhood of an exceptional fibre, a Seifert space is the quotient of $\Ss^1 \times D^2$ by the action of $\ZZ_p$ generated by a
homeomorphism which is simply the product of a rotation through $2\pi/p$ on the
$\Ss^1$-factor with a rotation through $2\pi q/p$ on the $D^2$-factor~\cite{scot}.
So if the regular fibre of $M$ is of period $T= 2\pi \tau$, for some $\tau > 0$ that we identify below, then the periods of the exceptional fibres are, respectively $\frac{2\pi}{2}\tau, \quad \frac{2\pi}{3}\tau, \quad \frac{2\pi}{5}\tau$.

As a spherical space form $M=\Ss^3/ \mathbb{I}^*$ where $\mathbb{I}^*$ is the binary icosahedral group of order 120, its volume is $\mathrm{Vol}(M)=2\pi^2/120$.
We also know \cite[$\S$3.4]{boyperf} that $\varphi: M \to \Sigma$ is an orbifold Riemannian submersion over an orbifold surface of Gaussian curvature 4, so that using the Gauss-Bonnet formula combined with Riemann-Hurwitz formula we get
$$4 \mathrm{Area}(\Sigma)= \int_\Sigma K \dif \sigma = 2\pi \chi(\Sigma)=
2\pi \left(2- \sum_i \left(1-\frac{1}{a_i}\right)\right)=2\pi \frac{1}{30}.$$
which yields  $\mathrm{Area}(\Sigma)=\pi/60$. Using the co-area formula, we obtain that the length/period of a typical fibre is $T = \mathrm{Vol}(M)/ \mathrm{Area}(\Sigma) = \pi$. With respect to a $\DD$-homothetic rescale of factor $a=2$ of the standard structure, the regular fibres have length $2\pi$, and Riemann-Roch argument for $\mu^\prime=1$ yields $\dim_\CC H^1(\CC P^1, \Oo(L)) = 1$. Thus ${\mu_1^\prime}^\DD=1$, which corresponds to $\mu_1^\DD=2$ on the standard homology sphere. We conclude that on the Poicar\'e homology sphere with the standard structure we have $\mu_1=2$ as expected, and therefore the Reeb vector is an energy minimizer.
\end{ex}

\section{Proof of Theorem~\ref{T:main1} for irregular Sasakian 3-manifolds}\label{S:irreg}

If the Sasakian structure is irregular, the arguments in Section~\ref{S:qreg} showing the existence of curl eigenfields tangent to the contact distribution do not work. Instead, we shall prove Theorem~\ref{T:main1} in the irregular case using Rukimbira's approximation theorem~\cite{ruk}. It states that any irregular Sasakian structure can be approximated by quasi-regular ones, and the approximation sequence is given by type I deformations of the limiting structure. So let us begin by recalling this type of deformations.

Let $(M, \xi, \eta, \phi, g)$ be a compact Sasakian 3-manifold. \textit{Deformations of type I} are defined by \cite[p.269]{BGalbook}:
\begin{equation}\label{deformI}
\begin{split}
\widetilde{\xi}   =& \ \xi + \rho\,, \\[1.5mm]
\widetilde{\eta} =& \ f \eta , \quad f=\frac{1}{1+\eta(\rho)}\,, \\[1.5mm]
\widetilde{\phi}   =& \ \phi - \frac{\eta(\cdot)}{1+\eta(\rho)} \phi \rho\,, \\[1.5mm]
\widetilde{g}   =& \ \frac12\dif \widetilde{\eta} \circ (\widetilde{\phi} \otimes \mathrm{Id})
+ \widetilde{\eta} \otimes \widetilde{\eta}\,,
\end{split}
\end{equation}
where the perturbation $\rho$ is any vector field giving rise to a contact metric structure. These deformations do not affect the underlying CR structure. It is useful to notice that~\cite[p.271]{BGalbook}
\begin{equation}\label{rho}
\rho= \frac{1-f}{f}\xi -\frac{1}{2}\phi \gr \frac{1}{f}\,.
\end{equation}

The deformed contact metric structure $(\widetilde \xi, \widetilde \eta, \widetilde \phi, \widetilde g)$ is Sasakian if and only if~\cite{OGau} (see also \cite[p.271]{BGalbook})
\begin{equation}
\Hess_{1/f}(\phi X , \phi Y)= \Hess_{1/f}(X , Y)\,,
\end{equation}
for all sections $X$ and $Y$ of the contact distribution $\DD = \Ker \eta$.

A standard example of Sasakian manifold with irregular Reeb field is the weighted sphere (see next section) with positive weights $k$ and $\ell$ such that $k/\ell \notin \mathbb{Q}$.

Rukimbira's theorem~\cite[p.212]{BGalbook} claims that, on a compact manifold, every Sasakian structure is a type I deformation of a quasi-regular one where $\rho$ is an infinitesimal automorphism of the quasi-regular structure as small as one wishes (see the proof of~\cite[Thrm. 7.1.10]{BGalbook}). In particular, $\rho$ leaves invariant the contact form $\eta$, i.e. $L_\rho\eta=0$. We are now ready to prove Theorem~\ref{T:main1} for irregular Sasakian manifolds, but we first state a lemma which will be used in the proof.

\begin{lm}\label{L:zero}
If either $\di \rho = 0$ or $\mathcal{L}_\rho \eta=0$, then $\xi(f)=\rho(f)=0$. In particular, this happens if $\rho$ is an infinitesimal automorphism of the Sasakian structure $(\xi, \eta, \phi, g)$.
\end{lm}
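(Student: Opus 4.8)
The plan is to express all the deformation data through the single function $h:=1/f=1+\eta(\rho)$. By \eqref{rho} this gives $\rho=(h-1)\,\xi-\tfrac12\,\phi\,\gr h$, and, since $\xi(f)=-\xi(h)/h^{2}$ and $\rho(f)=-\rho(h)/h^{2}$, the assertion is equivalent to $\xi(h)=\rho(h)=0$. The key observation will be that \emph{each} of the two hypotheses separately already forces $\xi(h)=0$, after which the vanishing of $\rho(h)$ is an easy byproduct.

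First I would compute $\mathcal{L}_\rho\eta$ via Cartan's formula $\mathcal{L}_\rho\eta=\imath_\rho\dif\eta+\dif(\eta(\rho))$. Here $\eta(\rho)=h-1$, so $\dif(\eta(\rho))=\dif h$, and since $\imath_\xi\dif\eta=0$ only $\imath_{\phi\gr h}\dif\eta$ has to be evaluated. Using the Sasakian compatibility relations $\dif\eta(X,Y)=2g(X,\phi Y)$ and $g(\phi X,\phi Y)=g(X,Y)-\eta(X)\eta(Y)$ one gets $\imath_{\phi\gr h}\dif\eta=2\,\dif h-2\,\xi(h)\,\eta$, and hence
\[
\mathcal{L}_\rho\eta=\xi(h)\,\eta .
\]
In particular the hypothesis $\mathcal{L}_\rho\eta=0$ immediately yields $\xi(h)=0$. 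For the other hypothesis I would compute the divergence directly: using $\di\xi=0$, $\di\rho=\di\big((h-1)\xi\big)-\tfrac12\,\di(\phi\gr h)=\xi(h)-\tfrac12\,\di(\phi\gr h)$. The term $\di(\phi\gr h)$ is handled by the very computation carried out in the proof of Proposition~\ref{mag}, namely $\di(\phi X)=(\ast\dif X^\flat)(\xi)-2\,\eta(X)$ for any vector field $X$; applied to $X=\gr h$ (so $\dif X^\flat=\dif\dif h=0$ and $\eta(X)=\xi(h)$) it gives $\di(\phi\gr h)=-2\,\xi(h)$, whence $\di\rho=2\,\xi(h)$, so $\di\rho=0$ again forces $\xi(h)=0$.

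Once $\xi(h)=0$ is established, I would conclude by evaluating $\rho(h)=\dif h(\rho)=(h-1)\,\xi(h)-\tfrac12\,g(\gr h,\phi\gr h)$; the last term vanishes because $\phi$ is skew-symmetric, so $\rho(h)=(h-1)\,\xi(h)=0$, and therefore $\xi(f)=\rho(f)=0$. The final sentence of the lemma then follows at once, since an infinitesimal automorphism of the Sasakian structure satisfies in particular $\mathcal{L}_\rho\eta=0$ (and $\mathcal{L}_\rho g=0$, hence $\di\rho=0$), so it meets either hypothesis. The only step that is not pure bookkeeping is the identity $\di(\phi\gr h)=-2\,\xi(h)$, equivalently $\mathcal{L}_\rho\eta=\xi(h)\,\eta$; there one must be careful with signs and must not assume that $\gr h$ is tangent to $\DD$. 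I expect this to be the main — though modest — obstacle, and it is essentially already contained in the proof of Proposition~\ref{mag}.
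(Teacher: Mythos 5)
Your proposal is correct and takes essentially the same route as the paper: both reduce the lemma to the identities $\di \rho = 2\,\xi(\tfrac{1}{f})$ and $\mathcal{L}_\rho \eta = \xi(\tfrac{1}{f})\,\eta$ (the paper phrases the latter via $\mathcal{L}_{\widetilde{\xi}}\widetilde{\eta}=0$, which is the same statement up to the nowhere-vanishing factor $f$), and then conclude from $\rho(\tfrac{1}{f})=\tfrac{1-f}{f}\,\xi(\tfrac{1}{f})$ given by \eqref{rho}. Your Cartan-formula computation and the use of the divergence identity from the proof of Proposition~\ref{mag} correctly fill in the details the paper dismisses as ``easy to prove''.
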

\begin{proof}
By Eq.~\eqref{rho} we have, in general, $\rho(\tfrac{1}{f})=\tfrac{1-f}{f}\xi(\tfrac{1}{f})$, or equivalently $\widetilde{\xi}(\tfrac{1}{f})=\tfrac{1}{f}\xi(\tfrac{1}{f})$. Then it is easy to prove that
$\di \rho = 2 \xi(\tfrac{1}{f})$ and that $\widetilde{\xi}(f)\eta + \mathcal{L}_\rho \eta = 0$ (using the Hessian symmetry above and the contact conditions $\mathcal{L}_\xi \eta =\mathcal{L}_{\widetilde{\xi}} \widetilde{\eta} = 0$).
\end{proof}

\begin{proof}[Proof of Theorem~\ref{T:main1} for irregular Sasakian manifolds]
As mentioned above, we may assume that the irregular Sasakian structure $(\widetilde \xi, \widetilde \eta, \widetilde \phi, \widetilde g)$ on $M$ is a deformation of type I of a quasi-regular Sasakian structure $(\xi, \eta, \phi, g)$, with associated vector $\rho$ that is an infinitesimal automorphism of $(\xi, \eta, \phi, g)$.

From Theorem~\ref{genexist} we infer that on $(M, \xi, \eta, \phi, g)$, there exists a $\mu$-eigenfield $X$ of $\cu$ which is tangent to the contact distribution. We claim that $v = f^{-2}X$ is divergence free on
$(M, \widetilde{g})$. Indeed, under a deformation of type I, the Riemannian volume form changes as follows:
$$
\upsilon_{\widetilde g} = \frac{1}{2} \widetilde \eta \wedge \dif \widetilde \eta = \frac{1}{2} f \eta \wedge (\dif f\wedge \eta + f\dif \eta) = f^2 \upsilon_{g}.
$$
Taking the Lie derivative with respect to $X$ gives $\mathcal{L}_X\upsilon_{\widetilde g} = \mathcal{L}_X(f^2 \upsilon_{g})= X(f^2)\upsilon_{g} + f^2 \mathcal{L}_X\upsilon_{g}$
which in turn implies (via the general relation between divergence of a vector field the the Lie derivative of the volume element)
$$
{\widetilde \di} X \, \upsilon_{\widetilde g} = \big(X(f^2) + f^2 \di X\big) \upsilon_{g},
$$
so that $f^2 {\widetilde \di} X = X(f^2) + f^2 \di X$. Multiplication with $f^{-4}$ of the latter equation gives  $f^{-2} {\widetilde \di} X = -X(f^{-2}) + f^{-2} \di X$, or equivalently $ {\widetilde \di} (f^{-2}X) = f^{-2} \di X$. As $X$ is a $\cu$ eigenvector, it is divergenceless, and it results that ${\widetilde \di} (f^{-2}X)=0$ as claimed.

We now consider the second variation of the energy of $\widetilde{\xi}$ along $v$ (see Prop.~\ref{P:secondvar}):
$$\frac{\dif^2}{\dif t^2}E(\widetilde{\xi}_t)\bigl \vert _{t=0} =\int_M \{\abs{\widetilde{\nabla}_{\widetilde{\xi}} v}_{\widetilde{g}}^2 - \widetilde{R}(v, \widetilde{\xi}, \widetilde{\xi}, v)  \} \upsilon_{\widetilde g}\,,$$
where $\abs{\cdot}_{\widetilde{g}}^2$ denotes the norm with respect to the metric $\tilde g$. Using that $\widetilde{\nabla}_{\widetilde{\xi}} v = [\widetilde{\xi}, v] + \widetilde{\nabla}_{v} \widetilde{\xi}= [\xi, v]+ [\rho, v] - \phi v$, Lemma~\ref{L:zero} and the fact that $X$ is an eigenfield of $\cu$ so that $[\xi, X]=\mu \, \phi X$, we obtain
\begin{equation}
\widetilde{\nabla}_{\widetilde{\xi}} v = (\mu -1) \phi v + [\rho, v]\,.
\end{equation}
On the other hand, as on any Sasakian manifold, $\widetilde{R}(v, \widetilde{\xi}, \widetilde{\xi}, v)= \abs{v}_{\widetilde{g}}^2$.  Therefore the second variation of the energy of $\widetilde\xi$ along $v$ reads:
\begin{equation*}
\begin{split}
\frac{\dif^2}{\dif t^2}E(\widetilde{\xi}_t)\bigl \vert _{t=0} &= \mu (\mu -2)\norm{v}_{L^2}^2 + \norm{[\rho, v]}_{L^2}^2 +2(\mu-1)\big([\rho, v], \phi v \big)_{L^2}\\
& \leq \mu (\mu -2)\norm{v}_{L^2}^2 + \norm{[\rho, v]}_{L^2}^2 +2\abs{\mu-1}\norm{v}_{L^2} \norm{[\rho, v]}_{L^2}\\
& = \left(\norm{[\rho, v]}_{L^2} + (\abs{\mu-1}-1)\norm{v}_{L^2} \right)\left(\norm{[\rho, v]}_{L^2} + (\abs{\mu-1}+1)\norm{v}_{L^2} \right)\,,\\
\end{split}
\end{equation*}
where the $L^2$ scalar product and norm are computed with respect to the metric $\tilde g$.

Now we perform a $\DD$-homothetic deformation of the irregular Sasakian structure
$(\widetilde \xi, \widetilde \eta, \widetilde \phi, \widetilde g)$ and its quasi-regular approximation
$(\xi, \eta, \phi, g)$. In particular, we have to impose that $\rho^\prime = a^{-1}\rho$, so that $\widetilde{\xi}^\prime = a^{-1}\widetilde{\xi}$, $\xi^\prime = a^{-1}\xi$, $\mu^\prime=a^{-1}\mu$ and $\widetilde {g}^\prime=a\widetilde g^{\DD}+a^2\widetilde\eta\otimes\widetilde\eta$. It is easy to check that under this rescaling $v'=v$ and the volume changes as $\upsilon_{\widetilde {g}^\prime}=a^2 \upsilon_{\widetilde g}$, and hence the term $\left(\norm{[\rho, v]}_{L^2} + (\abs{\mu-1}-1)\norm{v}_{L^2} \right)$ becomes:
$$
\left(a^{1/2}\norm{[\rho, v]}_{L^2} + a^{3/2}(\abs{\tfrac{\mu}{a}-1}-1)\norm{v}_{L^2} \right)\,.
$$
Here we have used that $[\rho,v]\in \DD$ because $\rho$ is an infinitesimal automorphism. Taking $a>\mu$, this expression can be written as
$$
\mu a^{1/2}\norm{v}_{L^2}\left(\frac{\norm{[\rho, v]}_{L^2}}{\mu\norm{v}_{L^2}} -1\right)\,.
$$

Finally, let us show that $\rho$ can be chosen small enough such that
$$
\frac{\norm{[\rho, v]}_{L^2}}{\mu\norm{v}_{L^2}} <1\,,
$$
which implies that $\frac{\dif^2}{\dif t^2}E(\widetilde{\xi}^\prime_t)\bigl \vert _{t=0} <0$, so $\widetilde{\xi}^\prime$ is unstable, and hence it is not a minimizer. Indeed, $v$ reads in terms of the irregular Sasakian contact form $\widetilde \eta$ as
$$
v=\frac{1}{(1-\widetilde\eta(\rho))^2}\,X\,.
$$
Rukimbira's theorem allows us to take the perturbation $\rho$ as small as one wishes, so let us take $\rho$ small enough in the $C^1$ norm so that $\norm{v}_{L^2}\geq C\norm{X}_{L^2}$ and $\norm{(1-\widetilde\eta(\rho))^{-2}}_{H^1}\leq 1 + C\norm{\rho}_{H^1}$. Using the inequality $\norm{[\rho,v]}_{L^2}\leq C\norm{\rho}_{H^1}\norm{v}_{H^1}$, where $C$ only depends on the metric $\widetilde g$, and $\norm{X}_{H^1}\leq C\mu\norm{X}_{L^2}$ (actually, in the metric $g$ we have $\norm{X}_{H^1}=(1+\mu)\norm{X}_{L^2}$, so the constant only takes into account that we are computing the $H^1$ norm in the metric $\widetilde g$), we conclude that
$$
\frac{\norm{[\rho, v]}_{L^2}}{\mu\norm{v}_{L^2}} \leq C\norm{\rho}_{H^1}(1+\norm{\rho}_{H^1})<1
$$
provided that $\rho$ is small (the smallness depends on the irregular Sasakian structure $(\widetilde \xi, \widetilde \eta, \widetilde \phi, \widetilde g)$).

Summarizing, we have proved that for large enough $a$ the Reeb field of the $\DD$-homothetic deformation of $(\widetilde \xi, \widetilde \eta, \widetilde \phi, \widetilde g)$ with constant $a$ is unstable. Theorem~\ref{minfirst} and Proposition~\ref{P:imp} then imply that there exists a curl eigenfield tangent to the contact distribution $\widetilde \DD$ and that $\widetilde \xi$ is a minimizer if and only $a\leq \mu_1^{\widetilde \DD}/2$. The theorem then follows.
\end{proof}

\section{Energy minimizing Beltrami fields on weighted 3-spheres}\label{S:weighted}

In this section we analyze in detail an interesting class of Sasakian 3-manifolds, the quasi-regular weighted spheres, where we can prove that the first positive eigenvalue of the curl operator is 2, thus implying that the Reeb field is an energy minimizer, cf. Theorem~\ref{T:main2}. We start recalling its definition and some  geometric properties~\cite{BGalbook}, which takes advantage of the Seifert fibre space structure of $\Ss^3$ \cite{scot}.

Throughout this section, $k$, $\ell$ will be coprime integers such that $0 \leq \ell < k$. We consider the Seifert fibre space structure of $\Ss^3$ realized by the foliation $\Ff_{k, \ell}$ given by the orbits of the $\Ss^1$-action:
\begin{equation}\label{actkl}
\big(\theta, (z_1, z_2)\big) \mapsto \left(e^{\ii \ell \theta} z_1 , e^{\ii k \theta} z_2\right), \quad (z_1, z_2) \in \Ss^3, \theta \in \RR/(2\pi\ZZ)\,,
\end{equation}
with quotient space the weighted projective space $\CC P^1(\ell, k)$, a 2-orbifold with two conical singularities corresponding to the two singular orbits of the Seifert foliation.

We notice that we can see \eqref{actkl} as the flow of the vector field on $\Ss^3$ which in Hopf coordinates $(\cos s \, e^{\ii \phi_1}, \ \sin s \, e^{\ii \phi_2})$, $(s, \phi_1,\phi_2) \in[0, \pi/2] \times [0, 2\pi]^2$ reads
\begin{equation}\label{xikl}
\xi_{k, \ell}=\ell \partial_{\phi_1} + k \partial_{\phi_2}\,.
\end{equation}

The weighted Sasakian structure with weights $w=(k, \ell)$ on $\Ss^3$~\cite{OGau, taka} (see also~\cite[Ex. 7.1.12]{BGalbook}) is a deformation of type I of the standard structure.  The weighted Reeb vector field $\xi_w$ is the field $\xi_{k,\ell}$ above and the associated contact form is given, in terms of the standard one, as $ \eta_{w} =  \varsigma^{-1} \eta$, where
$$ \varsigma(\cos s e^{\ii \phi_1},\sin s e^{\ii \phi_2})= \ell \cos^2 s + k \sin^2 s$$
is a (globally defined) nowhere zero function on $\Ss^3$. The explicit expression of $\phi_w$ can be computed using~\eqref{deformI}. Finally the (adapted) weighted Sasakian metric is given in Hopf coordinates as
\begin{equation}\label{gw}
\begin{split}
g_{w}=& \ \varsigma^{-1}\left(\dif s^2 + \varsigma^{-2} \sin^2 \! s \cos^2 \! s(k\dif \phi_1 -\ell \dif \phi_2)^2\right) \\
&+ \varsigma^{-2}(\cos^2 \! s \, \dif \phi_1 + \sin^2 \! s \, \dif \phi_2)^2\, .
\end{split}
\end{equation}
We note that, with respect to $g_{w}$, $\Ff_{k, \ell}$ is a Riemannian foliation by geodesic circles.

The Sasakian manifold $(\Ss^3,\xi_w, \eta_w, \phi_w, g_{w})$ obtained in this way will be denoted by $\Ss^3_w$ and will be called \textit{the weighted $3$-sphere} of weights $w=(k, \ell)$. Its scalar curvature is given by:
\begin{equation}\label{scalw}
\Scal^{\Ss^3_w}=6-8\left(1+k+\ell-\frac{3k\ell}{\ell \cos^2 s + k \sin^2 s}\right).
\end{equation}
In particular we have the lower bound (recall the assumption $\ell <k$):
\begin{equation}\label{lowscalw}
\Scal^{\Ss^3_w}\geq 8(2\ell - k)-2.
\end{equation}
\begin{re}
The metric considered in~\cite{OGau} is a $\DD$-homothetic deformation with constant $a=\tfrac{1}{2}(k+\ell)$ of the one given in \eqref{gw}.
\end{re}

It is easy to check that the following vector fields define a \emph{global} adapted orthonormal frame on $\Ss^3_w$:
\begin{equation}
\begin{split}
& \xi_w =\ell \partial_{\phi_1}+k\partial_{\phi_2}\,, \\
& X_1 =\sqrt{\varsigma} \big( \cos(\phi_1 + \phi_2)\partial_s +\sin(\phi_1 + \phi_2)(\tan s \partial_{\phi_1} - \cot s \partial_{\phi_2}) \big)\,,\\
& X_2 =\sqrt{\varsigma} \big( \sin(\phi_1 + \phi_2)\partial_s -\cos(\phi_1 + \phi_2)(\tan s \partial_{\phi_1} - \cot s \partial_{\phi_2}) \big)\,.
\end{split}
\end{equation}
The dual, positively oriented coframe is denoted by $\{\eta_w, \omega_1, \omega_2\}$. Notice that when $k=\ell=1$ (i.e. on the round sphere) this is a global orthonormal frame of Killing vector fields which are $\cu$ eigenfields for the eigenvalue $\mu=2$ (of multiplicity 3).

The main result of this section shows that the Reeb field $\xi_w$ is an energy minimizer (this is the second part of Theorem~\ref{T:main2}). Moreover, this theorem has an interesting application in the seemingly unrelated area of field theories admitting topological solitons (see Proposition~\ref{fadd} below).
\begin{te} \label{thweight}
The first positive eigenvalue of the curl operator on the weighted Sasakian $3$-sphere $\Ss^3_w$ is $2$, and is simple except for the round metric case. In particular, the Reeb field $\xi_w$ is an energy minimizer in its adjoint orbit under the action of volume-preserving diffeomorphisms. Moreover, the first positive eigenvalue of curl acting on vector fields tangent to the contact distribution is $\mu^\DD_1=k+\ell$ with multiplicity $2$.
\end{te}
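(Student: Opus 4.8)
\emph{Proof proposal.} The plan is to reduce the whole statement to a spectral computation for $\cu$ and then carry it out with the holomorphic description of contact-tangent eigenfields from Section~\ref{S:qreg}. By Theorem~\ref{minfirst}, $\xi_w$ is an energy minimizer in its $\SD(\Ss^3_w)$-orbit if and only if the first positive eigenvalue of $\cu$ is $2$, and by Proposition~\ref{mag} any $\cu$-eigenfield with eigenvalue in $(0,2)$ must be tangent to $\DD$. Hence it suffices to prove that the smallest positive $\cu$-eigenvalue admitting an eigenfield tangent to $\DD$ is $\mu_1^\DD=k+\ell$, of multiplicity $2$. Granting this: for $w=(k,\ell)\neq(1,1)$ (a genuine weighted sphere has $1\le\ell<k$, so $k\geq2$ and $k+\ell\geq3$) Corollary~\ref{mag2} gives $\mu_1=2$; moreover any $\cu$-eigenfield $X$ with eigenvalue $2$ has $\Delta(\eta_w(X))=0$ by Proposition~\ref{mag}, hence $\eta_w(X)$ equals a constant $c$, so $X-c\xi_w\in\DD$ is a $2$-eigenfield and must vanish because $\mu_1^\DD>2$ — which proves $\mu_1=2$ is simple. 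The case $w=(1,1)$ is the round metric, where the $\mu=2$ eigenspace is $3$-dimensional (the stated exception) and $\xi_w$ is still a minimizer.

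For the computation of $\mu_1^\DD$ I would argue as in the proof of Theorem~\ref{genexist}. The weighted sphere is the total space of the Seifert fibration $\Ss^3_w=\mathcal S(L)\to\Sigma:=\CC P^1(\ell,k)$ associated with the $\Ss^1$-action \eqref{actkl}; since $\gcd(k,\ell)=1$ the flow $\varphi_t$ of $\xi_w=\ell\partial_{\phi_1}+k\partial_{\phi_2}$ satisfies $\varphi_{2\pi}=\mathrm{id}$, so integrating the relation $\mathcal L_{\xi_w}\omega=-\ii\mu\omega$ of item (4) of Proposition~\ref{eigencomplex} along a closed Reeb orbit forces every $\cu$-eigenvalue with eigenfield tangent to $\DD$ to be an integer. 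By item (5) of Proposition~\ref{eigencomplex}, such an eigenfield with (integer) eigenvalue $\mu$ is the same datum as a holomorphic section of $K_\Sigma\otimes L^{-\mu}$. For the action \eqref{actkl} one identifies $L=\Oo(-1)$ on the weighted projective line (so $c_1(L)=-\tfrac1{k\ell}<0$, compatibly with the orientation convention of Section~\ref{S:qreg}) and $K_\Sigma=\Oo(-k-\ell)$, whence $K_\Sigma\otimes L^{-\mu}=\Oo(\mu-k-\ell)$. Since $H^0(\Sigma,\Oo(n))$ is spanned by the monomials $z_1^az_2^b$ with $a\ell+bk=n$ and $a,b\in\ZZ_{\geq0}$, it has dimension $0$ for $n<0$ and dimension $1$ for $n=0$. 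Therefore the least positive integer $\mu$ for which a $\cu$-eigenfield tangent to $\DD$ exists is $\mu=k+\ell$, and the associated space of complex eigenfields $Z=X-\ii\phi X$ is one-dimensional over $\CC$; by Lemma~\ref{phix} the real eigenfields in $\DD$ come as pairs $\{X,\phi X\}$, so that $\CC$-line corresponds to a real $2$-plane. This gives $\mu_1^\DD=k+\ell$ with multiplicity $2$, and the same monomial count describes the whole contact-tangent spectrum.

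I expect the delicate point to be the middle step: correctly identifying the orbifold line $V$-bundle $L$ over $\CC P^1(\ell,k)$ and its negative tensor powers, pinning down $K_\Sigma$, and invoking Serre duality and orbifold Riemann–Roch with the sign conventions under which Proposition~\ref{eigencomplex} was stated (so that the hypothesis $c_1(L)<0$ is in force), while also keeping straight the factor $2$ between the complex objects $Z,\omega,\varpi$ and the real eigenfields produced by Lemma~\ref{phix}. It is worth noting that the curvature bound of Proposition~\ref{bohn} only yields $\mu_1^\DD\geq2(2\ell-k)$, which is vacuous once $k\geq2\ell$; so the holomorphic argument, rather than a Bochner estimate, is genuinely required to pin down $\mu_1^\DD$ on a general weighted sphere. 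Once these points are settled, the monomial count is elementary and the theorem follows.
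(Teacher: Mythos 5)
Your proposal is correct and follows the paper's overall strategy --- the reduction via Theorem~\ref{minfirst}, Proposition~\ref{mag} and Corollary~\ref{mag2}, the integrality of $\mu$ coming from the $2\pi$-periodic Reeb flow, and the holomorphic description of $\DD$-tangent eigenfields from Proposition~\ref{eigencomplex} with Serre duality --- but your computational endgame differs from the paper's. The paper obtains the lower bound $\mu\geq k+\ell$ from Kodaira vanishing together with the Chern class inequality $-\mu c_1(L)+c_1(K_{\CC P^1(\ell,k)})\geq 0$, shows the bound is attained by exhibiting the explicit eigenfields $\varsigma^{3/2}X_1$ and $\varsigma^{3/2}X_2$, and then computes the multiplicity by orbifold Riemann--Roch, working out the (reversely oriented) Seifert invariants of $L$ and the recurrence for the local data $b_i(L^{k+\ell})$ to get $\dim_\CC H^1\bigl(\CC P^1(\ell,k),\Oo(L^{k+\ell})\bigr)=1$. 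You instead identify $L\cong\Oo(-1)$ and $K_\Sigma\cong\Oo(-k-\ell)$ in the orbifold Picard group of $\CC P^1(\ell,k)$, which is infinite cyclic because the weights are coprime, so the identification is legitimate once $c_1(L)=-\tfrac{1}{k\ell}$ is known --- note that establishing this is precisely what the paper's Seifert-invariant computation does, so your ``delicate point'' still needs that input --- and then you read everything off the monomial description of $H^0(\Oo(n))$: vanishing for $n<0$ gives the lower bound, and the one-dimensional $H^0(\Oo(0))$ gives both existence and complex multiplicity one, hence real multiplicity $2$ via the pairing of $X$ with $\phi X$ (Lemma~\ref{phix}). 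This buys a shorter, uniform count (lower bound, existence and multiplicity in one stroke) at the cost of not producing explicit eigenfields, which the paper uses to confirm attainment independently of the cohomological identification. A genuine plus of your write-up is that you spell out the simplicity of $\mu_1=2$: for a $2$-eigenfield $X$, Proposition~\ref{mag} makes $\eta_w(X)$ harmonic, hence a constant $c$, and $X-c\xi_w\in\DD$ must vanish because $\mu_1^\DD=k+\ell>2$; the paper's proof leaves this step implicit.
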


\begin{proof}[Proof of Theorem~\ref{thweight}] We have seen that the weighted Sasakian structure on $\Ss^3$  is quasi-regular and is tailor-made for the Seifert fibre space structure (circle orbi-bundle) over the weighted projective space $\CP(\ell, k)$, with associated complex line bundle denoted by $L$. Since the length of a typical orbit of the Reeb field $\xi_w$ is $2\pi$, the proof of Theorem~\ref{genexist} implies that the $\mu$-eigenfields of the curl operator tangent to the contact distribution correspond to elements of $H^1\left(\CP(\ell, k), \Oo(L^{\mu})\right) \cong H^0\left(\CP(\ell, k), \Oo(L^{-\mu}\otimes K_{\CP(k, \ell)})\right)$, where the isomorphism comes from Serre duality. To ensure that the latter has non-trivial elements it is necessary to have $c_1(L^{-\mu}\otimes K_{\CP(\ell, k)}) \geq 0$, otherwise Kodaira vanishing theorem (for orbifolds) would imply that
$H^0\left(\CP(\ell, k), \Oo(L^{-\mu}\otimes K_{\CP(\ell, k)})\right)=0$. It then follows that:
$$-\mu c_1(L) + c_1(K_{\CP(\ell, k)}) \geq 0\,.$$
Since, cf. ~\cite{amp}, $c_1(K_{\CP(\ell, k)})=-\frac{1}{k}-\frac{1}{\ell}$ and $c_1(L)=-\frac{1}{k\ell}$, we obtain a lower bound for the eigenvalues of curl associated to eigenfields tangent to the contact distribution:
\begin{equation}\label{mu1Dweight}
\mu \geq k+\ell\,.
\end{equation}
In fact, by direct computation we can check that $\varsigma^{3/2} X_1$ and  $\varsigma^{3/2} X_2$ are independent eigenfields of eigenvalue $k+\ell$, so the lower bound is attained: $\mu^\DD_1=k+\ell$. Since $k+\ell \geq 2$, Corollary~\ref{mag2} implies that the first positive eigenvalue of curl on $\Ss_w^3$ is $\mu_1=2$, so the Reeb vector field is an energy minimizer in its adjoint orbit according to Theorem~\ref{minfirst}.

Finally, let us prove that the vector fields $\varsigma^{3/2} X_1$ and $\varsigma^{3/2} X_2$ form a (real) basis of the space of curl eigenfields tangent to the contact distribution corresponding to the eigenvalue $k+\ell$. Since this space is isomorphic to $H^1(\CP(\ell, k), \Oo(L^{k+\ell}))$ (cf. Section \ref{S:qreg}), and $\varsigma^{3/2} X_{1,2}$ are independent vector fields, it is enough to prove that the complex dimension equals 1 (using a Riemann-Roch argument). Recall~\cite{scot} that the oriented Seifert invariants of the fibration of $\Ss^3_w$ over $\CP(\ell, k)$ with fibres given by $\xi_{k,\ell}$ (and associated complex line orbi-bundle $L$) are:
\begin{itemize}
\item $\deg L = b=-1$

\item $a_1 = \ell$ and $k b_1 \equiv 1 (\mathrm{mod} \  \ell)$ with $0<b_1< \ell$.

\item $a_2 = k$ and $\ell b_2 \equiv 1 (\mathrm{mod} \  k)$ with $0< b_2 < k$.
\end{itemize}

Since $k$ and $\ell$ are relatively prime, there exist $x,y\in\mathbb{Z}$, such that $\ell x + ky=1$. We may choose $0<x<k$ and then it follows that $\frac{1}{k}<y+\ell<\ell+\frac{1}{k}$, so $b_1=y+\ell$ and $b_2=x$. We choose to work with reversely oriented Seifert invariants
\[\{\deg L = -1,0; \ (a_1 = \ell, b_1=-y), (a_2 = k, b_2 = k-x)\}\,.\]
in order to have $c_1(L)=-\frac{1}{k\ell}$ as can be easily checked.

In this case the recurrence relation~\cite{blau} $b_i(L^j) \equiv (b_i(L^{j-1})+b_i(L)) \, (\mathrm{mod} \  a_i)$, $0 \leq b_i(L^j) < a_i$, yields:
$$
b_1(L^{k+\ell}) \equiv -y\left( k - \left[\tfrac{k}{\ell}\right]\ell \right) \  (\mathrm{mod} \  \ell); \quad
b_2(L^{k+\ell}) \equiv \ell\left( k - x \right) \  (\mathrm{mod} \  k)\,.
$$
Using the properties of $x$ and $y$, we deduce $b_1(L^{k+\ell}) =\ell -1$ and $b_2(L^{k+\ell}) = k-1$. Injecting these values in \eqref{dimh1} yields $\dim_\CC H^1(\CP(\ell, k), \Oo(L^{k+\ell}))=1$ as claimed (notice that $\deg L^{k+\ell}=-2$).
\end{proof}

\begin{re}
Following an analogous reasoning one can show, for $j <k+\ell$, that $\dim_\CC H^1\left(\CP(\ell, k), \Oo(L^j)\right)=0$, thus obtaining again \eqref{mu1Dweight}.
\end{re}

\subsection{Application to the Faddeev-Skyrme model} The first positive eigenvalue of the $\cu$ operator and its eigenfields are related not only to the $L^2$-energy minimization of vector fields in their $\SD$-orbit but also to the minimizers of the symplectic Dirichlet energy (aka
$\sigma_2$-energy) of mappings with 1-dimensional fibers. This is a manifestation of the duality described in~\cite{slo}. Let us recall that for a map $\varphi$ defined on a 3-manifold $(M^3, g)$ and taking values in a surface $(N^2, J, h)$ with fundamental 2-form $\Omega(X, Y)=h(JX,Y)$, the \textit{symplectic Dirichlet energy} is defined~\cite{sve, svee} as $\mathcal{F}(\varphi)=\frac{1}{2}\norm{\varphi^* \Omega}_{L^2}^2$. Assuming $M$ is a compact, connected, oriented 3-manifold with $H^2(M, \ZZ)=0$, the following topological lower bound can be established \cite{spe, svee}:
\begin{equation}\label{lowboundF}
\mathcal{F}(\varphi) \geq \tfrac{\mathrm{Area}(N)^2}{2}\mu_1 \, Q(\varphi).
\end{equation}
where $\mu_1$ is the first positive eigenvalue of the $\cu$ operator and $Q(\varphi)$ is the Hopf invariant which is constant in each homotopy class of mappings. Finding (local) minimizers of this energy in each homotopy class when  $M=\Ss^3$ (or $\RR^3$ with asymptotic conditions) and $N=\CC P^1 \cong \Ss^2(\frac{1}{2})$ is relevant for the (strongly coupled) Faddeev-Skyrme model in theoretical physics~\cite{fad}; in this case we have $Q(\varphi) \in \ZZ$.

Here we extend this discussion to mappings $\varphi: \Ss^3 \to \CC P^1(\ell, k)$ defined on the 3-sphere and taking values into the 2-orbifold (topological sphere) with two conical singularities of angles $2\pi/k$ and $2\pi/\ell$, called the weighted projective space. We endow the domain with the weighted Sasakian structure of weights $(k, \ell)$, as above, and the codomain with the induced K\"ahler orbifold structure with K\"ahler/area 2-form $\Omega_w$ ~\cite{BGalbook, DGau}. Let $\ov{\Omega}=\frac{1}{\mathrm{Area}(\CC P^1(\ell, k))} \Omega_w$. Since $H^2(\Ss^3, \ZZ)$ is trivial, the closed 2-form $\varphi^* \ov{\Omega}$ must be exact, so $\varphi^* \ov{\Omega} =\dif A$ for some 1-form $A$ that may be further supposed to be coexact. We define the Hopf invariant with the usual formula $Q(\varphi)=\big( A, \ast \dif A \big)_{L^2}$ which produces a homotopy invariant.

We now consider the natural projection to the quotient space $\pi_w: (\Ss^3, g_w) \to (\CC P^1(\ell, k), h_w)$, $\pi_w(z_1, z_2) = [z_1, z_2]_w$ which is, by construction \cite{BGalbook}, an \textit{orbifold Riemannian submersion} with geodesic fibres tangent to $\xi_w$, fitting into the general Boothby-Wang construction (see Appendix). Let us evaluate the terms involved in \eqref{lowboundF} for this particular mapping\footnote{By composing $\pi_w$ with the natural holomorphic map $f:\CC P^1(\ell, k) \to \CC P^1$, $f([z_1, z_2]_w)=[z_1^k, z_2^\ell]$ of degree 1, we obtain a smooth mapping $\varphi_{k, \ell}:= f \circ \pi_w: \Ss^3 \to \CC P^1$ with smooth codomain, and having the same Hopf invariant. Nevertheless we have to import on $\CC P^1$ the induced orbifold structure (see \cite{abr} for details) and the weighted projective metric in order to maintain the minimization property that we prove here.}.
First, the energy can be directly computed as $\mathcal{F}(\pi_w)=
\tfrac{1}{2}\int_{\Ss^3}\abs{\pi_w^* \Omega_{w}}^2\upsilon_{g_w}=
\tfrac{1}{2}\Vol(\Ss^3_w)=\frac{\pi^2}{k\ell}$, where we used $\pi_w^* \Omega_w = -\frac{1}{2}\dif \eta_w$. The latter property together with the formula
$\mathrm{Area}(\CC P^1(\ell, k))=\frac{\pi}{k\ell}$, c.f.~\cite{DGau} (or deduced from the co-area formula), allow us to compute the Hopf invariant and to obtain $Q(\pi_w)=k\ell$. Finally, since we have proved that $\mu_1=2$, we can now see that our map $\pi_w$ realizes the equality in Eq.~\eqref{lowboundF}. Accordingly, we have established the following:
\begin{pr}\label{fadd}
For any positive coprime integers $k, \ell$, the natural projection $\pi_w:\Ss_w^3 \to \CC P^1(\ell, k)$ has the lowest symplectic Dirichlet energy in its homotopy class.
\end{pr}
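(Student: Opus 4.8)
The plan is to show that the map $\pi_w$ saturates the topological lower bound \eqref{lowboundF}, which automatically forces it to be an absolute minimizer of $\mathcal{F}$ in its homotopy class. So the entire argument reduces to evaluating the three quantities $\mathcal{F}(\pi_w)$, $\mathrm{Area}(\CC P^1(\ell,k))$ and $Q(\pi_w)$, and then checking that equality holds in \eqref{lowboundF} once we substitute $\mu_1=2$ from Theorem~\ref{thweight}.

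First I would record the Boothby--Wang relation $\pi_w^*\Omega_w=-\tfrac12\dif\eta_w$ coming from the construction of the weighted Sasakian structure (see the Appendix and \cite{BGalbook}). Squaring and integrating against $\upsilon_{g_w}=\tfrac12\eta_w\wedge\dif\eta_w$ gives $\mathcal{F}(\pi_w)=\tfrac12\Vol(\Ss^3_w)$; a direct computation from \eqref{gw} (equivalently, the co-area formula for the Riemannian orbifold submersion $\pi_w$) yields $\Vol(\Ss^3_w)=2\pi^2/(k\ell)$ and $\mathrm{Area}(\CC P^1(\ell,k))=\pi/(k\ell)$, the latter also recorded in \cite{DGau}. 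Hence $\mathcal{F}(\pi_w)=\pi^2/(k\ell)$.

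Next I would compute the Hopf invariant directly, exploiting that the relevant primitive is, up to a constant, the contact form itself. Since $H^2(\Ss^3,\ZZ)=0$, the closed $2$-form $\pi_w^*\ov\Omega=\tfrac{1}{\mathrm{Area}(\CC P^1(\ell,k))}\pi_w^*\Omega_w=-\tfrac{k\ell}{2\pi}\dif\eta_w$ is exact with primitive $A=-\tfrac{k\ell}{2\pi}\eta_w$, and this $A$ is automatically coexact, because $\eta_w$ is coclosed on a Sasakian manifold and $\Ss^3$ carries no nonzero harmonic $1$-forms. Using $\ast\dif\eta_w=2\eta_w$ one then gets $Q(\pi_w)=(A,\ast\dif A)_{L^2}=\int_{\Ss^3}A\wedge\dif A=\tfrac{(k\ell)^2}{4\pi^2}\int_{\Ss^3}\eta_w\wedge\dif\eta_w=\tfrac{(k\ell)^2}{2\pi^2}\Vol(\Ss^3_w)=k\ell$.

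Finally I would substitute into \eqref{lowboundF}: with $\mu_1=2$ (Theorem~\ref{thweight}), the right-hand side of \eqref{lowboundF} evaluated on any map homotopic to $\pi_w$ equals $\tfrac12\big(\tfrac{\pi}{k\ell}\big)^2\cdot 2\cdot k\ell=\tfrac{\pi^2}{k\ell}=\mathcal{F}(\pi_w)$, so $\pi_w$ attains the bound and is an absolute minimizer in its homotopy class. I expect the only genuine subtlety to be that \eqref{lowboundF} was stated for smooth targets: one must either check that the argument of \cite{spe, svee} goes through verbatim when $N$ is a K\"ahler orbifold --- which it does, since it only uses the spectral estimate for $\cu$ on $M$ together with the identity $Q(\varphi)=(A,\ast\dif A)_{L^2}$ --- or else transfer the statement to the smooth model $\varphi_{k,\ell}=f\circ\pi_w:\Ss^3\to\CC P^1$ of the footnote, which has the same energy and Hopf invariant once $\CC P^1$ is equipped with the pushed-forward orbifold metric. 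Everything else is the routine verification of the three identities above.
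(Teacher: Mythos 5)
Your proposal is correct and follows essentially the same route as the paper: establish $\pi_w^*\Omega_w=-\tfrac12\dif\eta_w$, compute $\mathcal{F}(\pi_w)=\tfrac12\Vol(\Ss^3_w)=\pi^2/(k\ell)$, obtain $Q(\pi_w)=k\ell$ from the coexact primitive proportional to $\eta_w$ together with $\mathrm{Area}(\CC P^1(\ell,k))=\pi/(k\ell)$, and then use $\mu_1=2$ from Theorem~\ref{thweight} to see that $\pi_w$ saturates the topological bound \eqref{lowboundF}. Your explicit verification that $A=-\tfrac{k\ell}{2\pi}\eta_w$ is coexact and your remark on extending \eqref{lowboundF} to orbifold targets (or passing to the smooth model of the footnote) only spell out steps the paper leaves implicit.
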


\section{Appendix: orientation and other conventions}

In this paper we adopt the notations and conventions that we found to be the most common in contact geometry, c.f.~\cite{BGalbook} (notice though that there the differential of a 1-form includes a 1/2 factor, while in our work $\dif\eta(X,Y)=X(\eta(Y))-Y(\eta(X))-\eta([X,Y])$).

If our compact Sasakian 3-manifold $(M, \xi, \eta, \phi, g)$ is quasi-regular, then~\cite[Theorem 7.1.3]{BGalbook} $M$ is a Seifert fibration (principal $\Ss^1$ orbibundle) over a Hodge 2-orbifold $\Sigma_g$ and the contact metric data and the fibration data will be related as follows:
\begin{itemize}
\item[(i)] $\Sigma_g$ can be endowed with a K\"ahler structure $(J, h, \Omega)$,
$\Omega(\cdot, \cdot) := h(J\cdot, \cdot)$, such that the canonical projection $\pi : M \to \Sigma_g$ is a Riemannian submersion, i.e. $g = \pi^* h + \eta \otimes \eta$, and $(\phi, J)$-holomorphic, i.e. $\dif \pi \circ \phi = J \circ \dif \pi$.
In particular, from these conditions and from $\tfrac{1}{2}\dif \eta(\cdot, \cdot) = g(\cdot, \phi \cdot)$ (see Section~\ref{S:intro}), it follows that $\tfrac{1}{2}\dif \eta = - \pi^*\Omega$.

\item[(ii)]  The action of $\Ss^1$ on $M$ on the right is considered positive: $(t, x) \mapsto x \cdot e^{\ii t}$, $t \in \RR/2\pi \ZZ$, $x\in M$ and $-\xi$ is the infinitesimal generator of the $\Ss^1$-action. Therefore $\theta = - \ii \eta$ yields a connection $1$-form $\theta:TM \to \ii \RR$ in the principal $\Ss^1$ orbibundle.
\end{itemize}

We recognize in the above definition (a version of) the generalized \textit{Boothby-Wang  construction}, aka Kaluza-Klein construction\footnote{The unusual minus signs in (i) and (ii) are due to the fact that we wanted to keep the ``asymmetrical'' conventions $\tfrac{1}{2}\dif \eta(\cdot, \cdot) = g(\cdot, \phi \cdot)$, and $\Omega(\cdot, \cdot) = h(J\cdot, \cdot)$ which are commonly used in Sasakian and K\"ahler geometry context, respectively.}. The orientation that we have chosen on $M$ (see Section~\ref{S:intro}) is defined by requiring the Riemannian volume element to be $\frac{1}{2}\eta \wedge \dif \eta$ so that a positive oriented (orthonormal) frame on $M$ is $\{\xi, X_1, -\phi  X_1\}$, while on $\Sigma_g$ a positive frame will be $\{Y_1, JY_1\}$, if $Y_1 = \dif \pi(X_1)$. In other words: orientation on $M$ = reverse fibres orientation $\wedge$ reverse orientation from the base.

\begin{re}[Sign of the Chern number]  With the above conventions, for the complex line bundle $L$ associated to the principal $\Ss^1$ orbibundle $M$ over $\Sigma_g$, we always have $c_1(L) < 0$. (Compare to the similar definition in \cite{bel1} where different conventions lead to $c_1(L)$ always positive.)
Indeed, since the curvature of the fibration is $\dif \theta= -\ii \dif \eta = 2\ii \pi^*\Omega$, we have
$c_1(L)=\frac{\ii}{2\pi}\int_{\sigma_g} 2\ii \Omega = -\frac{1}{\pi}\mathrm{area}(\Sigma_g) <0$. Notice also that, by the co-area formula, as the regular fibres have period (and length) $2\pi$ we also have: $\int_M \eta \wedge \dif \eta =-4\pi^2 c_1(L)$. In particular, $M=\Ss^3$ is the total space of the Hopf fibration over $\Sigma_0=\CC P^1 \cong \Ss^2(\frac{1}{2})$ and the Boothby-Wang construction above (with the standard metric and orientation given by $J$ on the base) corresponds to the standard Sasakian structure \cite[p.210]{BGalbook} with contact form $\eta = \tfrac{\ii}{2}(\ov{z}_1 dz_1 - z_1 d \ov{z}_1 + \ov{z}_2 dz_2 - z_2 d \ov{z}_2)$ and, as $\mathrm{area}(\Sigma_0)=\pi$, we have $c_1(L)= -1$.
\end{re}

We end this section by proving the correspondence between (1,0)-forms $\omega$ (associated to curl eigenfields in $\DD = \ker \eta$) and holomorphic forms on $\Sigma_g$ with values in $L^{-\mu}$. Indeed, a form $\omega \in \Lambda_\DD^{(1,0)}M$ with the property $\mathcal{L}_\xi\omega=-\ii\mu\omega$ must satisfy (via integration along the flow $\varphi_{t}$ of $\xi$)
$$
\varphi_{t}^{*}\omega = \exp(-\ii \mu t)\omega.
$$
This can be read as $R_{a^{-1}}^* \omega = \rho(a)\omega$, with $a=e^{\ii t}$ and $\rho(a)z=a^{-\mu}z$, which, together with $\imath_\xi \omega = 0$, says that $\omega$ is a tensorial form of degree 1 and of type $(\rho, \CC)$ (see e.g. \cite[p.20]{BGalbook}). Therefore, by the general theory of principal and associated bundles, such a form $\omega$ can be seen as a section of  $\Lambda^{(1,0)}\Sigma_g \otimes L^{-\mu}$. Notice that the (1,0) type of $\omega$ was maintained due the holomorphicity of the bundle map $\pi$; for the same reason if $\omega$ is  holomorphic ($\ov \partial \omega = 0$), then the same is true for $\omega$ as section in $K_{\Sigma_g}\otimes L^{-\mu}$, joining the statement in \cite[p. 619]{biq} (see also \cite[Prop. 12.7]{tana}).

\begin{re}[Negative eigenvalues]
Considering in the above construction either

$\bullet$ the negative action of $\Ss^1$ on $M$, or

$\bullet$ $\Omega(\cdot, \cdot) := h(\cdot, J \cdot)=$ K\"ahler/positive area 2-form on the base (i.e. consider the opposite complex structure $J$) and $\xi =$ the infinitesimal generator of the $\Ss^1$-action,
while keeping the other conventions unchanged, will have the following effect: $(1,0)$-forms $\omega$ (associated to curl eigenfields in $\DD = \ker \eta$) will correspond this time to holomorphic forms on $\Sigma_g$ with values in $L^{\mu}$, i.e. elements of $H^1(\Sigma_g, \Oo(L^{-\mu}))$,and the sign of the Chern number will remain the same ($c_1(L) < 0$). So the Riemann-Roch argument can be applied also for negative eigenvalues of the $\cu$ operator.
\end{re}

\end{document}